\begin{document}

\newtheorem{theorem}[subsection]{Theorem}
\newtheorem{proposition}[subsection]{Proposition}
\newtheorem{lemma}[subsection]{Lemma}
\newtheorem{corollary}[subsection]{Corollary}
\newtheorem{conjecture}[subsection]{Conjecture}
\newtheorem{prop}[subsection]{Proposition}
\numberwithin{equation}{section}
\newcommand{\mr}{\ensuremath{\mathbb R}}
\newcommand{\mc}{\ensuremath{\mathbb C}}
\newcommand{\dif}{\mathrm{d}}
\newcommand{\intz}{\mathbb{Z}}
\newcommand{\ratq}{\mathbb{Q}}
\newcommand{\natn}{\mathbb{N}}
\newcommand{\comc}{\mathbb{C}}
\newcommand{\rear}{\mathbb{R}}
\newcommand{\prip}{\mathbb{P}}
\newcommand{\uph}{\mathbb{H}}
\newcommand{\fief}{\mathbb{F}}
\newcommand{\majorarc}{\mathfrak{M}}
\newcommand{\minorarc}{\mathfrak{m}}
\newcommand{\sings}{\mathfrak{S}}
\newcommand{\fA}{\ensuremath{\mathfrak A}}
\newcommand{\mn}{\ensuremath{\mathbb N}}
\newcommand{\mq}{\ensuremath{\mathbb Q}}
\newcommand{\half}{\tfrac{1}{2}}
\newcommand{\f}{f\times \chi}
\newcommand{\summ}{\mathop{{\sum}^{\star}}}
\newcommand{\chiq}{\chi \bmod q}
\newcommand{\chidb}{\chi \bmod db}
\newcommand{\chid}{\chi \bmod d}
\newcommand{\sym}{\text{sym}^2}
\newcommand{\hhalf}{\tfrac{1}{2}}
\newcommand{\sumstar}{\sideset{}{^*}\sum}
\newcommand{\sumprime}{\sideset{}{'}\sum}
\newcommand{\sumprimeprime}{\sideset{}{''}\sum}
\newcommand{\sumflat}{\sideset{}{^\flat}\sum}
\newcommand{\shortmod}{\ensuremath{\negthickspace \negthickspace \negthickspace \pmod}}
\newcommand{\V}{V\left(\frac{nm}{q^2}\right)}
\newcommand{\sumi}{\mathop{{\sum}^{\dagger}}}
\newcommand{\mz}{\ensuremath{\mathbb Z}}
\newcommand{\leg}[2]{\left(\frac{#1}{#2}\right)}
\newcommand{\muK}{\mu_{\omega}}
\newcommand{\thalf}{\tfrac12}
\newcommand{\lp}{\left(}
\newcommand{\rp}{\right)}
\newcommand{\Lam}{\Lambda_{[i]}}
\newcommand{\lam}{\lambda}
\def\L{\fracwithdelims}
\def\om{\omega}
\def\pbar{\overline{\psi}}
\def\phis{\varphi^*}
\def\lam{\lambda}
\def\lbar{\overline{\lambda}}
\newcommand\Sum{\Cal S}
\def\Lam{\Lambda}
\newcommand{\sumtt}{\underset{(d,2)=1}{{\sum}^*}}
\newcommand{\sumt}{\underset{(d,2)=1}{\sum \nolimits^{*}} \widetilde w\left( \frac dX \right) }

\newcommand{\hf}{\tfrac{1}{2}}
\newcommand{\af}{\mathfrak{a}}
\newcommand{\Wf}{\mathcal{W}}

\newcommand{\MT}{\operatorname{MT}}
\newcommand{\ET}{\operatorname{ET}}
\newcommand{\PP}{\operatorname{Prob}}
\newcommand{\sm}{\operatorname{small}}
\newcommand{\lr}{\operatorname{large}}
\newcommand{\LCM}{\operatorname{LCM}}
\newcommand{\tp}{\operatorname{top}}
\newcommand{\spn}{\operatorname{span}}
\newcommand{\Vol}{\operatorname{Vol}}
\newcommand{\new}{\operatorname{new}}
\newcommand{\old}{\operatorname{old}}
\newcommand{\GCD}{\operatorname{GCD}}

\newcommand{\bC}{\mathbb{C}}
\newcommand{\bH}{\mathbb{H}}
\newcommand{\bN}{\mathbb{N}}
\newcommand{\bQ}{\mathbb{Q}}
\newcommand{\bR}{\mathbb{R}}
\newcommand{\bS}{\mathbb{S}}
\newcommand{\bT}{\mathbb{T}}
\newcommand{\bU}{\mathbb{U}}
\newcommand{\bZ}{\mathbb{Z}}
\newcommand{\zed}{\mathbb{Z}}

\newcommand{\Fix}{\mathrm{Fix}}
\newcommand{\sgn}{\mathrm{sgn}}
\newcommand{\Res}{\mathrm{Res}}
\newcommand{\rNW}{\mathrm{NW}}
\newcommand{\rKL}{\mathrm{Kl}}
\newcommand{\PSL}{\mathrm{PSL}}
\newcommand{\GL}{\mathrm{GL}}

\newcommand{\E}{\mathbf{E}}
\newcommand{\Var}{\mathbf{Var}}
\newcommand{\one}{\mathbf{1}}

\newcommand{\vlambda}{{\mathbf{\lambda}}}
\newcommand{\vrho}{{\mathbf{\rho}}}
\newcommand{\vdelta}{{\mathbf{\delta}}}
\newcommand{\vmu}{{\mathbf{\mu}}}
\newcommand{\vx}{{\mathbf{x}}}
\newcommand{\vr}{{\mathbf{r}}}
\newcommand{\vz}{{\mathbf{z}}}
\newcommand{\ve}{{\mathbf{e}}}
\newcommand{\vk}{{\mathbf{k}}}

\newcommand{\cC}{{\mathcal{C}}}
\newcommand{\cD}{{\mathcal{D}}}
\newcommand{\cB}{{\mathcal{B}}}
\newcommand{\cF}{{\mathcal{F}}}
\newcommand{\cN}{{\mathcal{N}}}
\newcommand{\cP}{{\mathcal{P}}}
\newcommand{\cI}{{\mathcal{I}}}
\newcommand{\cS}{{\mathcal{S}}}

\newcommand{\fa}{{\mathfrak{a}}}

\newcommand{\sB}{{\mathscr{B}}}
\newcommand{\sC}{{\mathscr{C}}}
\newcommand{\sD}{{\mathscr{D}}}
\newcommand{\sF}{{\mathscr{F}}}
\newcommand{\sG}{{\mathscr{G}}}
\newcommand{\sH}{{\mathscr{H}}}
\newcommand{\sI}{{\mathscr{I}}}
\newcommand{\sK}{{\mathscr{K}}}
\newcommand{\sL}{{\mathscr{L}}}
\newcommand{\sM}{{\mathscr{M}}}
\newcommand{\sN}{{\mathscr{N}}}
\newcommand{\sO}{{\mathscr{O}}}
\newcommand{\sP}{{\mathscr{P}}}
\newcommand{\sS}{{\mathscr{S}}}

\newcommand{\ug}{\underline{g}}
\newcommand{\ul}{\underline{\ell}}
\newcommand{\un}{\underline{n}}
\newcommand{\uw}{\underline{w}}
\newcommand{\ux}{\underline{x}}

\newcommand{\Ts}{\tilde{s}}
\newcommand{\Tt}{\tilde{t}}

\theoremstyle{plain}
\newtheorem{conj}{Conjecture}
\newtheorem{remark}[subsection]{Remark}

\makeatletter
\def\widebreve{\mathpalette\wide@breve}
\def\wide@breve#1#2{\sbox\z@{$#1#2$}%
     \mathop{\vbox{\m@th\ialign{##\crcr
\kern0.08em\brevefill#1{0.8\wd\z@}\crcr\noalign{\nointerlineskip}%
                    $\hss#1#2\hss$\crcr}}}\limits}
\def\brevefill#1#2{$\m@th\sbox\tw@{$#1($}%
  \hss\resizebox{#2}{\wd\tw@}{\rotatebox[origin=c]{90}{\upshape(}}\hss$}
\makeatletter

\title[Twisted fourth moment of Dirichlet $L$-functions to a fixed modulus]{Twisted fourth moment of Dirichlet $L$-functions to a fixed modulus}

\author[P. Gao]{Peng Gao}
\address{School of Mathematical Sciences, Beihang University, Beijing 100191, China}
\email{penggao@buaa.edu.cn}

\author[L. Zhao]{Liangyi Zhao}
\address{School of Mathematics and Statistics, University of New South Wales, Sydney NSW 2052, Australia}
\email{l.zhao@unsw.edu.au}

\begin{abstract}
We evaluate the twisted four moment on the critical line of the family of Dirichlet $L$-functions to a fixed prime power modulus, obtaining an asymptotic formula with a power saving error term. 
\end{abstract}

\maketitle

\noindent {\bf Mathematics Subject Classification (2010)}: 11M06  \newline

\noindent {\bf Keywords}: twisted fourth moment, Dirichlet $L$-functions, critical line

\section{Introduction}
\label{sec 1}

Moments of families of $L$-functions is an important subject of number theory.  A classical paradigm involves the study of moments of the family of Dirichlet $L$-functions at the central point. Let $\chi$  be a primitive Dirichlet character modulo $q$ and we assume throughout the paper that $q \not \equiv 2 \pmod 4$ to ensure the existence of such primitive characters.  It was conjectured in \cite{CFKRS} by J. B. Conrey, D. W. Farmer, J. P. Keating, M. O. Rubinstein and N. C. Snaith that for all positive integral values of $k$ and explicit constants $C_k$,
\begin{align}
\label{moments}
 \sumstar_{\substack{ \chi \shortmod q }}|L(\tfrac{1}{2},\chi)|^{2k} \sim C_k \phis(q)(\log q)^{k^2},
\end{align}
   where $L(s, \chi)$ denotes the $L$-function attached to $\chi$, $\phis(q)$ the number of primitive characters modulo $q$ and $\sum^*$ indicates the sum over primitive Dirichlet characters modulo $q$.  In fact, the relation given in \eqref{moments} is believed (see \cite{R&Sound}) to hold for all real $k  \geq 0$. \newline
   
     A result of A. Selberg \cite{Selberg46} concerning a more general twisted second moment implies the validity of \eqref{moments} for $k=1$. In \cite{HB81}, D. R. Heath-Brown evaluated asymptotically the left-hand side of \eqref{moments} with $k=2$ for all most all $q$, whose result was later extended to hold for all $q$ by K. Soundararajan in \cite{Sound2007}. In \cite{Young2011}, M. P. Young further obtained an asymptotic formula for the left-hand side expression in \eqref{moments} when $k=2$ with a power saving error term for primes $q$. Subsequent improvements on the error term in Young's result can be found in  \cites{BFKMM1,BFKMM,BM15,BFKMMS}. Here we point out that the result in \cite{Young2011} was extended by V. Blomer and D. Mili\'cevi\'c \cite{BM15} to hold for all sufficiently factorable $q$ including $99.9\%$ of all admissible moduli, and was later shown by X. Wu \cite{Wu2020} to be valid for all general moduli $q$. \newline

  Often, in order to study the behaviour of the left-hand side expression of \eqref{moments}, it is natural to consider a more general situation involving with shifts of the central values. To be precise, the shifted moments corresponding to the case $k=2$ is given by
\begin{align*}
 \sumstar_{\substack{ \chi \shortmod q }} L\left(\tfrac{1}{2} + \alpha, \chi \right)L\left(\tfrac{1}{2} + \beta, \chi \right)L\left(\tfrac{1}{2} + \gamma, \overline{\chi} \right)L\left(\tfrac{1}{2} + \delta, \overline{\chi} \right),
\end{align*}
  where $\alpha$, $\beta$, $\gamma$ and $\delta$ are complex numbers. \newline
 
One of the advantages of considering these shifted moments is that, with these the various shifts, contributions from higher order poles to the main term in the eventual asymptotic formula become contributions from simple poles, thus demystifying further the structure of those main terms.  Moreover, as shown in \cites{Munsch17, Szab}, the shifted moments have significant applications towards establishing bounds for moments of the corresponding character sums. \newline

  Other than investigating shifted moments, one may consider more generally the twisted shifted moments. For example, when $k=2$,  the twisted shifted fourth moment is given by
\begin{align}
\label{twistedshiftedfourthmoments}
 \sumstar_{\substack{ \chi \shortmod q }} L\left(\tfrac{1}{2} + \alpha, \chi \right)L\left(\tfrac{1}{2} + \beta, \chi \right)L\left(\tfrac{1}{2} + \gamma, \overline{\chi} \right)L\left(\tfrac{1}{2} + \delta, \overline{\chi} \right)\chi(a)\overline{\chi}(b),
\end{align}
with $a, b \in \intz$. \newline
 
 For primes $q$,  an asymptotic evaluation on \eqref{twistedshiftedfourthmoments} was first done by B. Hough \cite[Theorem 4]{Hough2016} (the proof of which can be found in the preprint of the paper {\tt arXiv:1304.1241v3}) for square-free $a$, $b$. The result was extended by R. Zacharias \cite{Z2019} for cube-free $a$, $b$, and by D. Liu \cite[Theorem 6.2]{Liu24} for general $a$, $b$. \newline 

The importance of studying the shifted moments partly lies in their applications to evaluations on mollified moments, which in turn can be applied for instance to establish sharp bounds on the correspond moments in certain ranges, using the upper bound principle developed by  M. Radziwi{\l\l} and K. Soundararajan in \cite{Radziwill&Sound}.  In particular, as pointed out in \cite{BPRZ}, one may apply the work in \cite{Hough2016} or  \cite{Z2019} to obtain sharp upper bounds on all moments at the central point of the family of Dirichlet $L$-functions under consideration below the fourth. This is later achieved in \cite{G&Zhao25-04}, utilizing most crucially the results in \cite{Hough2016} and \cite{Liu24}. \newline

Our aim in this paper is to evaluate asymptotically the expression in \eqref{twistedshiftedfourthmoments} for prime powers $q$. To state our result, we define 
for complex numbers $\alpha$, $\beta$ and any positive integer $n$, 
\begin{align}
\label{sigmadef}
 \sigma_\alpha (n)=\sum_{d\mid n}d^{\alpha}  \quad \mbox{and} \quad \sigma_{\alpha,\beta}(n)=\sum_{ad=n}a^{-\alpha} d^{-\beta}. 
\end{align}
Note that 
\begin{align}
\label{sigmarel}
\sigma_{\alpha,\beta}(n)=n^{-\alpha}\sigma_{\alpha-\beta}(n). 
\end{align}
   We also define for any positive integer $\ell$ and any complex numbers $\alpha, \beta, \gamma, \delta$, the function
\begin{align}
\label{taudef}
 \tau_{\alpha, \beta, \gamma, \delta}(\ell) = \prod_{\substack{ p| \ell \\ p^{\nu}\|\ell}}\left(1 + \frac{p^{\gamma-\delta}(p^{(\gamma-\delta)\nu}-1) \zeta_p(2 + \alpha + \beta + \gamma + \delta)}{(p^{\gamma-\delta}-1)\zeta_p(1 +\alpha + \gamma)\zeta_p(1 + \beta + \gamma)}\right),
\end{align}
  where $\zeta_p(s) = (1-p^{-s})^{-1}$. Here and throughout the paper, we reserve the letter $p$ for a prime number. We also write $L^{(n)}(s,\chi)$ the Euler product of $L(s,\chi)$ with the primes dividing $n$ removed. In particular, this applies to the Riemann zeta function $\zeta(s)$ as well. \newline

We shall focuse on the case of even characters (i.e. those characters $\chi$ with $\chi(-1)=1$) as the case of odd characters is similar. For this, we define
\begin{align} \label{Sdef}
\mathcal{S}(\alpha, \beta, \gamma, \delta; a,b) := 
 \mathop{{\sum}^+}_{\chi \shortmod q}  L\left(\tfrac{1}{2} + \alpha, \chi \right)L\left(\tfrac{1}{2} + \beta, \chi \right)L\left(\tfrac{1}{2} + \gamma, \overline{\chi} \right)L\left(\tfrac{1}{2} + \delta, \overline{\chi} \right)  \chi(a)\overline{\chi}(b), 
\end{align}
where henceforth the symbol $\sum^+$ indicates that the summation is over all primitive even characters.  \newline

  Our main result evaluates $\mathcal{S}(\alpha, \beta, \gamma, \delta; a,b)$ asymptotically for a prime power modulus $q$, as follows.
\begin{theorem}
\label{twisted_fourth_moment_theorem}
  With the notation as above, suppose $q=q^{n_0}_0$ with $n_0 \geq 50$ and $q_0$ is an odd prime. Let $a, b$ be positive integers such that $(a, b) = (ab, q)=1$. For a set of indices $S$, set
\begin{align}
\label{eqX}
X_S = \prod_{u \in S} X_u, \quad \mbox{where} \quad X_u = \left(\frac{q}{\pi}\right)^{-u}\frac{\Gamma\left(\frac{\frac{1}{2}-u}{2}\right)}{\Gamma\left(\frac{\frac{1}{2}+u}{2}\right)}. 
\end{align}
 Suppose that
\begin{align}
\label{scondition}
\begin{split}
 (1+|\alpha|)^4(1+|\beta|)^4(1+|\gamma|)^4 (1+|\delta|)^4 (ab)^{7} \ll 
  q^{\min(1/576, 1/n_0)-\varepsilon_0} \text{ for some } 0<\varepsilon_0< \min\Big(\frac 1{576}, \frac 1{n_0}\Big).  
\end{split}
\end{align}

  Then there exists $\eta > 0$ such that for $\alpha, \beta, \gamma, \delta \in \left\{z \in \bC: \Re(z) < \eta/\log q \right\}$ with $\Im
(\beta-\alpha)=O(1)$, $\Im(\delta-\gamma)=O(1)$ and that $c\pm d \neq 0$ for any $c, d \in \{\alpha, \beta, \gamma, \delta  \}$, we have, for any $\varepsilon>0$, 
\begin{align} 
\label{4thmomenteval}
\begin{split}
  \mathcal{S}(\alpha, \beta, \gamma, \delta; a,b) =& \sum^6_{j=1}S_j   +O(q^{\varepsilon} (1+|\alpha|)^4(1+|\beta|)^4(1+|\gamma|)^4 (1+|\delta|)^4 (ab)^{7}(q^{1-1/576}+q^{1-1/n_0})). 
\end{split}
\end{align}
  where
\begin{align} 
\label{Sdef}
\begin{split}
  S_1=& \frac {\phis(q)}{2}\frac{\tau_{ \alpha, \beta, \gamma, \delta}(a)\tau_{ \gamma, \delta, \alpha, \beta}(b)}{a^{1/2 + \gamma} b^{1/2+\alpha}}\frac{\zeta(1 +\alpha + \gamma)\zeta(1 + \alpha + \delta) \zeta(1 + \beta + \gamma) \zeta(1 + \beta + \delta)}{\zeta(2 +\alpha + \beta +\gamma+\delta)}, \\
S_2=&  \frac {\phis(q)}{2}X_{\alpha,\gamma}\frac{\tau_{-\gamma, \beta, -\alpha, \delta}(a) \tau_{-\alpha, \delta, -\gamma, \beta}(b)}{a^{1/2 -\alpha} b^{1/2 -\gamma}} 
 \frac{\zeta(1-\alpha -\gamma) \zeta(1-\gamma +\delta) \zeta(1 -\alpha + \beta) \zeta(1+\beta + \delta)}{\zeta(2-\alpha + \beta -\gamma +\delta)}, \\ 
S_3= & \frac {\phis(q)}{2}X_{\beta,\gamma}\frac{\tau_{\alpha, -\gamma, -\beta, \delta}(a) \tau_{-\beta, \delta, \alpha, -\gamma}(b)}{a^{1/2 -\beta} b^{1/2 + \alpha}} 
 \frac{\zeta(1+\alpha - \beta) \zeta(1+\alpha +\delta) \zeta(1-\beta-\gamma) \zeta(1-\gamma + \delta)}{\zeta(2+\alpha - \beta -\gamma +\delta)}, \\
 S_4= &  \frac {\phis(q)}{2}X_{\alpha,\delta}\frac{\tau_{-\delta, \beta, \gamma, -\alpha}(a) \tau_{\gamma, -\alpha, -\delta, \beta}(b)}{a^{1/2 +\gamma} b^{1/2 - \delta}}  
  \frac{\zeta(1+\gamma-\delta) \zeta(1-\alpha -\delta) \zeta(1 + \beta + \gamma) \zeta(1-\alpha+\beta)}{\zeta(2-\alpha + \beta +\gamma -\delta)},\\
S_5= &  \frac {\phis(q)}{2}X_{\beta,\delta}\frac{\tau_{\alpha, -\delta, \gamma, -\beta}(a) \tau_{\gamma, -\beta, \alpha, -\delta}(b)}{a^{1/2 +\gamma} b^{1/2 + \alpha}} 
   \frac{\zeta(1+\alpha +\gamma) \zeta(1+\alpha-\beta) \zeta(1 +\gamma - \delta) \zeta(1-\beta - \delta)}{\zeta(2+\alpha - \beta +\gamma -\delta)}, \quad \mbox{and} \\
 S_6= &  \frac {\phis(q)}{2}X_{\alpha, \beta, \gamma, \delta}\frac{\tau_{ -\gamma, -\delta, -\alpha, -\beta}(a)\tau_{ -\alpha, -\beta, -\gamma, -\delta }(b)}{a^{1/2 -\alpha} b^{1/2-\gamma}}
     \frac{\zeta(1 -\alpha - \gamma)\zeta(1 - \beta - \gamma) \zeta(1 - \alpha - \delta) \zeta(1 - \beta - \delta)}{\zeta(2 -\alpha - \beta -\gamma-\delta)}.
\end{split}
\end{align}
\end{theorem}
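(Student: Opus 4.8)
The plan is to adapt the standard fourth-moment machinery (Heath--Brown, Young, Hough) to the prime-power modulus $q=q_0^{n_0}$, proceeding in four stages: an approximate functional equation, orthogonality of even primitive characters, extraction of the diagonal, and the off-diagonal shifted-divisor analysis, followed at the end by analytic continuation to the stated range of the shifts. For the first step, one uses that for even $\chi$ the functional equation reads $L(s,\chi)=\varepsilon_\chi\,(q/\pi)^{1/2-s}\frac{\Gamma((1-s)/2)}{\Gamma(s/2)}L(1-s,\overline{\chi})$ with $\varepsilon_\chi=\tau(\chi)/\sqrt q$. Applying it to the pair $L(\tfrac12+\alpha,\chi)L(\tfrac12+\beta,\chi)$ and, separately, to $L(\tfrac12+\gamma,\overline{\chi})L(\tfrac12+\delta,\overline{\chi})$, and using $\varepsilon_\chi\varepsilon_{\overline{\chi}}=\chi(-1)=1$, one obtains a balanced approximate functional equation expressing the product of four $L$-functions times $\chi(a)\overline{\chi}(b)$ as a \emph{main sum} $\sum_{m,n}\sigma_{\alpha,\beta}(m)\sigma_{\gamma,\delta}(n)(mn)^{-1/2}\chi(am)\overline{\chi}(bn)$, with $m$ and $n$ each localized near $q$ by smooth weights, plus a \emph{dual sum} of the same shape with $(\alpha,\beta,\gamma,\delta)$ replaced by $(-\gamma,-\delta,-\alpha,-\beta)$ and carrying the deterministic factor $X_{\alpha,\beta,\gamma,\delta}$. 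Summing over even primitive $\chi\bmod q$ and using that $q$ is a prime power, primitivity costs only one subtracted sum over even characters modulo $q/q_0$; orthogonality then converts the character sum into the congruence condition $am\equiv\pm bn$ modulo $q$ (resp. $q/q_0$).

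\textbf{The diagonal.} The terms with $am=bn$ are isolated first; since $(a,b)=1$ we write $m=bk$, $n=ak$, so the diagonal of the main sum is $(ab)^{-1/2}\sum_k\sigma_{\alpha,\beta}(bk)\sigma_{\gamma,\delta}(ak)k^{-1}W(abk^2/q^2)$ for a smooth $W$. Replacing $W$ by its Mellin transform and evaluating the Dirichlet series $\sum_k\sigma_{\alpha,\beta}(bk)\sigma_{\gamma,\delta}(ak)k^{-s}$ by a direct Euler-product computation (multiplicative in $k$, with the local factors modified at primes dividing $ab$) produces $\zeta(s+1+\alpha+\gamma)\zeta(s+1+\alpha+\delta)\zeta(s+1+\beta+\gamma)\zeta(s+1+\beta+\delta)/\zeta(2s+2+\alpha+\beta+\gamma+\delta)$ together with an arithmetic factor in $a,b$; moving the $s$-contour past the pole at $s=0$ and normalizing via \eqref{sigmarel}, the residue is exactly $S_1$ (with its $\tau_{\alpha,\beta,\gamma,\delta}(a)\tau_{\gamma,\delta,\alpha,\beta}(b)$ weight), while the shifted integral is holomorphic and goes into the error term. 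The diagonal of the dual sum gives $S_6$ by the identical computation with the substituted shifts and the factor $X_{\alpha,\beta,\gamma,\delta}$.

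\textbf{The off-diagonal.} The terms with $am\equiv\pm bn\pmod{q}$ (and modulo $q/q_0$) but $am\ne bn$ carry the real content. Writing $am\mp bn=qr$ with $r\ne0$, opening one divisor function and applying Voronoi/Poisson summation in the remaining variable turns each such term into a sum, over $r$ and over a divisor of $q_0^{n_0}$, of Kloosterman-type exponential sums weighted by an Estermann zeta function and smooth archimedean integrals. Shifting the $s$-contour of the Estermann function across its poles (at $s=1-\alpha$ and $s=1-\beta$ for the main sum, and the analogous points for the dual sum) isolates the polar part; reassembling the $r$-sum and the archimedean integral, these residues are identified with the single-swap terms $S_2,S_3,S_4,S_5$, each emerging with its $X$-factor ($X_{\alpha,\gamma}$, $X_{\beta,\gamma}$, $X_{\alpha,\delta}$, $X_{\beta,\delta}$) and the matching $\tau$-factors in $a$ and $b$, while the remaining non-polar integrals form the error.

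\textbf{Error estimate and conclusion.} For the error one needs strong bounds for the exponential sums to modulus $q_0^{n_0}$: since the modulus is a prime power one uses the $p$-adic method of stationary phase (Cochrane--Zheng type evaluations) rather than merely the Weil bound, separating the degenerate ranges where the summation variables and $r$ are divisible by powers of $q_0$. The hypothesis $n_0\ge 50$ and the term $q^{1-1/n_0}$ in \eqref{4thmomenteval} reflect the limited saving available when the prime-power exponent is large, the term $q^{1-1/576}$ is the generic saving from the Voronoi/Poisson steps, and the factor $(ab)^7$ records the cost of the twist; condition \eqref{scondition} is exactly what makes this error smaller than the main terms, which are of size $\asymp\phis(q)\asymp q$. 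Finally, the asymptotic is first proved under the extra assumption that $\Re\alpha,\ldots,\Re\delta$ exceed a small multiple of $1/\log q$, where the zeta factors in $S_1,\ldots,S_6$ and the Gamma factors in the $X_u$ of \eqref{eqX} are regular along all relevant contours; both sides are holomorphic in $(\alpha,\beta,\gamma,\delta)$ on the polydisc $\Re(z)<\eta/\log q$, so the restriction is removed by analytic continuation, the hypotheses $\Im(\beta-\alpha)=O(1)$, $\Im(\delta-\gamma)=O(1)$ and $c\pm d\ne 0$ guaranteeing that the main terms stay regular there. The crux of the whole argument is the last two steps together: bookkeeping the six recipe main terms through the Voronoi/Estermann contour shifts with all four shifts present, so the $\tau$- and $X$-factors come out precisely as stated, and simultaneously extracting a genuine power saving, which forces a careful treatment of the many degenerate cases in the exponential sums modulo $q_0^{n_0}$ and tight control of the polynomial dependence on $a,b$ and on $\alpha,\beta,\gamma,\delta$.
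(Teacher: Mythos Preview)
Your broad architecture (approximate functional equation, orthogonality, diagonal giving $S_1$ and $S_6$, off-diagonal producing $S_2,\dots,S_5$ plus an error) matches the paper, but your off-diagonal treatment is genuinely different from what the paper does, and your explanation of the exponent $1/576$ is incorrect.

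The paper does \emph{not} follow the Young/Hough Estermann-function route you sketch. Instead it splits the off-diagonal by the relative sizes of the localized variables $M\asymp m$, $N\asymp n$. When $M,N$ are far apart (say $N\ge Mq^{1-2\eta_1}$), it applies Voronoi to the long variable and then invokes the Blomer--Mili\'cevi\'c large sieve for Kloosterman sums to prime-power moduli (their Theorem~5, stated here as Lemma~\ref{largesieveKS}); this is where the $q^{1-1/n_0}$ term and the auxiliary divisor $q_1\mid q$ enter. When $M,N$ are close, the paper uses the Duke--Friedlander--Iwaniec $\delta$-method, applies Voronoi to \emph{both} $m$ and $n$, and then decomposes into a ``main'' piece $S^0_{\pm,M,N}$ (the zero-frequency terms after double Voronoi) and an error $E_{\pm,M,N}$. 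The secondary main terms $S_2,\dots,S_5$ come from summing $S^0_{\pm,M,N}$ over the dyadic pieces and collapsing the partition of unity via Mellin inversion, following Liu's computation; this is not the same mechanism as reading off Estermann poles, though the outcomes agree. The error $E_{\pm,M,N}$ is bounded by the Kuznetsov trace formula and the spectral large sieve, in the style of Zacharias.

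This last point is where your account goes wrong. The saving $1/576$ is \emph{not} ``the generic saving from the Voronoi/Poisson steps'' nor from $p$-adic stationary phase. It comes from the spectral side: with $\eta_1=1/9$ and the Kim--Sarnak bound $\theta=7/64$ toward Ramanujan for Maa{\ss} forms, the dominant error exponent is $1+\theta-\eta_1=1-1/576$. If you pursue a purely Estermann/elementary Kloosterman route you will not naturally arrive at this exponent, and without the $M,N$-splitting you also cannot simultaneously invoke the Blomer--Mili\'cevi\'c input that handles the unbalanced range. Your $p$-adic stationary phase idea is morally what underlies Blomer--Mili\'cevi\'c, but here it is consumed as a black box rather than redeveloped; the paper's own contribution on the error side is the Kuznetsov/spectral treatment of the balanced range.
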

 
  Note that the main term in \eqref{4thmomenteval} is consistent with the conjectured formula given in \cite{CFKRS}.  Our proof of Theorem \ref{twisted_fourth_moment_theorem} heavily makes use of ideas from 
\cites{BFKMM, Liu24, BM15, Z2019} and most importantly those from \cite{BM15}. To be more precise, we apply the approximate functional equation for products of $L$-functions (c.f. Lemma \ref{lemafe}) to represent $\mathcal{S}(\alpha, \beta, \gamma, \delta; a,b)$  as convergent series over $m, n$. A main term arises from the diagonal contribution $ma = nb$ and is treated in Section \ref{sec: mainterm}. For the remaining sum, we apply partitions of unity to localize the sums over $m, n$ to $m \asymp M, n \asymp N$. Without loss of generality, we may assume that $N \geq M$. Then we treat the sums depending on the relative sizes of $M$ and $N$. When $M$ and $N$ are far apart, then we use crucially the ideas from \cite{BM15}, applying the Voronoi summation formula (see Lemma \ref{Voronoi} below) to the longer sum over $n$ to convert it to a shorter sum and then apply \cite[Theorem 5]{BM15} (see Lemma \ref{largesieveKS} below) to control the size of divisor-like sums in arithmetic progressions to show that the sums in this case only give rise to an error term.  Now, for $M$ and $N$ in close proximity, we follow the treatments in \cites{Liu24, Z2019} to use the $\delta$-method introduced by W. Duke, J. B. Friedlander and H. Iwaniec in \cite{DFI94} and then apply the Voronoi summation formula to both sums over $m$ and $n$ to identify a secondary main term together with other terms that will contribute to an error term.  The computation of the secondary main term follows from those given in \cite[Section 6.4.3]{Liu24} while the estimation on the remaining terms follows closely the proof of \cite[Theorem 4.2]{Z2019} by employing spectral theory of automorphic forms, in particular the Kuznetsov trace formula (see Lemma \ref{Kuznetsov} below).

\section{Preliminaries}
\label{sec 2}

\subsection{Various relations}
Let $\mu$ denote the M\"obius function and $\varphi$ the Euler totient function. The following orthogonality formula (see \cite{Sound2007}) evaluates sums over even Dirichlet characters.
\begin{lemma}
\label{lemof}
For $(mn,q)=1$, we have
\begin{align}
\label{lemof2}
\mathop{{\sum}^+}_{\chi \shortmod q}\chi(m)\overline{\chi}(n) =\frac12\left(\sum_{d\mid(q,m-n)}\varphi(d)\mu\left(\frac qd\right) +\sum_{d\mid(q,m+n)}\varphi(d)\mu\left(\frac qd \right)\right).
\end{align}
\end{lemma}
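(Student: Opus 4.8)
The plan is to obtain \eqref{lemof2} from the classical orthogonality relation over \emph{all} Dirichlet characters modulo $q$, by first isolating the even characters and then the primitive ones.

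Since $(mn,q)=1$, every character value appearing below is nonzero, and $\tfrac12\bigl(1+\chi(-1)\bigr)$ is the indicator of $\chi$ being even. Because $\chi(-1)\chi(m)\overline\chi(n)=\chi(-m)\overline\chi(n)$, multiplying by this indicator and summing over all primitive characters modulo $q$ gives
\[
\mathop{{\sum}^+}_{\chi \shortmod q}\chi(m)\overline\chi(n)
=\tfrac12\Bigl(\sumstar_{\chi \shortmod q}\chi(m)\overline\chi(n)+\sumstar_{\chi \shortmod q}\chi(-m)\overline\chi(n)\Bigr).
\]
Hence it suffices to prove the standard identity
\[
\sumstar_{\chi \shortmod q}\chi(m)\overline\chi(n)=\sum_{d\mid(q,\,m-n)}\varphi(d)\,\mu\!\left(\tfrac qd\right),
\]
and then to apply it once as stated and once with $m$ replaced by $-m$, using $(q,-m-n)=(q,m+n)$, to recover \eqref{lemof2}.

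For the displayed primitive-character identity I would argue by M\"obius inversion over the conductor. Every Dirichlet character modulo $q$ is induced by a unique primitive character of some conductor $f\mid q$, and for $(m,q)=1$ its value at $m$ agrees with that of the inducing primitive character; therefore, writing $g(f):=\sumstar_{\psi \shortmod f}\psi(m)\overline\psi(n)$, we have $\sum_{f\mid q}g(f)=\sum_{\chi \shortmod q}\chi(m)\overline\chi(n)$. By full orthogonality the right-hand side equals $\varphi(q)$ if $q\mid m-n$ and $0$ otherwise (again using $(mn,q)=1$), i.e.\ it is $\varphi(q)\,\mathbf{1}_{q\mid m-n}$. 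M\"obius inversion then gives
\[
g(q)=\sum_{f\mid q}\mu\!\left(\tfrac qf\right)\varphi(f)\,\mathbf{1}_{f\mid m-n}=\sum_{d\mid(q,\,m-n)}\varphi(d)\,\mu\!\left(\tfrac qd\right),
\]
which is the claimed identity. Substituting back into the first display and combining the two contributions yields \eqref{lemof2}.

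This argument is entirely elementary; the only points needing care are the coprimality hypothesis $(mn,q)=1$, which guarantees the character values are nonzero and makes the passage between characters modulo $q$ and their inducing primitive characters unambiguous, and the standing convention $q\not\equiv 2\pmod 4$, which ensures there is at least one primitive character modulo $q$ (and is in any case satisfied here since $q=q_0^{n_0}$ with $q_0$ an odd prime). There is no analytic obstacle.
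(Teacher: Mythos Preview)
Your argument is correct and is precisely the standard derivation of this orthogonality relation; the paper itself does not give a proof but simply cites \cite{Sound2007} for the formula, so there is nothing further to compare.
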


  Recall the definition of $\sigma_{\alpha,\beta}(n)$ in \eqref{sigmadef}. This next lemma gives some multiplicative identities for this arithmetic function.
\begin{lemma}
\label{sigmarelation}
 For any $\alpha, \beta \in \mc$ and any positive integers $m,n$, we have
\begin{align}
\label{sigmaprodrelation}
 \sigma_{\alpha,\beta}(n)\sigma_{\alpha,\beta}(m)= &\sum_{d|(n,m)}d^{-\alpha-\beta}\sigma_{\alpha,\beta}\Big( \frac {nm}{d^2} \Big) \quad \mbox{and} \\
\label{inversesigmaprodrelation}
  \sigma_{\alpha,\beta}(nm)=& \sum_{d|(n,m)}\mu(d)d^{-\alpha-\beta}\sigma_{\alpha,\beta}\Big(\frac {n}{d}\Big)\sigma_{\alpha,\beta}\Big(\frac {m}{d}\Big).
\end{align}
\end{lemma}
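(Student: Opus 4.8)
\textbf{Proof proposal for Lemma \ref{sigmarelation}.}

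Both identities are multiplicative in the obvious sense: the arithmetic functions on each side are multiplicative in the pair $(n,m)$ (they factor over the primes dividing $nm$), so it suffices to verify each identity when $n=p^a$ and $m=p^b$ for a prime $p$ and nonnegative integers $a,b$; without loss of generality $a\le b$. The plan is to reduce everything to the elementary generating-function identity for $\sigma_{\alpha,\beta}$ at prime powers, namely
\begin{align*}
\sigma_{\alpha,\beta}(p^k)=\sum_{i+j=k}p^{-\alpha i}p^{-\beta j}=\frac{p^{-\alpha(k+1)}-p^{-\beta(k+1)}}{p^{-\alpha}-p^{-\beta}},
\end{align*}
the last expression being understood by continuity when $\alpha=\beta$. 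Equivalently, one may keep $\sigma_{\alpha,\beta}(p^k)$ in the summed form and work directly with it.

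For \eqref{sigmaprodrelation}, the right-hand side at $(p^a,p^b)$ is $\sum_{d=0}^{a}p^{(-\alpha-\beta)d}\sigma_{\alpha,\beta}(p^{a+b-2d})$. Writing each $\sigma_{\alpha,\beta}(p^{a+b-2d})=\sum_{i+j=a+b-2d}p^{-\alpha i-\beta j}$ and reindexing by $I=i+d$, $J=j+d$ (so $I+J=a+b$, with $d$ ranging over $0\le d\le\min(I,J,a)=\min(I,J)$ since $a\le b$ forces $\min(I,J)\le a$ in the relevant range) turns the double sum into $\sum_{I+J=a+b}\big(\sum_{d}1\big)p^{-\alpha I-\beta J}$ after absorbing the factor $p^{(-\alpha-\beta)d}\cdot p^{-\alpha(I-d)-\beta(J-d)}=p^{-\alpha I-\beta J}$; the inner count $\sum_d 1$ equals $\min(I,J)+1$, which is exactly the number of ways to write $p^I=p^{i_1}p^{i_2}$ with $i_1\le a$... more cleanly: the left-hand side $\sigma_{\alpha,\beta}(p^a)\sigma_{\alpha,\beta}(p^b)=\big(\sum_{i_1+j_1=a}p^{-\alpha i_1-\beta j_1}\big)\big(\sum_{i_2+j_2=b}p^{-\alpha i_2-\beta j_2}\big)=\sum_{I+J=a+b}c_{I,J}\,p^{-\alpha I-\beta J}$ where $c_{I,J}$ counts pairs $(i_1,i_2)$ with $i_1\le a$, $i_2\le b$, $i_1+i_2=I$, i.e.\ $c_{I,J}=\min(I,a,b,I-\ ?)$. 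One checks $c_{I,J}=\min(I,J,a,b)+1=\min(I,J,a)+1$, matching the count from the right-hand side. So the two sides have the same coefficients and the identity follows; the identity \eqref{inversesigmaprodrelation} is then the M\"obius inversion of \eqref{sigmaprodrelation} restricted to prime powers, where the only nonzero $\mu(d)$ terms are $d\in\{1,p\}$, giving $\sigma_{\alpha,\beta}(p^{a+b})=\sigma_{\alpha,\beta}(p^a)\sigma_{\alpha,\beta}(p^b)-p^{-\alpha-\beta}\sigma_{\alpha,\beta}(p^{a-1})\sigma_{\alpha,\beta}(p^{b-1})$ (with the convention $\sigma_{\alpha,\beta}(p^{-1})=0$), which is immediate from the closed-form expression above.

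Alternatively, and perhaps more transparently, one can prove \eqref{inversesigmaprodrelation} first and derive \eqref{sigmaprodrelation}. Using \eqref{sigmarel}, write $\sigma_{\alpha,\beta}(n)=n^{-\alpha}\sigma_{\alpha-\beta}(n)$ and recall the classical Ramanujan identity for the ordinary divisor sum, $\sigma_s(n)\sigma_s(m)=\sum_{d\mid(n,m)}d^s\,\sigma_s(nm/d^2)$; substituting $s=\alpha-\beta$ and multiplying through by $(nm)^{-\alpha}$, the factor $d^{\alpha-\beta}\cdot(nm/d^2)^{-\alpha}\cdot(nm)^{\alpha}\cdot(nm)^{-\alpha}$ bookkeeping yields exactly the weight $d^{-\alpha-\beta}$ on the right of \eqref{sigmaprodrelation}, and similarly the dual Ramanujan identity $\sigma_s(nm)=\sum_{d\mid(n,m)}\mu(d)d^s\sigma_s(n/d)\sigma_s(m/d)$ gives \eqref{inversesigmaprodrelation}. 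I would present this route since it isolates the one genuinely nontrivial input (the Ramanujan/Dirichlet-series identity for $\sigma_s$, itself proved by the same prime-power generating-function computation) and makes the weight $d^{-\alpha-\beta}$ appear naturally. There is no real obstacle here; the only point requiring a line of care is the bookkeeping of exponents when passing between $\sigma_{\alpha,\beta}$ and $\sigma_{\alpha-\beta}$ via \eqref{sigmarel}, and the handling of the degenerate case $\alpha=\beta$ (or $s=0$), which is covered by continuity or by keeping the summed form of $\sigma_{\alpha,\beta}(p^k)$ throughout.
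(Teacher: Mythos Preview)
Your proposal is correct and follows essentially the same approach as the paper: reduce \eqref{sigmaprodrelation} to prime powers by multiplicativity, verify it there by a direct computation, and then obtain \eqref{inversesigmaprodrelation} by M\"obius inversion. The paper simply declares the prime-power verification ``straightforward'' without details, whereas you supply the coefficient-counting argument and also note the alternative derivation via \eqref{sigmarel} and the classical Ramanujan identity for $\sigma_s$; either route is fine.
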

\begin{proof}
Both sides of \eqref{sigmaprodrelation} are multiplicative. Thus, it suffices to show that it holds for prime power, i.e., for any prime $p$ and any non-negative integers $u,v$,
\begin{align*}
 \sigma_{\alpha,\beta}(p^u)\sigma_{\alpha,\beta}(p^v)= &\sum_{d|p^{\min(u,v)}}d^{-\alpha-\beta}\sigma_{\alpha,\beta}\Big(\frac {p^{u+v}}{d^2}\Big),
\end{align*}
which can be verified in a straightforward way.  Now \eqref{inversesigmaprodrelation} follows from \eqref{sigmaprodrelation} and M\"obius inversion. This completes the proof of the lemma.
\end{proof}

   Let $\omega(n)$ denote the number of primes dividing $n$ and $d(n)$ the divisor function of $n$.  We have the following well-known estimates (see \cite[Theorems 2.10, 2.11]{MVa1}) that for $n \geq 3$,
\begin{align}
\label{omegandnbound}
\omega(n), \ \log d(n) \ll \frac {\log n}{\log \log n}. 
\end{align}

For $\alpha, \beta, \gamma, \delta \in \left\{z \in \bC: \Re(z) < \eta/\log q \right\}$ and that for $n, m \leq q^{O(1)}$,
\begin{align}
\label{sigmabound}
  \sigma_{\alpha,\beta}(n) \ll d(n) \ll n^{\varepsilon} \quad \mbox{and} \quad \sigma_{\gamma,\delta}(m) \ll d(m) \ll m^{\varepsilon}. 
\end{align}

\subsection{Approximate functional equation}
 We cite the following approximate functional equation concerning products of Dirichlet $L$-functions.
\begin{lemma}
\label{lemafe}
Let $\mathcal{G}(s)$ be an even, entire function of exponential decay in any strip $|\Re(s)|<C$ satisfying $G(0)=1$, and
\begin{align}
\label{defV}
V_{\alpha,\beta,\gamma,\delta}(x)=\frac1{2\pi i}\int\limits_{(2)}\frac{\mathcal{G}(s)}{s}g_{\alpha,\beta,\gamma,\delta}(s)x^{-s} \dif s, \; \mbox{where} \; g_{\alpha,\beta,\gamma,\delta}(s)=\pi^{-2s}\frac{\Gamma\left(\frac{ \frac12+\alpha+s}2\right) \Gamma\left(\frac{ \frac12+\beta+s}2\right) \Gamma\left(\frac{ \frac12+\gamma+s}2\right) \Gamma\left(\frac{ \frac12+\delta+s}2\right)}{\Gamma\left(\frac{\frac12+\alpha}2\right) \Gamma\left(\frac{\frac12+\beta}2\right) \Gamma\left(\frac{\frac12+\gamma}2\right) \Gamma\left(\frac{\frac12+\delta}2\right)}.
\end{align}
Define
\begin{align*}
\widetilde{V}_{\alpha,\beta,\gamma,\delta}(x)= X_{-\gamma,-\delta,-\alpha,-\beta}V_{\alpha,\beta,\gamma,\delta}(x),
\end{align*}
 with $X_{*,*,*,*}$ given in \eqref{eqX}. Then for any even primitive Dirichlet character $\chi$ modulo $q$, we have 
\begin{align}
\label{approxfcneqnL4}
\begin{split}
 L & \left(\tfrac12+\alpha,  \chi\right) L\left(\tfrac12+\beta,\chi\right) L\left(\tfrac12+\gamma,\overline{\chi}\right) L\left(\tfrac12+\delta,\overline{\chi}\right) \\
& \hspace*{1cm}  =  \sum_{m,n}\frac{\sigma_{\alpha,\beta}(m)\sigma_{\gamma,\delta}(n) \chi(m)\overline{\chi}(n)} {(mn)^{\frac12}}V_{\alpha,\beta,\gamma,\delta}\left(\frac{mn}{q^2}\right) + \sum_{m,n}\frac{\sigma_{-\gamma,-\delta}(m)\sigma_{-\alpha,-\beta}(n) \chi(m)\overline{\chi}(n)} {(mn)^{\frac12}}\widetilde{V}_{-\gamma,-\delta,-\alpha,-\beta}\left(\frac{mn}{q^2}\right).
\end{split}
\end{align}
  Moreover, for any $c>0$ and all integers $j \geq 0$,
\begin{align}
\label{W}
 V^{(j)}_{\alpha,\beta,\gamma,\delta}(x), \; \widetilde{V}^{(j)}_{-\gamma,-\delta,-\alpha,-\beta}(x)  \ll_c \min( 1 , (|\alpha|+1)^{c/2}(|\beta|+1)^{c/2}(|\gamma|+1)^{c/2}(|\delta|+1)^{c/2}x^{-c}).
\end{align}
\end{lemma}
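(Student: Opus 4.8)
\emph{Overview and set-up.} The plan is to obtain \eqref{approxfcneqnL4} from the standard contour‑integral derivation of an approximate functional equation for a product of $L$-functions, and then to read off \eqref{W} by moving contours in \eqref{defV} and applying Stirling's formula. Throughout let $\mu$ denote a generic element of $\{\alpha,\beta,\gamma,\delta\}$. For an even primitive character $\chi$ modulo $q$, the completed $L$-function $\Lambda(s,\chi)=(q/\pi)^{s/2}\Gamma(s/2)L(s,\chi)$ is entire and satisfies $\Lambda(s,\chi)=\varepsilon_\chi\Lambda(1-s,\overline{\chi})$, with $\varepsilon_\chi=\tau(\chi)/\sqrt{q}$ of modulus one and $\varepsilon_\chi\varepsilon_{\overline{\chi}}=\chi(-1)=1$. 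I would introduce the entire function
\[
\Phi(s):=\Lambda\!\left(\tfrac12+\alpha+s,\chi\right)\Lambda\!\left(\tfrac12+\beta+s,\chi\right)\Lambda\!\left(\tfrac12+\gamma+s,\overline{\chi}\right)\Lambda\!\left(\tfrac12+\delta+s,\overline{\chi}\right),
\]
and pull out its conductor and gamma factors: writing $\Gamma_0=\prod_{\mu}\Gamma\!\left(\tfrac{1/2+\mu}{2}\right)$ and using $g_{\alpha,\beta,\gamma,\delta}$ from \eqref{defV}, one has that $\Phi(s)$ equals $(q/\pi)^{1+\frac{\alpha+\beta+\gamma+\delta}{2}}\Gamma_0\, q^{2s} g_{\alpha,\beta,\gamma,\delta}(s)$ times the product $L(\tfrac12+\alpha+s,\chi)L(\tfrac12+\beta+s,\chi)L(\tfrac12+\gamma+s,\overline{\chi})L(\tfrac12+\delta+s,\overline{\chi})$, so that $\Phi(0)$ is $(q/\pi)^{1+\frac{\alpha+\beta+\gamma+\delta}{2}}\Gamma_0$ times the left side of \eqref{approxfcneqnL4}.

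\emph{The contour argument.} I would begin from $I:=\frac{1}{2\pi i}\int_{(3)}\frac{\mathcal{G}(s)}{s}\Phi(s)\,\dif s$, absolutely convergent since $\Phi$ decays exponentially on vertical lines (by Stirling) while $\mathcal{G}$ decays faster, and shift the contour to $\Re(s)=-3$. As $\Phi$ is entire, the only pole crossed is the simple pole of $\mathcal{G}(s)/s$ at $s=0$, of residue $\Phi(0)$ (using $\mathcal{G}(0)=1$), giving
\[
\Phi(0)=I-\frac{1}{2\pi i}\int_{(-3)}\frac{\mathcal{G}(s)}{s}\Phi(s)\,\dif s .
\]
On $\Re(s)=3$ I would expand the two $\chi$-factors as $\sum_m\sigma_{\alpha,\beta}(m)\chi(m)m^{-1/2-s}$ and the two $\overline{\chi}$-factors as $\sum_n\sigma_{\gamma,\delta}(n)\overline{\chi}(n)n^{-1/2-s}$ (both absolutely convergent there), integrate termwise, and recognise \eqref{defV}; this produces the first sum of \eqref{approxfcneqnL4} times $(q/\pi)^{1+\frac{\alpha+\beta+\gamma+\delta}{2}}\Gamma_0$. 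For the integral over $\Re(s)=-3$ I would apply the functional equation to each factor of $\Phi(s)$; using $\varepsilon_\chi^2\varepsilon_{\overline{\chi}}^2=1$ this turns $\Phi(s)$ into $\Lambda(\tfrac12-\alpha-s,\overline{\chi})\Lambda(\tfrac12-\beta-s,\overline{\chi})\Lambda(\tfrac12-\gamma-s,\chi)\Lambda(\tfrac12-\delta-s,\chi)$, after which the substitution $s\mapsto -s$ (valid since $\mathcal{G}$ is even) returns the integral to $\Re(s)=3$; the same Dirichlet-series expansion — now with the $\chi$-part $L(\tfrac12-\gamma+s,\chi)L(\tfrac12-\delta+s,\chi)=\sum_m\sigma_{-\gamma,-\delta}(m)\chi(m)m^{-1/2-s}$, the $\overline{\chi}$-part giving $\sigma_{-\alpha,-\beta}(n)$, and the archimedean factors collapsing into $g_{-\gamma,-\delta,-\alpha,-\beta}(s)$ — produces the second sum of \eqref{approxfcneqnL4} times $(q/\pi)^{1-\frac{\alpha+\beta+\gamma+\delta}{2}}\widetilde{\Gamma}_0$, where $\widetilde{\Gamma}_0=\prod_{\mu}\Gamma\!\left(\tfrac{1/2-\mu}{2}\right)$. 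Dividing through by $(q/\pi)^{1+\frac{\alpha+\beta+\gamma+\delta}{2}}\Gamma_0$, the scalar attached to the dual sum becomes $(q/\pi)^{-(\alpha+\beta+\gamma+\delta)}\widetilde{\Gamma}_0/\Gamma_0=\prod_{\mu}X_{\mu}=X_\alpha X_\beta X_\gamma X_\delta$, which by \eqref{eqX} is exactly the constant defining $\widetilde{V}_{-\gamma,-\delta,-\alpha,-\beta}$; this yields \eqref{approxfcneqnL4}.

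\emph{The weight bounds.} For the bound by $1$ I would shift the contour in \eqref{defV} to $\Re(s)=-\delta_0$ for a small fixed $\delta_0>0$: the residue at $s=0$ contributes $g_{\alpha,\beta,\gamma,\delta}(0)=1$, while on the shifted line $g_{\alpha,\beta,\gamma,\delta}$ is bounded (Stirling) and the remaining integral is $O(x^{\delta_0})$ by the decay of $\mathcal{G}$, so $V_{\alpha,\beta,\gamma,\delta}(x)=1+O(x^{\delta_0})\ll 1$. For the decaying bound I would instead shift to $\Re(s)=c$, where Stirling gives $g_{\alpha,\beta,\gamma,\delta}(c+it)\ll\prod_{\mu}(1+|\mu|)^{c/2}(1+|t|)^{O_c(1)}$ with the $t$-integral converging thanks to $\mathcal{G}$, whence $V_{\alpha,\beta,\gamma,\delta}(x)\ll\prod_{\mu}(1+|\mu|)^{c/2}x^{-c}$; combining the two estimates gives \eqref{W} for $j=0$. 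The derivatives are treated identically: differentiating under the integral sign inserts into the integrand a polynomial factor of degree $j$ in $s$ (which for $j\ge1$ also cancels the pole at $s=0$), harmless for both the contour shifts and the Stirling estimates. Finally, the bounds for $\widetilde{V}_{-\gamma,-\delta,-\alpha,-\beta}$ follow from those for $V_{-\gamma,-\delta,-\alpha,-\beta}$ together with $|X_\alpha X_\beta X_\gamma X_\delta|\ll 1$, which holds under the standing hypothesis $\Re(\mu)<\eta/\log q$ (with $|\mu|$ of polynomial size).

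\emph{Main obstacle.} The delicate point is keeping the Stirling estimates for $g_{\alpha,\beta,\gamma,\delta}$ uniform in the shifts: one must control the growth of the ratios of gamma functions in $|\Im(\mu)|$ and along each shifted contour precisely enough to produce the factors $(1+|\mu|)^{c/2}$ and to guarantee absolute convergence, for which the decay of $\mathcal{G}$ guaranteed by the hypothesis is essential. Everything else is bookkeeping of the $(q/\pi)$-powers and of the indices and shifts of the $\sigma$-coefficients on the two sides of \eqref{approxfcneqnL4}.
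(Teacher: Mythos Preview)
Your proposal is correct and follows essentially the same approach as the paper. The paper's own proof simply cites \cite[Proposition~2.4]{Young2011} for \eqref{approxfcneqnL4} and \cite[Proposition~5.4]{iwakow} for \eqref{W}, supplementing only the Stirling estimate \eqref{gest} for $g_{\alpha,\beta,\gamma,\delta}(s)$ and for $X_{-\gamma,-\delta,-\alpha,-\beta}$; your write-up spells out precisely those standard contour-shift arguments (integrate $\mathcal{G}(s)\Phi(s)/s$, apply the functional equation, identify the archimedean factor as $X_{\alpha,\beta,\gamma,\delta}$, then shift contours in \eqref{defV} using Stirling for the weight bounds), so there is no methodological difference.
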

\begin{proof}
  The proof of \eqref{approxfcneqnL4} can be found in \cite[Proposition 2.4]{Young2011}. By Stirling's formula \cite[(5.112)]{iwakow}, we see that
for $u, s \in \mc$ such that $\Re(u) \ll 1/\log q, \Im(u) \ll q^A$ for any real number $A>0$,
\begin{align*}
  \frac{\Gamma\left(u+s \right)}{ \Gamma\left(s \right) } \ll \frac {|s+u|^{\Re(s+u)-1/2}}{|s|^{\Re(s)-1/2}}\exp \Big(\frac {\pi}{2}(|s|-|s+u|)\Big) \ll (|s|+4)^{\Re(u)}\exp\Big(\frac {\pi}{2}|u|\Big) \quad \mbox{and} \quad \frac{\Gamma\left(\frac{1/2-u}{2}\right)}{\Gamma\left(\frac{1/2+u}{2}\right)} \ll_A 1. 
\end{align*}
  We apply the above to $g_{\alpha,\beta,\gamma,\delta}(s)$, arriving at
\begin{align} \label{gest}
  g_{\alpha,\beta,\gamma,\delta}(s) \ll (2\pi)^{-2\Re(s)}\Big ((|\alpha|+4)\cdot (|\beta|+4)\cdot (|\gamma|+4)\cdot (|\delta|+4) \Big )^{\frac {\Re(s)}2}\exp\Big(\pi|s|\Big) \quad \mbox{and} \quad
  X_{-\gamma,-\delta,-\alpha,-\beta} \ll_A 1.
\end{align}
Arguments simlilar to those in the proof of \cite[Propsition 5.4]{iwakow} lead to \eqref{W}, completing the proof of the lemma.
\end{proof}

  In what follows, we shall choose $\mathcal{G}$ without further mention to be of the form that $\mathcal{G}(s)=P_{\alpha,\beta,\gamma,\delta}(s)\exp(s^2)$, where $P_{\alpha,\beta,\gamma,\delta}(s)$ is an even polynomial in $s$ such that it is rational, symmetric, and even in the shifts $\alpha$, $\beta$, $\gamma$, $\delta$.  Moreover, we shall take  $P_{\alpha,\beta,\gamma,\delta}(s)$ to satisfy $P_{\alpha,\beta,\gamma,\delta}(0)=1$, $P_{\alpha,\beta,\gamma,\delta}\left(-\frac{\alpha+\gamma}2\right)= P_{\alpha,\beta,\gamma,\delta}\left(\frac12\pm\alpha\right)=0$ (which also implies by symmetry that as well as $P_{\alpha,\beta,\gamma,\delta}\left(\frac{\beta+\delta}2\right)=P_{\alpha,\beta,\gamma,\delta}\left(\frac12\pm\beta\right)=0$, etc.).

\subsection{Voronoi summation and Bessel functions}

  We quote the following Voronoi summation formula from \cite[Theorem 6.11]{Liu24}.
\begin{lemma}
\label{Voronoi}   
  Let $c$ be a positive integer and $a$ an integer co-prime to $c$, and $W$ a smooth function compactly supported on $(0,\infty)$. Then
 \begin{align*}
 \label{eq-voronoi-d}
\begin{split}
 & \sum_{n\geq 1} \sigma_{{\alpha_1, \alpha_2}}(n)W(n)e\Bigl(\frac{an}{c}\Bigr)= 
\frac{1}{c}  \int\limits_0^{\infty}\Big (x^{-\alpha_2}\frac {\zeta(1+\alpha_1-\alpha_2)}{c^{\alpha_1-\alpha_2}}+x^{-\alpha_1}\frac {\zeta(1+\alpha_2-\alpha_1)}{c^{\alpha_2-\alpha_1}}\Big )W(x) \dif x 
+  \frac{1}{c} \Big ( \mathcal{E}_J+\mathcal{E}_Y+\mathcal{E}_K \Big ),
  \end{split}
\end{align*} 
   where 
\begin{align*}
 \begin{split}
 \mathcal{E}_J=& -2\pi \sin\Big (\frac {\pi}{2}(\alpha_2-\alpha_1)\Big )\sum_{n\geq 1}\frac {\sigma_{\alpha_1-\alpha_2}(n)}{n^{(\alpha_1-\alpha_2)/2}}e\Bigl(-\frac{\overline{a}n}{c}\Bigr)\widetilde W_{J;\alpha_1, \alpha_2} \Bigl(\frac{n}{c^2}\Bigr), \\
\mathcal{E}_Y=& -2\pi \cos\Big (\frac {\pi}{2}(\alpha_2-\alpha_1)\Big )\sum_{n\geq 1}\frac {\sigma_{\alpha_1-\alpha_2}(n)}{n^{(\alpha_1-\alpha_2)/2}}e\Bigl(-\frac{\overline{a}n}{c}\Bigr)\widetilde W_{Y;\alpha_1, \alpha_2} \Bigl(\frac{n}{c^2}\Bigr), \quad \mbox{and} \\
\mathcal{E}_K=& 4 \sin\Big (\frac {\pi}{2}(\alpha_2-\alpha_1)\Big )\sum_{n\geq 1}\frac {\sigma_{\alpha_1-\alpha_2}(n)}{n^{(\alpha_1-\alpha_2)/2}}e\Bigl(-\frac{\overline{a}n}{c}\Bigr)\widetilde W_{K;\alpha_1, \alpha_2} \Bigl(\frac{n}{c^2}\Bigr). \\
\end{split}
\end{align*} 
Here, the transforms $
\widetilde W_{\mathcal{J};\alpha_1, \alpha_2}:(0,\infty)\to\mathbb{C}$ of $W$ for $\mathcal{J} \in \{J, Y, K \}$ are defined by
\begin{align*}
 \begin{split}
  \widetilde W_{\mathcal{J};\alpha_1, \alpha_2}(y)  =
   \displaystyle{ \int\limits_0^\infty u^{-(\alpha_1+\alpha_2)/2}W(u)\mathcal{J}_{\alpha_2-\alpha_1}(4\pi \sqrt{ uy}) \dif u},
 \end{split}
\end{align*} 
with $J_{\alpha}$, $Y_{\alpha}$ and $K_{\alpha}$ being classical Bessel functions.
\end{lemma}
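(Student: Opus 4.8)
My plan is to prove the identity in the classical way, reducing it to the analytic continuation and functional equation of the additively twisted Dirichlet series attached to $\sigma_{\alpha_1,\alpha_2}$ and then matching the resulting inverse Mellin transforms against the Mellin--Barnes representations of the Bessel functions; a ready-made reference is of course \cite[Theorem 6.11]{Liu24}. First I would write $W$ through its Mellin transform $\widehat W(s)=\int_0^\infty W(x)x^{s-1}\,\dif x$, which is entire and of rapid decay on vertical lines since $W$ is smooth and compactly supported in $(0,\infty)$. Inserting $W(x)=\frac1{2\pi i}\int_{(2)}\widehat W(s)x^{-s}\,\dif s$ and interchanging the sum and integral (legitimate because $\sigma_{\alpha_1,\alpha_2}(n)\ll n^{\varepsilon}$ by \eqref{sigmabound}) turns the left-hand side into $\frac1{2\pi i}\int_{(2)}\widehat W(s)\,E(s)\,\dif s$, where $E(s)=\sum_{n\geq 1}\sigma_{\alpha_1,\alpha_2}(n)e(an/c)n^{-s}$ is a generalized Estermann zeta function.

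Next I would establish the meromorphic continuation and functional equation of $E(s)$. Expanding $\sigma_{\alpha_1,\alpha_2}(n)=\sum_{de=n}d^{-\alpha_1}e^{-\alpha_2}$ and sorting $d$ and $e$ into residue classes modulo $c$ according to their greatest common divisor with $c$, one writes $E(s)$ as a finite combination, with coefficients that are Gauss-/Ramanujan-type sums, of products of Hurwitz zeta functions $\zeta(s+\alpha_1,\cdot)\zeta(s+\alpha_2,\cdot)$. The Hurwitz functional equation then produces a functional equation relating $E(s)$ to the dual series built from $\sigma_{\alpha_1-\alpha_2}(n)\,n^{-(\alpha_1-\alpha_2)/2}\,e(-\overline a n/c)$ at the point $1-s$, with an archimedean factor assembled from $\Gamma(s+\alpha_1)\Gamma(s+\alpha_2)/\bigl(\Gamma(1-s-\alpha_1)\Gamma(1-s-\alpha_2)\bigr)$ and trigonometric functions, and it exhibits $E(s)$ as holomorphic apart from (at most) simple poles at $s=1-\alpha_1$ and $s=1-\alpha_2$ arising from the constant ($e=1$ or $d=1$) contributions, with residues proportional to $c^{-1}\zeta(1+\alpha_2-\alpha_1)c^{-(\alpha_2-\alpha_1)}$ and $c^{-1}\zeta(1+\alpha_1-\alpha_2)c^{-(\alpha_1-\alpha_2)}$ respectively.

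Then I would shift the contour in $\frac1{2\pi i}\int_{(2)}\widehat W(s)E(s)\,\dif s$ to $\Re(s)=-C$ for large $C$, which is permissible by the rapid decay of $\widehat W$ and the polynomial growth of $E$ in vertical strips (via Phragm\'en--Lindel\"of, or directly from the Hurwitz representation). Crossing $s=1-\alpha_1$ and $s=1-\alpha_2$ produces the two residue terms, which recombine into the stated main term once one identifies $\int_0^\infty x^{-\alpha_j}W(x)\,\dif x=\widehat W(1-\alpha_j)$. In the leftover integral on $\Re(s)=-C$ I would apply the functional equation, unfold the dual series, and interchange sum and integral; the $n$-th term becomes $\frac1c\cdot\sigma_{\alpha_1-\alpha_2}(n)\,n^{-(\alpha_1-\alpha_2)/2}\,e(-\overline a n/c)$ times an inner integral of $\widehat W$ against the archimedean factor, evaluated at $n/c^2$. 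Using the Mellin--Barnes integrals for $J_\nu$, $Y_\nu$ and $K_\nu$ together with $\Gamma(w)\Gamma(1-w)=\pi/\sin(\pi w)$, one checks that this archimedean factor is exactly the Mellin transform in $y$ of $u\mapsto u^{-(\alpha_1+\alpha_2)/2}\mathcal J_{\alpha_2-\alpha_1}(4\pi\sqrt{uy})$, the splitting into the $J$-, $Y$- and $K$-pieces accounting for the trigonometric coefficients $-2\pi\sin(\tfrac\pi2(\alpha_2-\alpha_1))$, $-2\pi\cos(\tfrac\pi2(\alpha_2-\alpha_1))$ and $4\sin(\tfrac\pi2(\alpha_2-\alpha_1))$; this reproduces $\mathcal E_J+\mathcal E_Y+\mathcal E_K$ with the overall factor $1/c$.

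The hardest part will be this last matching: tracking the archimedean factor faithfully through the functional equation --- in particular the sign of the modular inverse ($\overline a$ versus $-\overline a$) and the correct powers of $\pi$ and of $c$ --- and pinning down the three trigonometric prefactors so that, above all, the non-oscillatory $K$-Bessel term carries the right constant. A useful consistency check is the specialization $\alpha_1=\alpha_2$, which must return the classical Voronoi summation formula for the ordinary divisor function. In practice I would simply quote \cite[Theorem 6.11]{Liu24}, or deduce the statement from the $\GL_2$ Eisenstein-series Voronoi formula, where precisely this bookkeeping is already carried out.
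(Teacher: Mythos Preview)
The paper does not prove this lemma at all: it is simply quoted verbatim from \cite[Theorem 6.11]{Liu24}, with no argument given. Your proposal is therefore consistent with the paper's treatment (you explicitly say you would quote that reference), but you go further by sketching the classical Estermann--Hurwitz proof. That sketch is correct in outline and is indeed how such formulas are established; for the purposes of matching the paper, however, a one-line citation suffices.
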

 
From \cite[Lemma 2.4]{BFKMM}, we have the following result on the decay properties of the Bessel transforms. 
\begin{lemma}
\label{besseldecay}  Suppose that $W$ is a smooth function compactly supported in $[1/2,2]$ and for some $Q>0$, 
\begin{equation*}
 W^{(j)}(x)\ll_{j, \varepsilon} Q^{j}.
\end{equation*}
Then for any integers $i$, $j\geq 0$ and all real $y>0$, $\varepsilon>0$, 
\begin{displaymath}
\begin{split}
  y^j\Big (\displaystyle{ \int\limits_0^\infty W(u)\mathcal{J}_{\alpha}(4\pi \sqrt{ uy}) \dif
  u}\Big )^{(j)}(y) &\ll_{i,j,\varepsilon} (1+y)^{j/2}\big(1 +   y^{1/2}  Q^{-1}\big)^{-i}.
\end{split}
\end{displaymath}
In particular, the function $y \mapsto \int_0^\infty W(u)\mathcal{J}_{\alpha}(4\pi \sqrt{ uy}) \dif
  u$ decays rapidly when $y\gg Q^{2+\varepsilon}$.
\end{lemma}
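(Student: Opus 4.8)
The plan is to prove the estimate by separating the ranges $0<y\le 1$ and $y\ge 1$, in which the Bessel function $\mathcal{J}_\alpha(4\pi\sqrt{uy})$ is evaluated, respectively, near the origin and in its oscillatory regime; throughout one may assume $Q\ge 1$, which is the case in every application. Since $W$ is smooth and compactly supported, differentiation under the integral sign is legitimate, and writing $f(t)=\mathcal{J}_\alpha(4\pi\sqrt{t})$ one has
\[
\Big(\int_0^\infty W(u)\,\mathcal{J}_\alpha(4\pi\sqrt{uy})\,\dif u\Big)^{(j)}(y)=\int_0^\infty W(u)\,u^{j}\,f^{(j)}(uy)\,\dif u,
\]
so it suffices to bound this last integral. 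The case $\mathcal{J}_\alpha=K_\alpha$ is immediate: for $y\ge 1$ one has $K_\alpha(4\pi\sqrt{uy})\ll e^{-c\sqrt{y}}$ uniformly on the support of $W$, so the integral and all its $y$-derivatives are $\ll_A y^{-A}$ for every $A>0$; while for $y\le 1$ the target is $\asymp 1$ and follows from the local bounds on $K_\alpha$. Thus the content is carried by $J_\alpha$ and $Y_\alpha$.

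For $y\ge 1$ the argument $4\pi\sqrt{uy}$ is $\gg 1$ throughout $[1/2,2]$, and I would invoke the classical Hankel asymptotics in the exact form $\mathcal{J}_\alpha(x)=x^{-1/2}\big(e^{ix}B_{+}(x)+e^{-ix}B_{-}(x)\big)$, valid for $x\ge 1$ with $B_{\pm}$ smooth and satisfying $x^{\ell}B_{\pm}^{(\ell)}(x)\ll_{\ell}1$ (uniformly for $\alpha$ in a fixed compact set). Carrying out the $j$ derivatives in $t$ — the dominant contribution coming from hitting the exponential, which each time produces a factor $\asymp t^{-1/2}$ — rewrites $f^{(j)}(uy)$ as a sum over $\pm$ of $e^{\pm 4\pi i\sqrt{uy}}(uy)^{-j/2-1/4}$ times a function of $4\pi\sqrt{uy}$ with bounded relative derivatives. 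Hence the integral above becomes a finite sum of oscillatory integrals $\int_0^\infty A(u)\,e^{\pm 4\pi i\sqrt{uy}}\,\dif u$, where $A$ is supported in $[1/2,2]$, has size $\ll y^{-j/2-1/4}$, and has $\ell$-th derivative $\ll_\ell Q^{\ell}y^{-j/2-1/4}$ — the factors of $Q$ arising only from differentiating $W$, since differentiating $u^{j/2-1/4}$ or the slowly varying Bessel amplitude costs $\ll 1$. The phase $\pm 4\pi\sqrt{uy}$ has derivative $\asymp\sqrt{y}$ and no stationary point on the support, while all its higher $u$-derivatives are $\ll\sqrt{y}$, so $i$-fold integration by parts combined with the trivial bound gives
\[
\Big(\int_0^\infty W(u)\,\mathcal{J}_\alpha(4\pi\sqrt{uy})\,\dif u\Big)^{(j)}(y)\ll_{i,j}\;y^{-j/2-1/4}\big(1+y^{1/2}Q^{-1}\big)^{-i};
\]
multiplying by $y^{j}$ and using $y^{j/2-1/4}\le(1+y)^{j/2}$ yields the claimed bound in this range.

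For $0<y\le 1$ one has $(1+y)^{j/2}\asymp 1$, and since $Q\ge 1$ also $(1+y^{1/2}Q^{-1})^{-i}\asymp 1$, so it is enough to show that $y^{j}$ times the $j$-th derivative of the transform is $\ll 1$. Here I would use the power series of $\mathcal{J}_\alpha$ about the origin, $\mathcal{J}_\alpha(x)=x^{\pm\alpha}\Phi(x^{2})$ with $\Phi$ entire (together with an extra logarithmic term, and the usual minor adjustments, for $Y_\alpha$, $K_\alpha$ of integral order), which gives $f^{(j)}(t)\ll t^{\mp\alpha/2-j}$ for $0<t\le 2$; integrating against $W(u)u^{j}$ produces $\ll y^{\mp\alpha/2-j}$, so that $y^{j}$ times the $j$-th derivative is $\ll y^{\mp\alpha/2}\ll 1$ once one uses that $\Re(\alpha)$ is bounded — indeed $O(1/\log q)$ — in the applications. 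The final assertion is then automatic: taking $j=0$ and $i$ large in the main bound shows that for $y\gg Q^{2+\varepsilon}$ the transform is $\ll y^{-i\varepsilon/(2(2+\varepsilon))}$, i.e. it decays faster than any fixed power of $y$.

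The crux is the second paragraph: one must extract the oscillating factor $e^{\pm 4\pi i\sqrt{uy}}$ from $f^{(j)}$ \emph{after} the $j$-fold differentiation in $t$, with a remainder of the same shape and with bounded relative derivatives, and then verify that the resulting amplitude $A$ has $u$-derivatives of size only $\ll Q$ times its sup-norm, so that every integration by parts genuinely gains the full factor $Q/\sqrt{y}$. The rest is routine bookkeeping with Bessel asymptotics.
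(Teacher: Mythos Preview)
The paper does not prove this lemma at all: it simply quotes it as \cite[Lemma 2.4]{BFKMM}. Your proposal, by contrast, is a genuine direct proof, and the strategy --- split into $y\le 1$ and $y\ge 1$, use the power-series behaviour of $\mathcal{J}_\alpha$ near the origin for the first range, and the Hankel asymptotic $\mathcal{J}_\alpha(x)=x^{-1/2}\sum_{\pm}e^{\pm ix}B_\pm(x)$ together with repeated integration by parts for the second --- is exactly the standard one (and indeed is essentially how the cited lemma in BFKMM is proved). The key bookkeeping point you flag, that differentiating the amplitude $A(u)$ costs only a factor $Q$ because the $Q$-dependence enters solely through $W$, is correct and is what makes each integration by parts gain the full $Q/\sqrt{y}$.

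One small caveat worth tightening: in the range $0<y\le 1$ your bound $y^{j}F^{(j)}(y)\ll y^{\mp\alpha/2}$ is only $\ll 1$ uniformly as $y\to 0$ when $\Re(\alpha)$ has the favourable sign; for $Y_\alpha$ (and $K_\alpha$) with $\Re(\alpha)>0$ there is a mild singularity $y^{-\Re(\alpha)/2}$. In the paper's applications $\Re(\alpha)=O(1/\log q)$ and $y$ is bounded below by a fixed power of $q^{-1}$, so this costs at most $q^{\varepsilon}$ and is harmless --- but strictly speaking the stated bound ``for all $y>0$'' needs this proviso, and you should say so explicitly rather than leave it implicit in ``$\Re(\alpha)$ is bounded''.
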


\subsection{Automorphic Forms}
\label{SectionPreliminary}
 
  We include in this section some results (mostly taken from \cite[Section 2.2]{Z2019}) from the theory of automorphic forms necessary in the proof of Theorem \ref{twisted_fourth_moment_theorem}. Let $\chi$ be a Dirichlet character modulo $\ell$. We set $\kappa=\frac{1-\chi(-1)}{2}\in\{0,1\}$ and let $k\geq 2$ with $k\equiv \kappa$ (mod $2$).  Let $\mathcal{B}_k(\ell,\chi)$ (resp. $\mathcal{B}(\ell,\chi)$) denote a Hecke basis of the Hilbert space of holomorphic cusp forms of weight $k$ (resp. of Maa\ss \ cusp forms of weight $\kappa$) with respect to the Hecke congruence group $\Gamma_0(\ell)$ and with nebentypus $\chi$. We also use the Eisenstein series $E_{j}(\cdot,1/2+it)$ for a basis of the continuous spectrum, where $j$ runs over the set of parameters of the form
\begin{equation*}
\{ (\chi_1,\chi_2,f) \ | \ \chi_1\chi_2=\chi , \ f\in\mathcal{B}(\chi_1,\chi_2) \},
\end{equation*}
where $(\chi_1,\chi_2)$ ranges over the pairs of characters of modulus $\ell$ such that $\chi_1\chi_2=\chi$ and $\mathcal{B}(\chi_1,\chi_2)$ is some finite set depending on $(\chi_1,\chi_2)$. \newline
 
  For a Hecke eigenform $f$, denote its eigenvalues by $\lambda_f(n)$ for all $(n,\ell)=1$. The Fourier expansion of $f$ at a singular cusp $\mathfrak{a}$ can be written as ($z=x+iy$) :
$$f_{|^k\sigma_{\mathfrak{a}}}(z)=\sum_{n\geqslant 1}\rho_{f,\mathfrak{a}}(n)(4\pi n)^{k/2}e(nz) \ \ \mathrm{for} \ f\in\mathcal{B}_k(\ell,\chi),$$
and
$$f_{|_{\kappa}\sigma_{\mathfrak{a}}}(z)=\sum_{n\neq 0}\rho_{f,\mathfrak{a}}(n)W_{\frac{n}{|n|}\frac{\kappa}{2}it_f}(4\pi |n|y)e(nx) \ \ \mathrm{for } \ f\in\mathcal{B}(\ell,\chi),$$
where $\sigma_{\mathfrak{a}}$ is the scaling matrix of $\mathfrak{a}$, $W_{\frac{n}{|n|}\frac{\kappa}{2}it_f}$ is the Whittaker function and $t_f$ is the spectral parameter of $f$ such that $\lambda_f = 1/4+t_f^2$ with $\lambda_f$ being the eigenvalue for the hyperbolic Laplace operator. Moreover, the two slash operators $|^k\gamma$ and $|_\kappa \gamma$ of weights $k$ and $\kappa$ for any $\gamma=\left(\begin{smallmatrix} a & b \\ c & d \end{smallmatrix}\right)\in\mathrm{SL}_2(\mathbb{R})$ are defined by
$$f_{|^k\gamma}(z):= (cz+d)^{-k}f(\gamma z) \quad \mbox{and} \quad f_{|_\kappa \gamma}(z):= \left(\frac{cz+d}{|cz+d|}\right)^{-\kappa}f(\gamma z).$$

  We also write for an Eisenstein series $E_{\chi_1,\chi_2,f}(z,\tfrac12+it)$, 
\begin{alignat*}{1}
E_{\chi_1,\chi_2,f|_\kappa \sigma_{\mathfrak{a}}}(z, \tfrac12+it)=  \ c_{1,f,\mathfrak{a}}(t)y^{1/2+it}+c_{2,f,\mathfrak{a}}y^{1/2-it}   +  \sum_{n\neq 0}\rho_{f,\mathfrak{a}}(n,t)W_{\frac{n}{|n|}\frac{\kappa}{2}it_f}(4\pi |n|y)e(nx).
\end{alignat*}
  
  Denote $\rho_{f,\mathfrak{a}}$ by $\rho_{f}$ for $\mathfrak{a}=\infty$. Then for $(m,\ell)=1$ and $n\geqslant 1$, we have the multiplicative relations
\begin{align}
\label{RelationHecke1}
\lambda_{f}(m)\sqrt{n}\rho_{f}(n)= \sum_{d|(m,n)}\sqrt{\frac{mn}{d^2}}\rho_f\left(\frac{mn}{d^2}\right) \quad \mbox{and} \quad 
\sqrt{mn}\rho_f(mn)= \sum_{d|(m,n)}\mu(d)\rho_f\left(\frac{n}{d}\right)\sqrt{\frac{n}{d}}\lambda_f\left(\frac{m}{d}\right).
\end{align}
If $f$ is either a holomorphic cusp form or an Eisentein series, then
\begin{equation*}
|\lambda_f (n)|\leq d(n). 
\end{equation*}
For a Maa\ss \ cusp form $f$, from a result of H. H. Kim and P. Sarnak \cite{KimH03},
\begin{equation}\label{BoundHeckeEigenvalue2}
|\lambda_f(n)|\leqslant d(n)n^\theta , \ \ \theta=\frac{7}{64}.
\end{equation}
  Moreover,
\begin{equation}\label{BoundSpectralParamater}
|\Im m (t_f)|\leqslant \theta.
\end{equation}

\subsubsection{Kuznetsov trace formula}

 Let $\Phi : [0,\infty) \rightarrow \mathbb{C}$ be a smooth function such that $\Phi(0)=\Phi'(0)=0$ and $\Phi^{(j)}(x)\ll (1+x)^{-3}$ for $0\leqslant j\leqslant 3$. We define for $\kappa\in\{0,1\}$ the following three integral transforms. 
\begin{equation}\label{definitionBesselTransform}
\begin{split}
\dot{\Phi}(k) := & \ 4i^k\int\limits_0^\infty \Phi(x)J_{k-1}(x)\frac{\dif x}{x}, \quad \widehat{\Phi} (t) :=  \ \frac{2\pi i t^{\kappa}}{\sinh(\pi t)} \int\limits_0^\infty(J_{2it}(x)-(-1)^\kappa J_{-2it})\Phi(x)\frac{\dif x}{x} \quad \mbox{and} \\
\check{\Phi}(t):= & \ 8i^{-\kappa}\int\limits_0^\infty \Phi(x)\cosh (\pi t)K_{2it}(x)\frac{\dif x}{x}.
\end{split}
\end{equation}
 
  We have the following Kuznetsov trace formula from \cite[Proposition 2.3]{Z2019}. 
\begin{lemma}
\label{Kuznetsov} With the notation as above, let $\mathfrak{a},\mathfrak{b}$ two singular cusps for the congruence group $\Gamma_0(\ell)$ and let $a,b>0$ be integers. Then we have
\begin{align*}
\begin{split}
\sum_{\gamma}^{\Gamma_0(\ell)}\frac{1}{\gamma}S^{\chi}_{\mathfrak{a}\mathfrak{b}}(a,b;\gamma)\phi \left(\frac{4\pi \sqrt{ab}}{\gamma}\right) =  \sum_{\substack{k\geqslant 2 \\ k\equiv\kappa \ (2)}} & \sum_{f\in\mathcal{B}_k(\ell,\chi)}\dot{\phi}(k)\Gamma(k)\sqrt{ab}\overline{\rho_{f,\mathfrak{a}}}(a)\rho_{f,\mathfrak{b}}(b)  +  \sum_{f\in\mathcal{B}(\ell,\chi)}\hat{\phi}(t_f)\frac{\sqrt{ab}}{\cos(\pi t_f)}\overline{\rho_{f,\mathfrak{a}}}(a)\rho_{f,\mathfrak{b}}(b) \\ 
+ & \frac{1}{4\pi}\mathop{\sum\sum}_{\substack{\chi_1\chi_2=\chi \\ f\in\mathcal{B}(\chi_1,\chi_2)}}\int\limits_{\mathbb{R}}\hat{\phi}(t)\frac{\sqrt{ab}}{\cosh(\pi t)}\overline{\rho_{f,\mathfrak{a}}}(a,t)\rho_{f,\mathfrak{b}}(b,t) \dif t, 
\end{split}
\end{align*}
and 
\begin{align*}
\begin{split}
\sum_{\gamma}^{\Gamma_0(\ell)}\frac{1}{\gamma}S^{\chi}_{\mathfrak{a}\mathfrak{b}}(a,-b;\gamma)\phi \left(\frac{4\pi \sqrt{ab}}{\gamma}\right)  =   \sum_{f\in\mathcal{B}(\ell,\chi)} & \check{\phi}(t_f)\frac{\sqrt{ab}}{\cos(\pi t_f)}\overline{\rho_{f,\mathfrak{a}}}(a)\rho_{f,\mathfrak{b}}(-b) \\ + & \frac{1}{4\pi}\mathop{\sum\sum}_{\substack{\chi_1\chi_2=\chi \\ f\in\mathcal{B}(\chi_1,\chi_2)}}\int\limits_{\mathbb{R}}\check{\phi}(t)\frac{\sqrt{ab}}{\cosh(\pi t)}\overline{\rho_{f,\mathfrak{a}}}(a,t)\rho_{f,\mathfrak{b}}(-b,t) \dif t, 
\end{split}
\end{align*}
where $S^\chi_{\mathfrak{ab}}(n,m;\gamma)$ is the generalized twisted Kloosterman sum and defined by 
$$S^\chi_{\mathfrak{ab}}(n,m;\gamma):=\sum_{\left(\begin{smallmatrix} \alpha & \beta \\ \gamma & \delta \end{smallmatrix}\right) \in B\setminus \sigma_{\mathfrak{a}}^{-1}\Gamma_0(\ell)\sigma_{\mathfrak{b}}/ B}\overline{\chi}\left(\sigma_{\mathfrak{a}}\left(\begin{matrix}\alpha & \beta \\ \gamma & \delta   \end{matrix}\right)\sigma_{\mathfrak{b}}^{-1}\right)e\left(\frac{n\alpha+m\delta}{\gamma}\right).$$
The notation $\sum_\gamma^{\Gamma_0(\ell)}$ means that the sum runs over all positive $\gamma$ such that $S^\chi_{\mathfrak{ab}}(n,m;\gamma)$ is not empty.
\end{lemma}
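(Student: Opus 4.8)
\textbf{Proof plan for Theorem \ref{twisted_fourth_moment_theorem}.}

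The plan is to follow the strategy announced after the statement, combining the approximate functional equation with a careful analysis of off-diagonal terms via Voronoi summation, the large sieve inequality of Blomer--Mili\'cevi\'c, the $\delta$-method, and the Kuznetsov trace formula. First I would apply Lemma \ref{lemafe} to each of the four $L$-factors in $\mathcal{S}(\alpha,\beta,\gamma,\delta;a,b)$, writing the product as two double sums over $m,n$ weighted by $\sigma_{\alpha,\beta}(m)\sigma_{\gamma,\delta}(n)$ (resp.\ $\sigma_{-\gamma,-\delta}(m)\sigma_{-\alpha,-\beta}(n)$) and the smooth cutoffs $V_{\alpha,\beta,\gamma,\delta}(mn/q^2)$, $\widetilde V_{-\gamma,-\delta,-\alpha,-\beta}(mn/q^2)$. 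Summing over primitive even $\chi \bmod q$ via Lemma \ref{lemof} turns $\chi(ma)\overline\chi(nb)$ into the two congruence conditions $ma \equiv nb \pmod{q}$ (the ``minus'' term $ma \equiv -nb$ being handled identically by symmetry, or negligible since it forces $ma+nb$ divisible by $q$, a large modulus), together with the secondary $d \mid q$ terms coming from $\mu(q/d)$ which, since $q=q_0^{n_0}$, reduce to $d=q$ and $d=q/q_0$ and the latter contributes only a lower-order amount controlled by the decay \eqref{W}.

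The diagonal $ma=nb$ is handled in Section \ref{sec: mainterm}: with $(a,b)=1$ one writes $m=bk$, $n=ak$ and sums over $k$, producing a Dirichlet series in $k$ whose analytic continuation and residue calculation (using the prescribed zeros of $P_{\alpha,\beta,\gamma,\delta}$) yields exactly the six main terms $S_1,\dots,S_6$; the $X$-factors and the shifted $\tau$-functions arise from combining the two pieces of the approximate functional equation and from the Euler-product identity \eqref{taudef}. This step is essentially a bookkeeping exercise in contour shifting, matching the CFKRS recipe; the main care is tracking the dependence on $\alpha,\beta,\gamma,\delta$ to get the polynomial factors $(1+|\cdot|)^4$ in the error term and ensuring convergence in the stated region $\Re(z)<\eta/\log q$.

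For the off-diagonal $ma \equiv nb \pmod q$ with $ma \neq nb$, I would insert a smooth dyadic partition of unity so that $m \asymp M$, $n\asymp N$, and assume $N \ge M$ without loss of generality. When $N/M$ is large (say $N \ge M q^{\varepsilon}$, roughly), I follow \cite{BM15}: apply Voronoi summation (Lemma \ref{Voronoi}) to the long $n$-sum, which converts it to a dual sum of length $\asymp q^2/N \le M$; the Bessel transforms decay rapidly past $q^{2+\varepsilon}/N$ by Lemma \ref{besseldecay}, and the resulting divisor-type sum in residue classes mod $q$ is bounded by the large sieve Lemma \ref{largesieveKS} (\cite[Theorem 5]{BM15}), showing this range contributes only to the error term $O(q^{1-1/576+\varepsilon}\cdots)$ --- this is where the exponent $1/576$ and the factor $(ab)^7$ are produced, and where the hypothesis $n_0\ge 50$ enters through the $q^{1/n_0}$ term governing how well $q=q_0^{n_0}$ factors. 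When $M$ and $N$ are close, I use the $\delta$-method of \cite{DFI94} to detect $ma-nb=qr$ for $r\neq 0$, opening the congruence into an additive character sum; applying Voronoi (Lemma \ref{Voronoi}) to both the $m$- and $n$-sums produces Kloosterman-type sums, at which point the Kuznetsov formula (Lemma \ref{Kuznetsov}) on $\Gamma_0$ of the relevant modulus, together with the Kim--Sarnak bound \eqref{BoundHeckeEigenvalue2}, converts them into spectral sums that are small; following \cite[Section 6.4.3]{Liu24} one extracts a secondary main term that, after the contour manipulations, is absorbed into $S_1,\dots,S_6$, and following \cite[Theorem 4.2]{Z2019} the remaining spectral terms give an acceptable error. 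The main obstacle is the close-range case: one must uniformly control the spectral large-sieve-type bounds while tracking the shifts $\alpha,\beta,\gamma,\delta$ and the twist parameters $a,b$, and verify that the secondary main term produced by the $\delta$-method matches precisely the polar contributions already present in $S_1,\dots,S_6$ so that no spurious term survives; keeping every estimate uniform enough to satisfy \eqref{scondition} and to yield the clean power saving in \eqref{4thmomenteval} is the technical heart of the argument.
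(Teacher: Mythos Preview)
Your proposal addresses the wrong statement. The statement you were asked to prove is Lemma~\ref{Kuznetsov}, the Kuznetsov trace formula, but your proposal is explicitly a ``Proof plan for Theorem~\ref{twisted_fourth_moment_theorem}'' --- the main theorem of the paper. These are entirely different objects: Lemma~\ref{Kuznetsov} is a general spectral identity relating sums of twisted Kloosterman sums at two singular cusps of $\Gamma_0(\ell)$ to sums over holomorphic and Maa\ss\ cusp forms and Eisenstein series, whereas Theorem~\ref{twisted_fourth_moment_theorem} is the asymptotic evaluation of the twisted fourth moment that \emph{uses} Lemma~\ref{Kuznetsov} as one of several inputs.

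For the record, the paper does not prove Lemma~\ref{Kuznetsov} at all: it is quoted verbatim from \cite[Proposition~2.3]{Z2019} (see the sentence immediately preceding the lemma in Section~\ref{SectionPreliminary}). An actual proof would require the machinery of the spectral decomposition of $L^2(\Gamma_0(\ell)\backslash\mathbb{H},\chi)$ and the computation of inner products of Poincar\'e series at the cusps $\mathfrak{a},\mathfrak{b}$, none of which appears in your proposal. Your outline of the approximate functional equation, Voronoi summation, the $\delta$-method, and the spectral large sieve is a reasonable (if somewhat imprecise) sketch of the proof of Theorem~\ref{twisted_fourth_moment_theorem}, but it has no bearing on establishing the Kuznetsov formula itself.
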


\subsection{Large sieve inequalities}

   We include in this section some large sieve inequalities. We begin with one for Kloosterman sums. For integers $f, g$ and $r \geq 1$, the Kloosterman sum is defined to be
\begin{align}
\label{Kloosterman}
  S(f,g, r)=\sumstar_{\substack{ u \shortmod r }} e \Big(\frac {fu+g\overline u}{r}\Big), \quad \mbox{with} \quad u \overline{u} \equiv 1 \pmod{r}.
\end{align}
  
The following large sieve inequality for Kloosterman sums is a slight variation of given \cite[Theorem 5]{BM15} and its proof follows from a straightforward modification of that of the same.
\begin{lemma}
\label{largesieveKS} 
 Let $r,s, q \in \mn$ and $u \in \mz$ such that
\begin{equation*}
 s \mid r \mid q, \ (r/s, 2)=1, \ (u,r)=1.
\end{equation*}
    Let $M,K \geq 1$. Let $\lambda: \mn \rightarrow \mc$ be an arithmetic function with $|\lambda(n)| \ll 1$.Then
\begin{displaymath}
\begin{split}
 \sum_{\substack{M \leq m \leq 2M \\ (m,q)=1 }}\Big |\sum_{\substack{K \leq k \leq 2K }}\lambda(k)S(um,k,r) \Big |^2 \ll (qKM)^{\varepsilon}
\Big (MKrs+K^2M(rs)^{1/2}+\frac {K^2r^{3/2}}{s^{1/2}} \Big ).
\end{split}
\end{displaymath}
\end{lemma}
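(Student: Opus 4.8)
The plan is to adapt the proof of \cite[Theorem~5]{BM15}: the setup there differs from ours only superficially (more specific outer coefficients and coprimality conditions), so the real task is to verify that none of its estimates use anything beyond $|\lambda(n)|\ll 1$ together with the hypotheses $s\mid r\mid q$ and $(r/s,2)=1$. The underlying method is elementary: open the square in $m$ and complete the $m$-sum.

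First I would carry out the usual reductions. By positivity, insert a fixed smooth majorant $W(m/M)$ with $W$ supported on $[1/2,3]$, $W\ge\one$ on $[1,2]$ and $W^{(j)}\ll_j 1$, and remove the condition $(m,q)=1$ by M\"obius inversion over the divisors $d$ of the radical of $q$ (only $d\in\{1,q_0\}$ occur when $q$ is a prime power); each resulting term has its $m$-sum rescaled to length $M/d$ and its additive phase multiplied by $d$. Expanding the square then yields, up to these M\"obius terms,
\[
\sum_{K\le k_1,k_2\le 2K}\lambda(k_1)\overline{\lambda(k_2)}\ \sum_{m}W\!\Big(\tfrac{dm}{M}\Big)\,S(um,k_1,r)\,\overline{S(um,k_2,r)},
\]
and the substitution $x\mapsto\overline u x$ in the inner Kloosterman variable (valid because $(u,r)=1$) rewrites $S(um,k_j,r)=S(m,k_ju,r)$.

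Next I would complete the $m$-sum. Expanding the two Kloosterman sums into additive characters over reduced residues $x,y\bmod r$ turns the $m$-sum into $\sum_m W(dm/M)\,e\big(dm(x-y)/r\big)$, and Poisson summation in $m$ replaces it by a dual sum over integers $h$ of length $\ll (r/(d,r))M^{-1+\varepsilon}$; I would treat the frequency $h=0$ separately from $h\ne 0$. The term $h=0$ forces $x\equiv y\pmod{r/(d,r)}$, so when $(d,r)=1$ one gets $x=y$ and the $x$-sum collapses to a Ramanujan sum $c_r(k_1-k_2)\ll (r,k_1-k_2)$; bounding $M\sum_{k_1,k_2\asymp K}(r,k_1-k_2)\ll (qKM)^{\varepsilon}(MK^2+MKr)$ produces the first two terms of the claimed estimate (the case $(d,r)>1$ giving only boundedly many shifted copies). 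For $h\ne 0$, executing the $x,y$-sums leaves complete exponential sums modulo $r$ of Kloosterman type in the variables $h$ and $k_1-k_2$; factoring $r$ through $s$ and $r/s$ by the Chinese Remainder Theorem, the part modulo $r/s$ is bounded by the Weil estimate — this is where $(r/s,2)=1$ enters, exactly as in \cite{BM15}, keeping one in the clean regime for that bound — while the part modulo $s$ is handled trivially, and summing the outcome over $h,k_1,k_2$ with standard divisor-sum bounds gives the remaining terms $(qKM)^{\varepsilon}\big(K^2M(rs)^{1/2}+K^2r^{3/2}s^{-1/2}\big)$.

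The only genuinely nontrivial step — everything preceding it being bookkeeping — is this last one: arranging the length of the dual $h$-sum and the ranges of $k_1,k_2$ against the two moduli $s$ and $r/s$ so that the final bound emerges in precisely the stated shape, including the $M$-independent term $K^2r^{3/2}s^{-1/2}$ that comes from the short-$M$ regime where one simply completes the sum. This is the technical heart of \cite[Theorem~5]{BM15}, and since each of its estimates uses only $s\mid r$ and $(r/s,2)=1$, the argument carries over once the reductions above are in place; the weakened hypothesis $|\lambda(n)|\ll 1$ causes no trouble, as the $\lambda(k_j)$ are only ever removed in absolute value.
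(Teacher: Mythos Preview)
Your proposal is correct and follows the same approach as the paper: the paper itself gives no proof beyond noting that the lemma is a slight variation of \cite[Theorem~5]{BM15} whose proof carries over by a straightforward modification, and your outline describes precisely that modification (open the square, Poisson complete the $m$-sum, separate $h=0$ from $h\neq 0$, and factor $r$ through $s$ and $r/s$ to apply Weil on the odd part). One small remark: your $h=0$ bound $MK^2+MKr$ is not literally ``the first two terms'' of the stated estimate, but it is dominated by $MKrs+K^2M(rs)^{1/2}$ since $s\ge 1$, so this is only a phrasing issue.
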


Next, we cite some spectral large sieve inequalities. Adapting the notation in Section \ref{SectionPreliminary}, let $\ell_0$ be the conductor of $\chi$ and recall that each cusp (not necessarily singular) $\mathfrak{a}$ for $\Gamma_0(\ell)$ is equivalent to a fraction of the form $u/v$, where $v\geqslant 1$, $v|\ell$ and $(v,u)=1$. We then define 
\begin{equation*}
\nu(\mathfrak{a}):= \ell^{-1}\left(v,\frac{\ell}{v}\right).
\end{equation*}
Also, we set for a sequence a complex numbers $(a_n)$,  
\begin{alignat*}{1}
\| a_n \|_N^2 := & \ \sum_{N<n\leqslant 2N}|a_n|^2, \quad \Sigma^{(\mathrm{H})}(k,f,N):=  \ \sqrt{(k-1)!}\sum_{N<n\leqslant 2N}a_n \rho_{f,\mathfrak{a}}(n)\sqrt{n}, \\
\Sigma_{\pm}^{(\mathrm{M})}(f,N):= & \ \frac{(1+|t_f|)^{\pm \kappa/2}}{\sqrt{\cosh(\pi t_f)}}\sum_{N<n\leqslant 2N}a_n\rho_{f,\mathfrak{a}}(\pm n)\sqrt{n} \quad \mbox{and} \quad \Sigma^{(\mathrm{E})}_{\pm}(f,t,N):=  \ \frac{(1+|t|)^{\pm \kappa/2}}{\sqrt{\cosh(\pi t)}}\sum_{N<n\leqslant 2N}a_n\rho_{f,\mathfrak{a}}(\pm n,t)\sqrt{n}.
\end{alignat*}
Then we have the following spectral large sieve results from \cite[Proposition 2.4]{Z2019}.
\begin{lemma}
\label{TheoremSpectralLargeSieve}
 With the notation as in Section \ref{SectionPreliminary}, let $\mathfrak{a}$ be a singular cusp for the group $\Gamma_0(\ell)$.  Then, for real numbers $T\geqslant 1$ and $N\geqslant 1/2$ and any $(a_n)$ a sequence of complex numbers, 
\begin{alignat*}{1}
\sum_{\substack{2\leqslant k\leqslant T \\ k\equiv \kappa \ (2)}}\sum_{f\in\mathcal{B}_k(\ell,\chi)}\left|\Sigma^{(\mathrm{H})}(k,f,N)\right|^2\ll & \ \left(T^2+\ell_0^{1/2}\nu(\mathfrak{a})N^{1+\varepsilon}\right) \| a_n \|_N^2, \\
\sum_{|t_f|\leqslant T}\left|\Sigma_\pm^{(\mathrm{M})}(f,N)\right|^2 \ll & \ \left(T^2+\ell_0^{1/2}\nu(\mathfrak{a})N^{1+\varepsilon}\right) \| a_n \|_N^2, \\
\sum_{\substack{\chi_1\chi_2=\chi \\ f\in\mathcal{B}(\chi_1,\chi_2)}}\int_{-T}^T\left|\Sigma_{\pm}^{(\mathrm{E})}(f,t,N)\right|^2 dt \ll & \ \left(T^2+\ell_0^{1/2}\nu(\mathfrak{a})N^{1+\varepsilon}\right) \| a_n \|_N^2,
\end{alignat*}
 where all the implied constants depend on $\varepsilon$ only.
\end{lemma}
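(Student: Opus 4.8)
The plan is to prove all three estimates by the Deshouillers--Iwaniec method: apply the Kuznetsov trace formula (Lemma~\ref{Kuznetsov}) — and, for the holomorphic sum, the Petersson formula — in the direction that expresses a weighted spectral second moment as a diagonal term plus a sum of generalized twisted Kloosterman sums, and then bound everything while carrying along the dependence on the cusp $\mathfrak{a}$ (through $\nu(\mathfrak{a})$) and on the conductor $\ell_0$ of $\chi$. The three statements are structurally the same, only the transform attached to the test function changing, so I would treat them in parallel.

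First I would reduce the Maass and Eisenstein statements to a single estimate by positivity. Pick an even, non-negative test function $\Phi$ of the class admitted in Lemma~\ref{Kuznetsov}, depending on a scale $T$, with $\widehat{\Phi}(t)\gg 1$ for $|t|\le T$ and $\widehat{\Phi}$ of rapid decay for $|t|\gg T$ (such $\Phi$ exist by the standard constructions; for odd $\chi$ the auxiliary weight $(1+|t_f|)^{\pm\kappa/2}$ is absorbed into the choice of $\Phi$). Inserting $\widehat\Phi(t_f)\geq 1$, opening the square, and using that every spectral term in Lemma~\ref{Kuznetsov} with $\mathfrak{b}=\mathfrak{a}$ is non-negative, one obtains (the sign $\pm$ in the Fourier coefficient being reduced to $+$ by the reflection $z\mapsto-\bar z$, which replaces $\chi$ by $\overline{\chi}$, of the same conductor)
\[
\sum_{|t_f|\le T}\bigl|\Sigma^{(\mathrm{M})}_{+}(f,N)\bigr|^{2}
\ll
\sum_{N<n,m\le 2N} a_n\overline{a_m}\sqrt{nm}
\Bigl(\mathrm{D}(n,m)+\sum_{c}\frac{1}{c}S^{\chi}_{\mathfrak{a}\mathfrak{a}}(n,m;c)\,\widetilde{\Phi}\!\Bigl(\tfrac{4\pi\sqrt{nm}}{c}\Bigr)\Bigr),
\]
where $\widetilde{\Phi}$ is the integral transform of $\widehat\Phi$ dictated by Lemma~\ref{Kuznetsov} and the continuous-spectrum integral is treated identically. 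The diagonal $\mathrm{D}(n,m)$ is supported on $n=m$, and the standard evaluation of the main term in the Kuznetsov formula gives $\sum_{N<n\le 2N}\mathrm{D}(n,n)|a_n|^{2}\ll T^{2}\|a_n\|_N^{2}$, the factor $T^{2}$ arising from $\int_{0}^{T}t\,\mathrm{d}t$.

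The off-diagonal sum is the heart of the matter. At a singular cusp $\mathfrak{a}$ equivalent to $u/v$ with $v\mid\ell$, $(u,v)=1$, the moduli $c$ for which $S^{\chi}_{\mathfrak{a}\mathfrak{a}}(n,m;c)$ is non-empty form a progression of modulus $\asymp\nu(\mathfrak{a})^{-1}=\ell\,(v,\ell/v)^{-1}$, so $\#\{c\le C\}\ll\nu(\mathfrak{a})\,C$; and writing $S^{\chi}_{\mathfrak{a}\mathfrak{a}}$ in terms of ordinary Kloosterman sums twisted by Gauss sums of $\chi$ and invoking Weil's bound yields $S^{\chi}_{\mathfrak{a}\mathfrak{a}}(n,m;c)\ll_{\varepsilon}\ell_0^{1/2}(n,m,c)^{1/2}c^{1/2+\varepsilon}$. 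On the analytic side, classical bounds for $\widetilde\Phi$ of the type in Lemma~\ref{besseldecay} make $\widetilde\Phi(4\pi\sqrt{nm}/c)$ negligible unless $c\gg\sqrt{nm}\,T^{-1-\varepsilon}\asymp NT^{-1-\varepsilon}$, with tame size in the remaining range. Substituting these three inputs into $\sum_{c}\tfrac1c(\cdots)$, then bounding $|a_n\overline{a_m}|\le\tfrac12(|a_n|^{2}+|a_m|^{2})$ and using $\sum_{N<n\le 2N}(n,m,c)\ll d(c)N$, produces a total $\ll_{\varepsilon}\ell_0^{1/2}\nu(\mathfrak{a})N^{1+\varepsilon}\|a_n\|_N^{2}$; combined with the diagonal this gives the asserted bound for $\Sigma^{(\mathrm{M})}_{\pm}$, and the Eisenstein estimate follows verbatim since the continuous part of Lemma~\ref{Kuznetsov} enters with the same non-negative weight. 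For the holomorphic sum one argues the same way but with the Petersson formula in place of Kuznetsov for each weight $k$, summing over $2\le k\le T$; here the normalising factor $\Gamma(k)=(k-1)!$ in $\Sigma^{(\mathrm{H})}$ matches the Petersson weight, turning the diagonal into $\sum_{2\le k\le T}(k-1)\|a_n\|_N^{2}\asymp T^{2}\|a_n\|_N^{2}$, while the off-diagonal $J_{k-1}$-terms are controlled by the same Weil-bound and Bessel-decay estimates.

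I expect the main obstacle to be precisely the off-diagonal analysis carried out uniformly in the cusp $\mathfrak{a}$: extracting from the double-coset description $B\backslash\sigma_{\mathfrak{a}}^{-1}\Gamma_{0}(\ell)\sigma_{\mathfrak{a}}/B$ the exact set of admissible moduli $c$ (this is where the factor $\nu(\mathfrak{a})$ originates) and the correct Weil-type bound for $S^{\chi}_{\mathfrak{a}\mathfrak{a}}(n,m;c)$ — in particular isolating the conductor dependence $\ell_0^{1/2}$ coming from the Gauss sum of the nebentypus character factor $\overline{\chi}(\sigma_{\mathfrak{a}}\gamma\sigma_{\mathfrak{a}}^{-1})$. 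Once that structural input is in place, the summation over $c,n,m$ and the passage from the enlarged spectral sum back to the original are routine. This argument is essentially that of \cite[Proposition~2.4]{Z2019}, which adapts the Deshouillers--Iwaniec spectral large sieve to the presence of a nebentypus.
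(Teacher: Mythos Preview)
The paper does not prove this lemma at all: it is simply quoted verbatim from \cite[Proposition~2.4]{Z2019}, with no argument given. Your sketch is the standard Deshouillers--Iwaniec approach (Kuznetsov/Petersson in reverse, Weil bound for the generalized Kloosterman sums at the cusp, density $\nu(\mathfrak{a})$ of admissible moduli, Gauss-sum factor $\ell_0^{1/2}$ from the nebentypus), which is exactly the method underlying the cited result; so your proposal is correct and aligned with the source the paper defers to.
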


\section{Initial Treatments}
\label{sec 3}

\subsection{Setup}
    We apply the approximate functional equation \eqref{approxfcneqnL4} and the orthogonality relation \eqref{lemof2} to \eqref{Sdef}, getting 
\begin{align}
\label{1}
\begin{split}
  \mathcal{S}(\alpha, \beta, \gamma, \delta; a,b) = & \sum_{m,n} \frac{\sigma_{\alpha,\beta}(m)\sigma_{\gamma,\delta}(n)} {\sqrt{mn}}V_{\alpha,\beta,\gamma,\delta}\left(\frac{mn}{q^2}\right)\mathop{{\sum}^+}_{\chi \shortmod q} \chi(ma)\overline{\chi}(nb) \\
& \hspace*{1cm} + \sum_{m,n}\frac{\sigma_{-\gamma,-\delta}(m)\sigma_{-\alpha,-\beta}(n)} {\sqrt{mn}}\widetilde{V}_{-\gamma,-\delta,-\alpha,-\beta}\left(\frac{mn}{q^2}\right)\mathop{{\sum}^+}_{\chi \shortmod q} \chi(ma)\overline{\chi}(nb) \\
=& \frac 12 \sum_{d \mid q} \varphi(d)\mu\left(\frac{q}{d}\right) \sum_{\substack{m, n \\ ma  \pm nb \equiv 0 \shortmod d\\ (mn, q) = 1}}\frac{\sigma_{\alpha,\beta}(m)\sigma_{\gamma,\delta}(n)} {\sqrt{mn}}V_{\alpha,\beta,\gamma,\delta}\left(\frac{mn}{q^2}\right)\\
& \hspace*{1cm} + \frac 12 \sum_{d \mid q} \varphi(d)\mu\left(\frac{q}{d}\right) \sum_{\substack{m, n \\ ma  \pm nb \equiv 0 \shortmod d\\ (mn, q) = 1}} \frac{\sigma_{-\gamma,-\delta}(m)\sigma_{-\alpha,-\beta}(n)} {\sqrt{mn}}\widetilde{V}_{-\gamma,-\delta,-\alpha,-\beta}\left(\frac{mn}{q^2}\right).
\end{split}
\end{align}

Applying M\"obius inversion to remove the condition $(mn, q)=1$ (or equivalently $(n, q)=(m,q)=1$) reveals that
\begin{align}
\label{removeqexp}
\begin{split}
 \sum_{\substack{m, n \\ ma  \pm nb \equiv 0 \shortmod d\\ (mn, q) = 1}} & \frac{\sigma_{\alpha,\beta}(m)\sigma_{\gamma,\delta}(n)} {\sqrt{mn}}V_{\alpha,\beta,\gamma,\delta}\left(\frac{mn}{q^2}\right) \\
   =& \sum_{f_1, f_2 \mid q} \frac{\mu(f_1)\mu(f_2)}{\sqrt{f_1f_2}}  \sum_{\substack{m, n \\ f_1ma  \pm f_2nb \equiv 0 \shortmod d}}\frac{\sigma_{\alpha,\beta}(f_1m)\sigma_{\gamma,\delta}(f_2n)} {\sqrt{mn}}V_{\alpha,\beta,\gamma,\delta}\left(\frac{f_1f_2mn}{q^2}\right) \\
=& \sum_{\substack{m, n \\ ma  \pm nb \equiv 0 \shortmod d}}\frac{\sigma_{\alpha,\beta}(m)\sigma_{\gamma,\delta}(n)} {\sqrt{mn}}V_{\alpha,\beta,\gamma,\delta}\left(\frac{mn}{q^2}\right) \\
& +O\Big ( \Big| \sum_{\substack{f_1, f_2 \mid q \\ q_0 \mid \max (f_1, f_2)}} \frac{\mu(f_1)\mu(f_2)}{\sqrt{f_1f_2}} \sum_{\substack{m, n \\ f_1ma  \pm f_2nb \equiv 0 \shortmod d}}\frac{\sigma_{\alpha,\beta}(f_1m)\sigma_{\gamma,\delta}(f_2n)} {\sqrt{mn}}V_{\alpha,\beta,\gamma,\delta}\left(\frac{f_1f_2mn}{q^2}\right) \Big| \Big ).
\end{split}
\end{align}

    In view of \eqref{W}, we may assume that $f_1f_2nm \leq \big ((1+|\alpha|)(1+|\beta|)(1+|\gamma|)(1+|\delta|)\big)^{1/2+\varepsilon}q^{2+\varepsilon}$  with a negligible error in the last sum above. It follows from this \eqref{sigmabound} and \eqref{W}  that
\begin{align}
\label{removeqexpest}
\begin{split}
 \sum_{\substack{f_1, f_2 \mid q \\ q_0 \mid \max (f_1, f_2)}} & \frac{\mu(f_1)\mu(f_2)}{\sqrt{f_1f_2}}  \sum_{\substack{m, n \\ f_1ma  \pm f_2nb \equiv 0 \shortmod d}}\frac{\sigma_{\alpha,\beta}(f_1m)\sigma_{\gamma,\delta}(f_2n)} {\sqrt{mn}}V_{\alpha,\beta,\gamma,\delta}\left(\frac{f_1f_2mn}{q^2}\right)\\
\ll & q^{\varepsilon}\sum_{\substack{f_1, f_2 \mid q \\ q_0 \mid \max (f_1, f_2)}} \frac{1}{\sqrt{f_1f_2}}  \sum_{\substack{m, n \\ f_1ma  \pm f_2nb \equiv 0 \shortmod d\\ f_1f_2nm \leq \big ((1+|\alpha|)(1+|\beta|)(1+|\gamma|)(1+|\delta|)\big)^{1/2+\varepsilon}q^{2+\varepsilon}}}\frac{1} {\sqrt{mn}}.
\end{split}
\end{align}

  Note that when $d=1$, the last expression above is
\begin{align}
\label{removeqexpestd1}
\begin{split}
 \ll q^{\varepsilon}\sum_{\substack{f_1, f_2 \mid q \\ q_0 \mid \max (f_1, f_2)}} \frac{1}{\sqrt{f_1f_2}}  \sum_{\substack{n  \leq \big ((1+|\alpha|)(1+|\beta|)(1+|\gamma|)(1+|\delta|)\big)^{1/2+\varepsilon}q^{2+\varepsilon}/f_1f_2}}\frac{d(n)} {\sqrt{n}} \ll q^{1+\varepsilon}q^{-3/2}_0.
\end{split}
\end{align}

 We may thus assume that $q_0|d$. We first consider the terms $f_1ma=f_2nb$ from the last expression in \eqref{removeqexpest}. As $(a,b)=1$, this implies that $a\frac {f_1}{(f_1,f_2)}|n$, $b\frac {f_2}{(f_1,f_2)}|m$. We may thus replace $m, n$ by $b\frac {f_2}{(f_1,f_2)}m$, $a\frac {f_1}{(f_1,f_2)}n$ respectively, mindful of $\alpha, \beta, \gamma, \delta \in \left\{z \in \bC: \Re(z) < \eta/\log q \right\}$ and \eqref{scondition}.  We get that the contribution from these terms is
\begin{align*}
\begin{split}
\ll & \frac{q^{\varepsilon}}{\sqrt{ab}} \sum_{\substack{f_1, f_2 \mid q \\ q_0 \mid \max (f_1, f_2)}} \frac{(f_1,f_2)}{f_1f_2} \sum_{\substack{ n \ll q^{O(1)}}}\frac{1} {n} \ll q^{\varepsilon}(ab)^{-1/2}q^{-1}_0.
\end{split}
\end{align*}

In the case that $f_1ma \neq f_2nb$, without loss of generality, we may assume that $f_1ma > f_2nb$. The condition $f_1ma  - f_2nb \equiv 0 \pmod{d}$ then implies that $f_1ma = f_2nb +dr$ for some integer $r>0$.  Now $f_1f_2nm \leq \big ((1+|\alpha|)(1+|\beta|)(1+|\gamma|)(1+|\delta|)\big)^{1/2+\varepsilon}q^{2+\varepsilon}$ further implies that $r \leq f_1ma/d \leq \big ((1+|\alpha|)(1+|\beta|)(1+|\gamma|)(1+|\delta|)\big)^{1/2+\varepsilon}q^{2+\varepsilon}a/f_2nd$. It follows that
\begin{align}
\label{removeqexpestoffdiag}
\begin{split}
 q^{\varepsilon} & \sum_{\substack{f_1, f_2 \mid q \\ q_0 \mid \max (f_1, f_2)}} \frac{1}{\sqrt{f_1f_2}}  \sum_{\substack{m, n \\ f_1ma  -f_2nb \equiv 0 \shortmod d\\ f_1ma  >f_2nb \\ f_1f_2nm \leq \big ((1+|\alpha|)(1+|\beta|)(1+|\gamma|)(1+|\delta|)\big)^{1/2+\varepsilon}q^{2+\varepsilon}}}\frac{1} {\sqrt{mn}} \\
\ll & q^{\varepsilon} \sqrt{ab} \sum_{\substack{f_1, f_2 \mid q \\ q_0 \mid \max (f_1, f_2)}} \ \sum_{\substack{n \\ (f_2nb)^2 \leq ab \big ((1+|\alpha|)(1+|\beta|)(1+|\gamma|)(1+|\delta|)\big)^{1/2+\varepsilon}q^{2+\varepsilon}}}\frac{1} {\sqrt{f_2nb}} \\
& \hspace*{1cm} \times \sum_{\substack{r \leq f_1ma/d \leq \big ((1+|\alpha|)(1+|\beta|)(1+|\gamma|)(1+|\delta|)\big)^{1/2+\varepsilon}q^{2+\varepsilon}a/f_2nd}}\frac{1} {\sqrt{f_2nb +dr}} \\
\ll & q^{\varepsilon}\sqrt{ab} \sum_{\substack{f_1, f_2 \mid q \\ q_0 \mid \max (f_1, f_2)}} \ \sum_{\substack{n \\ (f_2nb)^2 \leq ab \big ((1+|\alpha|)(1+|\beta|)(1+|\gamma|)(1+|\delta|)\big)^{1/2+\varepsilon}q^{2+\varepsilon}}}\frac{1} {\sqrt{f_2nb}} \\
& \hspace*{1cm} \times \sum_{\substack{r  \leq \big ((1+|\alpha|)(1+|\beta|)(1+|\gamma|)(1+|\delta|)\big)^{1/2+\varepsilon}q^{2+\varepsilon}a/f_2nd}}\frac{1} {\sqrt{dr}} \\
\ll & \big ((1+|\alpha|)(1+|\beta|)(1+|\gamma|)(1+|\delta|)\big)^{1/4+\varepsilon}q^{1+\varepsilon}\frac {a}{d}\sum_{\substack{f_1, f_2 \mid q \\ q_0 \mid \max (f_1, f_2)}} \ \sum_{\substack{n \\ (f_2nb)^2 \leq ab \big ((1+|\alpha|)(1+|\beta|)(1+|\gamma|)(1+|\delta|)\big)^{1/2+\varepsilon}q^{2+\varepsilon}}}\frac{1} {f_2n} \\
\ll & \big ((1+|\alpha|)(1+|\beta|)(1+|\gamma|)(1+|\delta|)\big)^{1/4+\varepsilon}q^{1+\varepsilon}\frac {a}{f_2d}.
\end{split}
\end{align}

  If $q_0|f_2$, then the above is
\begin{align}
\label{removeqexpestoffdiagq0f2}
\begin{split}
 \ll & \big ((1+|\alpha|)(1+|\beta|)(1+|\gamma|)(1+|\delta|)\big)^{1/4+\varepsilon}q^{1+\varepsilon}\frac {a}{q_0d}.
\end{split}
\end{align}
  
Otherwise, we have $f_2=1$, in which case $q_0|f_1$. Notice that the congruence condition $f_1ma  \pm f_2nb \equiv 0 \pmod{d}$ is solvable only when $(d, f_1a)|f_2nb=nb$.  As $(ab, q)=1$ and $d |q$, $(ab, d)=1$. It follows that $(d,f_1)|n$. Keeping in mind that $q_0 |d$ so $q_0|n$.  We now replace $n$ by $q_0n$ and apply the estimations in \eqref{removeqexpestoffdiag} to see that in this case the first expression given in \eqref{removeqexpestoffdiag} is bounded by the estimation given in \eqref{removeqexpestoffdiagq0f2} as well. \newline

 Similarly, the condition $f_1ma  + f_2nb \equiv 0 \pmod{d}$ implies that $f_1ma +f_2nb=dr$ for $r>0$. In this case, we have $dr<2f_1ma \leq 2\big ((1+|\alpha|)(1+|\beta|)(1+|\gamma|)(1+|\delta|)\big)^{1/2+\varepsilon}q^{2+\varepsilon}a/f_2n$. Moreover, we have $(f_1ma)^{-1/2} \leq 2^{1/2}(2f_1ma)^{-1/2} \leq
  2^{1/2}(f_1ma+f_2nb)^{-1/2}=2^{1/2}(rd)^{1/2}$. Thus our arguments above for the case $f_1ma  - f_2nb \equiv 0 \pmod{d}$ works for the case $f_1ma  + f_2nb \equiv 0 \pmod{d}$ as well.  Thus, from \eqref{removeqexpestd1} and \eqref{removeqexpestoffdiagq0f2} (with the observation that the case $ f_2nb >f_1ma$ leads to an estimation similar to that given in \eqref{removeqexpestoffdiagq0f2} with $a$ there being replaced by $b$) that for all $d|q$, 
\begin{align*}
\begin{split}
 \sum_{\substack{f_1, f_2 \mid q \\ q_0 \mid \max (f_1, f_2)}} & \frac{\mu(f_1)\mu(f_2)}{\sqrt{f_1f_2}}  \sum_{\substack{m, n \\ f_1ma  \pm f_2nb \equiv 0 \shortmod d}}\frac{\sigma_{\alpha,\beta}(f_1m)\sigma_{\gamma,\delta}(f_2n)} {\sqrt{mn}}V_{\alpha,\beta,\gamma,\delta}\left(\frac{f_1f_2mn}{q^2}\right) \\
\ll & \big ((1+|\alpha|)(1+|\beta|)(1+|\gamma|)(1+|\delta|)\big)^{1/4+\varepsilon}q^{1+\varepsilon}\frac {a+b}{q_0d}.
\end{split}
\end{align*}

Inserting the above into \eqref{removeqexp},
\begin{align}
\label{mnqrel}
\begin{split}
& \sum_{\substack{m, n \\ ma  \pm nb \equiv 0 \shortmod d\\ (mn, q) = 1}} \frac{\sigma_{\alpha,\beta}(m)\sigma_{\gamma,\delta}(n)} {\sqrt{mn}}V_{\alpha,\beta,\gamma,\delta}\left(\frac{mn}{q^2}\right) \\
& \hspace*{0.5cm} = \sum_{\substack{m, n \\ ma  \pm nb \equiv 0 \shortmod d}}\frac{\sigma_{\alpha,\beta}(m)\sigma_{\gamma,\delta}(n)} {\sqrt{mn}}V_{\alpha,\beta,\gamma,\delta}\left(\frac{mn}{q^2}\right) +O\Big (\big ((1+|\alpha|)(1+|\beta|)(1+|\gamma|)(1+|\delta|)\big)^{1/4+\varepsilon}q^{1+\varepsilon}\frac {a+b}{q_0d} \Big ).
\end{split}
\end{align}

  Similarly, using \eqref{gest},
\begin{align*}
\begin{split}
 & \sum_{\substack{m, n \\ ma  \pm nb \equiv 0 \shortmod d \\(mn,q)=1}} \frac{\sigma_{-\gamma,-\delta}(m)\sigma_{-\alpha,-\beta}(n)} {\sqrt{mn}}\widetilde{V}_{-\gamma,-\delta,-\alpha,-\beta}\left(\frac{mn}{q^2}\right) \\
=& \sum_{\substack{m, n \\ ma  \pm nb \equiv 0 \shortmod d}} \frac{\sigma_{-\gamma,-\delta}(m)\sigma_{-\alpha,-\beta}(n)} {\sqrt{mn}}\widetilde{V}_{-\gamma,-\delta,-\alpha,-\beta}\left(\frac{mn}{q^2}\right) +O\Big (\big ((1+|\alpha|)(1+|\beta|)(1+|\gamma|)(1+|\delta|)\big)^{1/4+\varepsilon}q^{1+\varepsilon}\frac {a+b}{q_0d} \Big ).
\end{split}
\end{align*}

  Applying the above in \eqref{1}, we see that
\begin{align}
\label{2}
\begin{split}
 \mathcal{S}(\alpha, \beta, \gamma, \delta; a,b) =& D_{+}+D_{-}+S_{+}+S_{-}+ O\Big (\big ((1+|\alpha|)(1+|\beta|)(1+|\gamma|)(1+|\delta|)\big)^{1/4+\varepsilon}q^{1+\varepsilon}\frac {a+b}{q_0} \Big ), 
\end{split}
\end{align}
  where
\begin{align}
\label{momentdecomp}
\begin{split}
 D_{+}=& \frac 12 \sum_{d \mid q} \varphi(d)\mu\left(\frac{q}{d}\right) \sum_{\substack{m, n \\ ma  = nb }}\frac{\sigma_{\alpha,\beta}(m)\sigma_{\gamma,\delta}(n)} {\sqrt{mn}}V_{\alpha,\beta,\gamma,\delta}\left(\frac{mn}{q^2}\right), \\
D_{-}=& \frac 12 \sum_{d \mid q} \varphi(d)\mu\left(\frac{q}{d}\right) \sum_{\substack{m, n \\ ma  = nb }} \frac{\sigma_{-\gamma,-\delta}(m)\sigma_{-\alpha,-\beta}(n)} {\sqrt{mn}}\widetilde{V}_{-\gamma,-\delta,-\alpha,-\beta}\left(\frac{mn}{q^2}\right), \\
S_+=& \frac 12 \sum_{d \mid q} \varphi(d)\mu\left(\frac{q}{d}\right) \sum_{\substack{m, n \\ ma  \pm nb \equiv 0 \shortmod d \\ ma \neq nb}}\frac{\sigma_{\alpha,\beta}(m)\sigma_{\gamma,\delta}(n)} {\sqrt{mn}}V_{\alpha,\beta,\gamma,\delta}\left(\frac{mn}{q^2}\right) \quad \mbox{and} \\
S_{-}=& \frac 12 \sum_{d \mid q} \varphi(d)\mu\left(\frac{q}{d}\right) \sum_{\substack{m, n \\ ma  \pm nb \equiv 0 \shortmod d\\ ma \neq nb}} \frac{\sigma_{-\gamma,-\delta}(m)\sigma_{-\alpha,-\beta}(n)} {\sqrt{mn}}\widetilde{V}_{-\gamma,-\delta,-\alpha,-\beta}\left(\frac{mn}{q^2}\right). 
\end{split}
\end{align}

\subsection{Decomposition of  $S_{+}+S_{-}$}
\label{off}

In this section, we further decompose $S_{+}+S_{-}$, the off-diagonal contribution to the fourth moment.  By \cite[Lemma 1.6]{BFKMM}, there exist two non-negative functions $v_1(x)$, $v_2(x)$ supported on $[1/2,2]$ such that, for any non-negative integer $k$,
\begin{align}
\label{Vbounds}
\begin{split}
 v^{(k)}_j(x) \ll_{k, \varepsilon} q^{k\varepsilon}, \quad j=1,2.
\end{split}
\end{align}
  Further, we have the following smooth partition of unity:
\begin{align*}
\begin{split}
  \sum_{k \geq 0}v_j \Big(\frac x{2^k}\Big)=1, \quad j=1,2.
\end{split}
\end{align*}

  We apply this to $S_{+}$ and $S_-$ in \eqref{momentdecomp}, obtaining
\begin{align}
\label{expoffdiagsmoothpartition}
\begin{split}
S_{+} = \frac 12 \sum_{M,N}S_{+, M,N} \quad \mbox{and} \quad  S_{-} = \frac 12 \sum_{M,N}S_{-, M,N},
\end{split}
\end{align}
  where
\begin{align}
\label{Splusdecomp}
\begin{split}
S_{+, M,N}  =& \sum_{d \mid q} \varphi(d)\mu\left(\frac{q}{d}\right) \sum_{\substack{m, n \\ ma  \pm nb \equiv 0 \shortmod d\\ ma  - nb \neq 0}}\frac{\sigma_{\alpha,\beta}(m)\sigma_{\gamma,\delta}(n)} {\sqrt{mn}}V_{\alpha,\beta,\gamma,\delta}\left(\frac{mn}{q^2}\right)v_1\left(\frac{m}{M}\right)v_2\left(\frac{n}{N}\right) \quad \mbox{and} \\
S_{-, M,N} =& \frac 12 \sum_{d \mid q} \varphi(d)\mu\left(\frac{q}{d}\right) \sum_{\substack{m, n \\ ma  \pm nb \equiv 0 \shortmod d\\ ma \neq nb}} \frac{\sigma_{-\gamma,-\delta}(m)\sigma_{-\alpha,-\beta}(n)} {\sqrt{mn}}\widetilde{V}_{-\gamma,-\delta,-\alpha,-\beta}\left(\frac{mn}{q^2}\right)v_1\left(\frac{m}{M}\right)v_2\left(\frac{n}{N}\right). 
\end{split}
\end{align}

   In view of \eqref{W}, we may assume that $MN \leq \big ((1+|\alpha|)(1+|\beta|)(1+|\gamma|)(1+|\delta|)\big)^{1/2+\varepsilon}q^{2+\varepsilon}$ with a negligible error. It also follows from this and \eqref{scondition} that the number of such $N$ or $M$ is $\ll \log q$. Applying similar arguments that lead to the relation \eqref{mnqrel}, we also see that for these $M$ and $N$ under our consideration, 
\begin{align}
\label{Splusrel}
\begin{split}
 S_{+, M,N}  =S'_{+, M,N}+O\Big (\big ((1+|\alpha|)(1+|\beta|)(1+|\gamma|)(1+|\delta|)\big)^{1/4+\varepsilon}q^{1+\varepsilon}\frac {a+b}{q_0} \Big ),
\end{split}
\end{align} 
  where
\begin{align*}
\begin{split}
S'_{+, M,N} =& \sum_{d \mid q} \varphi(d)\mu\left(\frac{q}{d}\right) \sum_{\substack{m, n \\ ma  \pm nb \equiv 0 \shortmod d\\ ma  - nb \neq 0 \\ (m,q)=1 }}\frac{\sigma_{\alpha,\beta}(m)\sigma_{\gamma,\delta}(n)} {(mn)^{\frac12}}V_{\alpha,\beta,\gamma,\delta}\left(\frac{mn}{q^2}\right)v_1\left(\frac{m}{M}\right)v_2\left(\frac{n}{N}\right). 
\end{split}
\end{align*}

  We define $S'_{-, M,N}$ similarly and a the relation simliar to \eqref{Splusrel} holds for $S'_{-, M,N}$ as well.  Moreover, by \eqref{scondition} and the condition that $MN \leq \big ((1+|\alpha|)(1+|\beta|)(1+|\gamma|)(1+|\delta|)\big)^{1/2+\varepsilon}q^{2+\varepsilon}$, 
we see that $n, m \ll q^{O(1)}$. This together with the condition $\alpha, \beta, \gamma, \delta \in \left\{z \in \bC: \Re(z) < \eta/\log q \right\}$ and \eqref{omegandnbound} implies that
\begin{align}
\label{sigmaest}
  \sigma_{\alpha, \beta}(m) \ll d(m) \ll M^{\varepsilon}  \quad \mbox{and} \quad \sigma_{\gamma, \delta}(n) \ll d(n) \ll N^{\varepsilon}.
\end{align}

   The above leads to the following trivial bounds
\begin{equation}
\label{verytrivial}
  S_{\pm, M,N}, S'_{\pm, M,N} \ll  \frac{1}{(MN)^{1/2-\varepsilon}}\sum_{d \mid q} \varphi(d) \sum_{M \leq m \leq 2M} \sum_{\substack{N \leq n \leq 2N\\ ma  \pm nb \equiv 0 \bmod{d}\\ ma  - nb \neq 0 }} 1 \ll  (MN)^{1/2+\varepsilon}. 
\end{equation}

From \eqref{expoffdiagsmoothpartition}, \eqref{Splusrel} and \eqref{verytrivial}, 
\begin{align}
\label{Splusdecomp12}
\begin{split}
 S_{+}+S_{-} =& \frac 12 ( S_{+,1}+S_{-,1})+\frac 12 (S_{+,2}+S_{-,2})+O\Big (\big ((1+|\alpha|)(1+|\beta|)(1+|\gamma|)(1+|\delta|)\big)^{1/4+\varepsilon}q^{1+\varepsilon}\frac {a+b}{q_0} + q^{1-\eta_0+\varepsilon} \Big ),
\end{split}
\end{align}
  where
\begin{align}
\label{S12def}
\begin{split}
S_{\pm,1}=& \sum_{\substack{M,N \ll \log q \\ q^{2-2\eta_0} \leq MN  \leq \big ((1+|\alpha|)(1+|\beta|)(1+|\gamma|)(1+|\delta|)\big)^{1/2+\varepsilon}q^{2+\varepsilon} \\\max(M, N) \leq \min (M,N)q^{1-2\eta_1} } }S_{\pm, M,N} \quad \mbox{and} \\
S_{\pm,2}=& \sum_{\substack{M,N \ll \log q \\ q^{2-2\eta_0} \leq MN  \leq \big ((1+|\alpha|)(1+|\beta|)(1+|\gamma|)(1+|\delta|)\big)^{1/2+\varepsilon}q^{2+\varepsilon} \\ \max(M, N) \geq \min (M,N)q^{1-2\eta_1} } }S'_{\pm, M,N}, 
\end{split}
\end{align}
 where $0<2\eta_0, 2\eta_1<1$ are constants to be specified later. \newline

Summarizing our discussions in this section from \eqref{2} and \eqref{Splusdecomp12}, we have the following result.
\begin{proposition}
\label{Seval}
  With the notation as above, we have
\begin{align*}
\begin{split}
 \mathcal{S}(\alpha, \beta, \gamma, \delta; a,b)  = D_{+}+D_{-} + & \frac 12 ( S_{+,1}+S_{-,1})+\frac 12 (S_{+,2}+S_{-,2}) \\
&+O\Big (\big ((1+|\alpha|)(1+|\beta|)(1+|\gamma|)(1+|\delta|)\big)^{1/4+\varepsilon}q^{1+\varepsilon}\frac {a+b}{q_0} + q^{1-\eta_0+\varepsilon} \Big ). 
\end{split}
\end{align*}
\end{proposition}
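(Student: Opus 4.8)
The plan is to obtain Proposition \ref{Seval} by a chain of elementary manipulations — the approximate functional equation, orthogonality of even characters, M\"obius inversion, and a smooth dyadic partition of unity — each step producing either one of the named pieces $D_\pm$, $S_{\pm,1}$, $S_{\pm,2}$ or a power-saving error. First I would apply Lemma \ref{lemafe} to write the product of the four $L$-functions in \eqref{Sdef} as the two $(m,n)$-sums with weights $V_{\alpha,\beta,\gamma,\delta}$ and $\widetilde V_{-\gamma,-\delta,-\alpha,-\beta}$, and then Lemma \ref{lemof} to evaluate the inner character sum $\sum^+_{\chi}\chi(ma)\overline\chi(nb)$, which introduces the sum over $d\mid q$ with weights $\varphi(d)\mu(q/d)$, the congruences $ma\pm nb\equiv 0\pmod d$, and the coprimality constraint $(mn,q)=1$; this is \eqref{1}.

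The first substantive step is to remove $(mn,q)=1$ by M\"obius inversion as in \eqref{removeqexp}. Since $q=q_0^{n_0}$ is a prime power, the correction is carried entirely by the terms with $q_0\mid\max(f_1,f_2)$, and here I would use the rapid decay \eqref{W} of $V$ (and \eqref{gest} for its dual) to truncate to $f_1f_2mn\ll\big((1+|\alpha|)(1+|\beta|)(1+|\gamma|)(1+|\delta|)\big)^{1/2+\varepsilon}q^{2+\varepsilon}$, after which the surviving sums are bounded trivially using $\sigma\ll d\ll q^{\varepsilon}$ from \eqref{sigmabound}. Within this correction one splits off the ``diagonal'' locus $f_1ma=f_2nb$ — where $(a,b)=1$ forces divisibilities and the count collapses to a single sum of $1/n$ over an $O(q^{O(1)})$ range — from the off-diagonal $f_1ma\neq f_2nb$; in the latter case one writes $f_1ma=f_2nb+dr$ with $r>0$, bounds $r$ by the size constraint, sums geometrically, and distinguishes the cases $q_0\mid f_2$ and $f_2=1$ (in which case $q_0\mid f_1$, and solvability of the congruence then forces $q_0\mid n$, recovering the same bound after $n\mapsto q_0n$). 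Summing over $d\mid q$ yields the error $\ll\big((1+|\alpha|)(1+|\beta|)(1+|\gamma|)(1+|\delta|)\big)^{1/4+\varepsilon}q^{1+\varepsilon}(a+b)/q_0$, and then splitting the clean sums into $ma=nb$ and $ma\neq nb$ gives \eqref{2} with the pieces in \eqref{momentdecomp}.

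Next I would decompose $S_++S_-$ dyadically: inserting the smooth partition of unity of \cite[Lemma 1.6]{BFKMM} in each of $m$ and $n$ localizes to $m\asymp M$, $n\asymp N$ as in \eqref{expoffdiagsmoothpartition}--\eqref{Splusdecomp}; the decay \eqref{W} truncates to $MN\ll\big((1+|\alpha|)(1+|\beta|)(1+|\gamma|)(1+|\delta|)\big)^{1/2+\varepsilon}q^{2+\varepsilon}$, leaving $\ll\log q$ relevant scales, and the argument behind \eqref{mnqrel} lets one re-insert or drop $(m,q)=1$ at admissible cost, producing $S'_{\pm,M,N}$ and \eqref{Splusrel}. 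Finally the trivial bound \eqref{verytrivial}, $S_{\pm,M,N},S'_{\pm,M,N}\ll(MN)^{1/2+\varepsilon}$, lets one discard the scales with $MN<q^{2-2\eta_0}$ at cost $\ll q^{1-\eta_0+\varepsilon}$, and the remaining scales are sorted into the balanced range $\max(M,N)\le\min(M,N)q^{1-2\eta_1}$ (giving $S_{\pm,1}$) and the unbalanced range $\max(M,N)\ge\min(M,N)q^{1-2\eta_1}$ (giving $S_{\pm,2}$), as in \eqref{S12def}; collecting \eqref{2} and \eqref{Splusdecomp12} then yields the proposition. The main obstacle I expect is the bookkeeping in the coprimality-removal step: one must estimate the off-diagonal correction $f_1ma\neq f_2nb$ uniformly in the shifts and in $a,b$, carefully counting the lattice points $(m,n,r)$ with $f_1ma=f_2nb+dr$ under the truncation and correctly tracking the extra $q_0$-divisibility forced when $f_2=1$, and then checking that \eqref{scondition} makes each such contribution a genuine power-saving error; the rest is a routine, if lengthy, succession of truncations and trivial bounds.
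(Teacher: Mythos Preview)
Your proposal is correct and follows essentially the same approach as the paper: apply the approximate functional equation and orthogonality to reach \eqref{1}, remove $(mn,q)=1$ by M\"obius inversion with the diagonal/off-diagonal split and the $q_0\mid f_2$ versus $f_2=1$ case analysis to reach \eqref{2} and \eqref{momentdecomp}, then insert the dyadic partition of unity, truncate via \eqref{W} and \eqref{verytrivial}, and sort the surviving scales into $S_{\pm,1}$ and $S_{\pm,2}$ as in \eqref{S12def} and \eqref{Splusdecomp12}. The paper's proof is exactly this chain of manipulations, and the obstacle you flag---the uniform lattice-point count in the coprimality-removal step---is indeed where the bulk of the work in Section~3 lies.
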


\section{The diagonal terms}
\label{sec: mainterm}
In this section, we evaluate the sum $D_{+}+D_{-}$, the diagonal contribution to the fourth moment.  From \cite[p.46]{iwakow}, we quote the identity
\begin{align}
\label{chistar}
 & \phis(q)=\sum_{d | q}\varphi(d)\mu\left(\frac{q}{d}\right).
\end{align}

As $(a,b)=1$, the condition $ma=nb$ implies that $a|n, b|m$. We may thus replace $m, n$ by $bm, an$ respectively.  Together with \eqref{chistar}, we get 
\begin{align}
\label{dt}
\begin{split}
 D_{+}=& \frac 12 \phis(q)\sum_{\substack{n }}\frac{\sigma_{\alpha,\beta}(bn)\sigma_{\gamma,\delta}(an)} {\sqrt{abn^2}}V_{\alpha,\beta,\gamma,\delta}\left(\frac{abn^2}{q^2}\right) \quad \mbox{and} \\
D_{-} =&   +\frac 12 \phis(q)\sum_{\substack{n }} \frac{\sigma_{-\gamma,-\delta}(bn)\sigma_{-\alpha,-\beta}(an)} {\sqrt{abn^2}}\widetilde{V}_{-\gamma,-\delta,-\alpha,-\beta}\left(\frac{abn^2}{q^2}\right).
\end{split}
\end{align}
  
  We apply \eqref{sigmarel} and \eqref{defV} to see from above that
\begin{align}
\label{diagint}
 D_{+}= \frac{\phis(q)}{4\pi i}\int\limits_{(2)}\frac{\mathcal{G}(s)}{s}g_{\alpha,\beta,\gamma,\delta}(s)q^{2s}Z_{\alpha, \beta, \gamma, \delta}(a, b,s) \dif s,
\end{align}
where
\begin{align*}
  Z_{\alpha, \beta, \gamma, \delta}(a, b, s)=& \sum_{n} \frac{\sigma_{\alpha,\beta}(b n)\sigma_{\gamma,\delta}(an)}{a^{1/2+s}b^{1/2+s} n^{1+2s}}= \sum_{n} \frac{\sigma_{\alpha-\beta}(b n)\sigma_{\gamma-\delta}(an)}{a^{1/2+\gamma+s}b^{1/2+\alpha+s} n^{1+\alpha+\gamma+2s}}.
\end{align*}

The discussions in Section A.3 in the preprint {\tt arXiv:1304.1241v3} gives that the Dirichlet series $ Z_{\alpha, \beta, \gamma, \delta}(a, b, s)$ can be recast as
\begin{align}
\label{Zseries}
\begin{split}
Z_{\alpha, \beta, \gamma, \delta}& (a, b, s) \\
=& \tau_{ \alpha, \beta, \gamma+2s, \delta+2s}(a)\tau_{ \gamma, \delta, \alpha+2s, \beta+2s}(b) \frac{\zeta(1 +\alpha + \gamma+2s)\zeta(1 + \alpha + \delta+2s) \zeta(1 + \beta + \gamma+2s) \zeta(1 + \beta + \delta+2s)}{a^{\frac{1}{2}+\gamma+s}b^{\frac{1}{2}+\alpha+s}\zeta(2 +\alpha + \beta +\gamma+\delta+4s)}.
\end{split}
\end{align}

  We shift the line of integration in \eqref{diagint} to $\Re(s)=-1/4+\max \{|\Re(\alpha)|,|\Re(\beta)|, |\Re(\gamma)|,|\Re(\delta)|\}+\varepsilon$, passing a simple pole at $s=0$, whose residue equals 
\begin{align}
\label{resmain}
\frac {\phis(q)}{2}\frac{\tau_{ \alpha, \beta, \gamma, \delta}(a)\tau_{ \gamma, \delta, \alpha, \beta}(b)}{a^{1/2 + \gamma} b^{1/2+\alpha}}\frac{\zeta(1 +\alpha + \gamma)\zeta(1 + \alpha + \delta) \zeta(1 + \beta + \gamma) \zeta(1 + \beta + \delta)}{\zeta(2 +\alpha + \beta +\gamma+\delta)},
\end{align}
with $\tau_{ \alpha, \beta, \gamma, \delta}$ defined in \eqref{taudef}.  Recall that $a, b \leq q$ and that $\alpha, \beta, \gamma, \delta \in \left\{z \in \bC: \Re(z) < \eta/\log q \right\}$. It follows from this and by taking $\eta$ small enough, we have on the line $\Re(s)=-1/4+\max \{|\Re(\alpha)|,|\Re(\beta)|, |\Re(\gamma)|,|\Re(\delta)|\}+\varepsilon$, 
\begin{align}
\label{zetapest}
\begin{split}
 \Big | \zeta_p(2 + \alpha + \beta + \gamma + \delta+4s) \Big | \leq & 4, \\ 
 \Big | (\zeta_p(1 +\alpha + \gamma+2s)\zeta_p(1 + \beta + \gamma+2s))^{-1} \Big |  \leq & 1 \quad \mbox{and} \\
 \Big | (\zeta_p(1 +\alpha + \delta+2s)\zeta_p(1 + \beta + \delta+2s))^{-1} \Big |  \leq & 1 .
\end{split}
\end{align}

Moreover, on that same line, we have
\begin{align}
\label{tauest}
\begin{split}
 \tau_{ \alpha, \beta, \gamma+2s, \delta+2s}(a) =& \prod_{\substack{ p| a\\ p^{\nu}\|a}}\left(1 + \frac{p^{\gamma-\delta}(p^{(\gamma-\delta)\nu}-1) \zeta_p(2 + \alpha + \beta + \gamma + \delta+4s)}{(p^{\gamma-\delta}-1)\zeta_p(1 +\alpha + \gamma+2s)\zeta_p(1 + \beta + \gamma+2s)}\right) \\
\ll & \prod_{\substack{ p| a\\ p^{\nu}\|a}}\left(1 + 4\Big |\frac{p^{\gamma-\delta}(p^{(\gamma-\delta)\nu}-1) }{p^{\gamma-\delta}-1}\Big |\right) = \prod_{\substack{ p| a\\ p^{\nu}\|a}}\left(1 + 4\Big |\sum^{\nu}_{j=1}p^{(\gamma-\delta)j} \Big |\right) \\
\ll & \prod_{\substack{ p| a\\ p^{\nu}\|a}}\left(1 + 4\nu p^{|\gamma-\delta|\nu} \right) \ll \prod_{\substack{ p| a\\ p^{\nu}\|a}}\left(5\nu p^{|\gamma-\delta|\nu} \right).
\end{split}
\end{align}
  
Note that it follows from \eqref{omegandnbound} that $\nu < \nu+1=d(p^{\nu}) \ll p^{\nu \varepsilon}$.  Hence, from this, \eqref{omegandnbound}  and \eqref{tauest} that
\begin{align}
\label{tauaest}
\begin{split}
 \tau_{ \alpha, \beta, \gamma+2s, \delta+2s}(a) \ll & 5^{\omega(a)}\prod_{\substack{ p| a\\ p^{\nu}\|a}}p^{(|\gamma-\delta|+\varepsilon)\nu} \ll a^{\varepsilon}\cdot a^{(|\gamma-\delta|+\varepsilon)} \ll a^{\varepsilon},
\end{split}
\end{align}
where the last estimate above follows by keeping in mind that $a, b \leq q$, $\alpha, \beta, \gamma, \delta \in \left\{z \in \bC: \Re(z) < \eta/\log q \right\}$ and $\Im (\gamma-\delta) =O(1)$.  From a similar argument, emerges also the bound
\begin{align}
\label{taubest}
\begin{split}
 \tau_{ \gamma, \delta, \alpha+2s, \beta+2s}(b) \ll b^{\varepsilon}.
\end{split}
\end{align}
Using \eqref{zetapest}, \eqref{tauaest} and \eqref{taubest} in \eqref{Zseries}, we see that, on the line $\Re(s)=-1/4+\max \{|\Re(\alpha)|,|\Re(\beta)|, |\Re(\gamma)|,|\Re(\delta)|\}+\varepsilon$, we have
\begin{align}
\label{zestfirst}
\begin{split}
 Z_{\alpha, \beta, \gamma, \delta}(a, b, s)  
\ll & (ab)^{-1/4+\varepsilon}\Big |\frac{\zeta(1 +\alpha + \gamma+2s)\zeta(1 + \alpha + \delta+2s) \zeta(1 + \beta + \gamma+2s) \zeta(1 + \beta + \delta+2s)}{\zeta(2 +\alpha + \beta +\gamma+\delta+4s)}\Big |. 
\end{split}
\end{align}

From \cite[(8.22)]{iwakow}, we have that the following subconvexity bound for $\zeta(s)$. For $1/2 \leq \Re(s) < 1$, 
\begin{align}
\label{weylsub}
 \zeta( s ) \ll (1+|s|)^{1/6+\varepsilon}.
\end{align}

  Moreover, it follows from \cite[Theorem 6.7]{MVa1} that on the line $\Re(s)=-1/4+ \max \{|\Re(\alpha)|,|\Re(\beta)|, |\Re(\gamma)|,|\Re(\delta)|\}+\varepsilon$,  
\begin{align}
\label{S0bound1}
 \frac{1}{\zeta(2 +\alpha + \beta +\gamma+\delta+4s)} \ll & \max (|\Re(2 +\alpha + \beta +\gamma+\delta+4s)-1|, \log (|\Im(2 +\alpha + \beta +\gamma+\delta+4s)|+4)) \\
\ll & \log |2 +\alpha + \beta +\gamma+\delta| \cdot \log (|s|+1).
\end{align} 

Utilizing \eqref{weylsub} and \eqref{S0bound1} in \eqref{zestfirst}, we get that on the afore-mentioned vertical line, 
\begin{align*}
Z_{\alpha, \beta, \gamma, \delta}(a, b, s) \ll  (ab)^{-\frac{1}{4}+\varepsilon}\big ((1+|\alpha|)(1+|\beta|)(1+|\gamma|)(1+|\delta|)\big)^{1/3+\varepsilon}(1+|s|)^{2/3+\varepsilon}.
\end{align*}

   We apply this, \eqref{gest} and use the rapid decay of $\mathcal{G}(s)$ on the vertical line to see that 
\begin{align*}
\frac1{2\pi i}  & \int\limits_{(-1/4+\max \{\Re(\alpha),\Re(\beta), \Re(\gamma),\Re(\delta)\}+\varepsilon)} \frac{\mathcal{G}(s)}{s}g_{\alpha,\beta,\gamma,\delta}(s)q^{2s}Z_{\alpha, \beta, \gamma, \delta}(a, b,s) \dif s \\
& \hspace*{1cm}  \ll (ab)^{-1/4+\varepsilon} q^{-1/2+\varepsilon}\big ((1+|\alpha|)(1+|\beta|)(1+|\gamma|)(1+|\delta|)\big)^{5/24+\max \{|\Re(\alpha)|,|\Re(\beta)|, |\Re(\gamma)|,|\Re(\delta)|\}/2+\varepsilon}.
\end{align*}

Hence, from \eqref{diagint}--\eqref{resmain} and the above,
\begin{align}
\label{diagasmp}
\begin{split}
 D_{+}
=& S_1 +O((ab)^{-1/4+\varepsilon} q^{\frac{1}{2}+\varepsilon}\big ((1+|\alpha|)(1+|\beta|)(1+|\gamma|)(1+|\delta|)\big)^{5/24+\max \{|\Re(\alpha)|,|\Re(\beta)|, |\Re(\gamma)|,|\Re(\delta)|\}/2+\varepsilon}),
\end{split}
\end{align}
  with $S_1$ given in \eqref{Sdef}. \newline

  Similar computation also renders
\begin{align}
\label{diagasmp1}
\begin{split}
D_{-}
=& S_6+O((ab)^{-1/4+\varepsilon} q^{1/2+\varepsilon}\big ((1+|\alpha|)(1+|\beta|)(1+|\gamma|)(1+|\delta|)\big)^{5/24+\max \{|\Re(\alpha)|,|\Re(\beta)|, |\Re(\gamma)|,|\Re(\delta)|\}/2+\varepsilon}).
\end{split}
\end{align}

Now the following proposition puts together our discussions in this section, after putting \eqref{diagasmp} and \eqref{diagasmp1} into \eqref{dt}.
\begin{proposition}
\label{Dsum}
  With the notation as above. We have
\begin{align*}
\begin{split}
 D_{+}+D_{-} =&  S_1+S_6 +O((ab)^{-1/4+\varepsilon} q^{1/2+\varepsilon}\big ((1+|\alpha|)(1+|\beta|)(1+|\gamma|)(1+|\delta|)\big)^{5/24+\max \{|\Re(\alpha)|,|\Re(\beta)|, |\Re(\gamma)|,|\Re(\delta)|\}/2+\varepsilon}).
\end{split}
\end{align*}
\end{proposition}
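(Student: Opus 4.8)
The plan is to prove Proposition \ref{Dsum} by evaluating $D_{+}$ and $D_{-}$ separately and then combining them via \eqref{dt}. I will focus on $D_{+}$, since $D_{-}$ follows by an entirely analogous argument with the shifts negated and $\widetilde{V}$ in place of $V$, absorbing the factor $X_{-\gamma,-\delta,-\alpha,-\beta} \ll_A 1$ from \eqref{gest}. First I would use $(a,b)=1$ together with the diagonal constraint $ma=nb$ to conclude $a \mid n$ and $b \mid m$, write $m = bm'$, $n = an'$, and observe that the constraint then forces $m' = n'$; after renaming the common variable $n$, and using the identity \eqref{chistar} to collapse $\sum_{d\mid q}\varphi(d)\mu(q/d)$ to $\phis(q)$, one arrives at the expression \eqref{dt}. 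Then, opening up the definition \eqref{defV} of $V_{\alpha,\beta,\gamma,\delta}$ as a contour integral and interchanging the sum over $n$ with the $s$-integral (justified by absolute convergence at $\Re(s)=2$ via \eqref{sigmabound}), I would write $D_{+}$ as the contour integral \eqref{diagint}, with the Dirichlet series $Z_{\alpha,\beta,\gamma,\delta}(a,b,s)$ defined as stated.

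The next step is to justify the Euler-product identity \eqref{Zseries}, which I would do by a standard local computation: the relation \eqref{sigmarel} rewrites the summand in terms of $\sigma_{\alpha-\beta}$ and $\sigma_{\gamma-\delta}$; the Dirichlet series $\sum_n \sigma_{\alpha-\beta}(bn)\sigma_{\gamma-\delta}(an) n^{-w}$ factors over primes, and for primes not dividing $ab$ the local factor is exactly $\zeta_p(1+\alpha+\gamma+2s)\zeta_p(1+\alpha+\delta+2s)\zeta_p(1+\beta+\gamma+2s)\zeta_p(1+\beta+\delta+2s)/\zeta_p(2+\alpha+\beta+\gamma+\delta+4s)$ (this is the classical Ramanujan identity for the Dirichlet series of a product of two divisor functions), while for primes dividing $a$ or $b$ the extra factor $p^{\nu(\alpha-\beta)}$ or similar in $\sigma_{\alpha-\beta}(p^{\nu}p^j)$ contributes precisely the correction encoded in $\tau_{\alpha,\beta,\gamma+2s,\delta+2s}(a)$ and $\tau_{\gamma,\delta,\alpha+2s,\beta+2s}(b)$ as in \eqref{taudef}. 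Since this is carried out in the cited preprint Section A.3 I would simply invoke it rather than reproduce it.

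With \eqref{Zseries} in hand, I would shift the contour in \eqref{diagint} from $\Re(s)=2$ to the line $\Re(s) = -\tfrac14 + \max\{|\Re(\alpha)|,|\Re(\beta)|,|\Re(\gamma)|,|\Re(\delta)|\} + \varepsilon$. The only pole crossed is the simple pole of $\mathcal{G}(s)/s$ at $s=0$ (the apparent poles of the zeta-factors at $s=0$ are cancelled by our choice of $P_{\alpha,\beta,\gamma,\delta}$ vanishing at $s=\tfrac12\pm\alpha$ etc., and there are no zeta-factor poles of $Z$ inside the strip being traversed because the arguments $1+c\pm d$ stay away from $1$ by the hypothesis $c\pm d \neq 0$ and the smallness of the shifts); the residue at $s=0$ is exactly \eqref{resmain}, which equals $S_1$. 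For the remaining integral on the shifted line, the bounds \eqref{zetapest}, \eqref{tauaest}, \eqref{taubest} give $Z \ll (ab)^{-1/4+\varepsilon}$ times a ratio of zeta-values, into which I would insert the Weyl-type subconvexity bound \eqref{weylsub} for the numerator and \eqref{S0bound1} for the reciprocal of $\zeta(2+\cdots)$; combining with the Stirling estimate \eqref{gest} for $g_{\alpha,\beta,\gamma,\delta}(s)$, the factor $q^{2s}$ contributing $q^{-1/2 + 2\max\{|\Re(\cdot)|\} + \varepsilon}$, and the rapid decay of $\mathcal{G}(s)$ to converge the $t$-integral, yields the stated error term in \eqref{diagasmp}. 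Running the same computation for $D_{-}$ gives \eqref{diagasmp1} with $S_6$, and adding the two establishes the proposition.

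The main obstacle, such as it is, is bookkeeping rather than conceptual: one must track that the polynomial $P_{\alpha,\beta,\gamma,\delta}$ in $\mathcal{G}$ is chosen precisely so that its zeros kill the potential higher-order contributions at $s=0$ coming from coincident zeta-poles, leaving a genuinely simple pole whose residue is the clean expression $S_1$; and one must verify the power of $(1+|\alpha|)(1+|\beta|)(1+|\gamma|)(1+|\delta|)$ in the error term, which comes from balancing the $1/6+\varepsilon$ from each of four Weyl bounds in \eqref{weylsub} against the exponential growth $\exp(\pi|s|)$ in \eqref{gest} — but since the shifts have bounded imaginary part in the relevant direction by hypothesis and $\mathcal{G}$ decays super-exponentially, this balancing is routine and produces the exponent $5/24 + \max\{|\Re(\cdot)|\}/2 + \varepsilon$ recorded in the statement.
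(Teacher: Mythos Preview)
Your proposal is correct and follows essentially the same approach as the paper: write $D_{+}$ as the contour integral \eqref{diagint} using \eqref{dt} and \eqref{defV}, invoke the Euler-product identity \eqref{Zseries}, shift the contour to $\Re(s)=-\tfrac14+\max\{|\Re(\alpha)|,\dots\}+\varepsilon$ picking up the residue $S_1$ at $s=0$, and bound the remaining integral via \eqref{zetapest}--\eqref{taubest}, \eqref{weylsub}, \eqref{S0bound1}, and \eqref{gest}. One small clarification: the zeta-factor poles at $s=-(\alpha+\gamma)/2$ and its symmetric companions \emph{do} lie in the strip being traversed (they are near $s=0$, not at it), and it is the zeros of $P_{\alpha,\beta,\gamma,\delta}$ at $-(\alpha+\gamma)/2$ etc.\ (not those at $\tfrac12\pm\alpha$, which handle the gamma-factor poles of $g_{\alpha,\beta,\gamma,\delta}$) that cancel them---but this does not affect the argument.
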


\section{Estimation of $S_{+, 2}+S_{-, 2}$}
\label{HeckeResidueClasses}

  In this section we estimate $S_{+, 2}+S_{-, 2}$. By symmetry, we may assume that $N \geq M$.  The condition $\max(M, N) \gg \min (M,N)q^{1-2\eta_1}$ then implies that $bN \geq 20aM$, provided that
\begin{align}
\label{abcond}
\begin{split}
  \max (a, b) \leq \min (a,b)\frac {q^{1-2\eta_1+\varepsilon}}{20},
\end{split}
\end{align}
a condition we shall henceforth assume. Then it follows that $bn \not= am$ is satisfied automatically.  Now using \eqref{defV} gives
\begin{align}
\label{Vint}
\begin{split}
 S'_{+, M,N}=& \sum_{d \mid q} \varphi(d)\mu\left(\frac{q}{d}\right) \int\limits_{(\varepsilon)} \frac{\mathcal{G}(s)}{s}g_{\alpha,\beta,\gamma,\delta}(s) \sum_{\substack{m, n \\ ma  \pm nb \equiv 0 \shortmod d\\  (m,q)=1}} \frac{\sigma_{\alpha, \beta}(m)\sigma_{\gamma, \delta}(n)  }{\sqrt{nm}} v_1\left(\frac{m}{M}\right)v_2\left(\frac{n}{N}\right) \left(\frac{nm}{q^2}\right)^{-s} \frac{\dif s}{2\pi i}.
\end{split}
\end{align}
  By the rapid decay of $\mathcal{G}(s)$ on the vertical line, the estimation given in \eqref{gest} and \eqref{scondition},  we can truncate the integral at $|\Im s| \leq (\log 5q)^2$ at a negligible error. We now define
\begin{align*}
\begin{split}
V_j \left(x \right)= x^{-1/2-s}v_j(x), \quad j=1,2.
 \end{split} 
\end{align*}

   Notice by \eqref{Vbounds} and the condition $|\Im s| \leq (\log 5q)^2$, the functions $V_1$ and $V_2$ are compactly support in $[1, 2]$ whose derivatives satisfy
\begin{equation}
\label{derivative}
  V_{j}^{(l)}(x) \ll (\log 5q)^{2l}\ll_j q^{l\varepsilon} , \; \mbox{for} \; j = 1, 2, \; l \geq 0. 
\end{equation}  

Using the above in \eqref{Vint}, \eqref{gest} and the rapid decay of $\mathcal{G}(s)$ on the critical line yield that \eqref{Vint} is
\begin{align}
\label{Vint1}
\begin{split}
\ll &  q^{\varepsilon}\Big |\sum_{d \mid q} \varphi(d)\mu\left(\frac{q}{d}\right) \frac{1}{\sqrt{NM}} \sum_{\substack{m, n \\ ma  \pm nb \equiv 0 \shortmod d\\(m,q)=1 }}  \sigma_{\alpha, \beta}(m)\sigma_{\gamma, \delta}(n)  V_1\left(\frac{m}{M}\right) V_2\left(\frac{n}{N}\right)\Big | \\
=& q^{\varepsilon}\Big |\sum_{d \mid q} \varphi(d)\mu\left(\frac{q}{d}\right) \frac{1}{\sqrt{NM}} \sum_{(m,q)=1} \sigma_{\alpha, \beta}(m) V_1\left(\frac{m}{M}\right) \sum_{ n \equiv \pm \overline{b}am \shortmod d} \sigma_{\gamma, \delta}(n) V_2\left(\frac{n}{N}\right)\Big |.
\end{split}
\end{align}
 
Using additive characters to detect the congruence condition above, the innermost sum in the last expression of \eqref{Vint1} equals
\begin{equation} \label{adddetect}
 \frac{1}{d} \sum_{ r \mid  d} \underset{u \shortmod r}{\left.\sum \right.^{\ast}} e\left(\pm \frac{\overline{b}amu}{r}\right) \sum_{n} \sigma_{\gamma, \delta}(n) e\left(-\frac{un}{r}\right) V_2\left(\frac{n}{N }\right),
\end{equation}
  Here and in what follows, the supscript $^*$ means that we restrict the summation to reduced residue classes. \newline

  We write for simplicity $V=V_2$ in this section and apply the Voronoi summation formula Lemma \ref{Voronoi}  to the $n$-sum in \eqref{adddetect} equals
\begin{align*}
   E_1+E_2+E_3+E_4,
\end{align*}
 where
\begin{align*}
 E_1=& \frac{1}{d} \sum_{ r \mid  d} \underset{u \shortmod r}{\left.\sum \right.^{\ast}} e\left(\pm \frac{\overline{b}amu}{r}\right) \frac{1}{r}  \int\limits_0^{+\infty}\Big (x^{-\gamma}\frac {\zeta(1+\gamma-\delta)}{r^{\gamma-\delta}}+x^{-\delta}\frac {\zeta(1+\delta-\gamma)}{r^{\delta-\gamma}}\Big )V\left(\frac{x}{N }\right) \dif x, \\
E_2=& -2\pi \sin\Big (\frac {\pi}{2}(\delta-\gamma)\Big )\frac{1}{d} \sum_{ r \mid  d}\frac{N^{1+(\gamma+\delta)/2}}{r}\sum_{n\geq 1}S\bigl(\pm \overline{b}am, n, r\bigr)\frac {\sigma_{\gamma-\delta}(n)}{n^{(\gamma-\delta)/2}}\widetilde V_{J;\gamma, \delta} \Bigl(\frac{nN}{r^2}\Bigr), \\
E_3=& -2\pi \cos\Big (\frac {\pi}{2}(\delta-\gamma)\Big )\frac{1}{d} \sum_{ r \mid  d}\frac{N^{1+(\gamma+\delta)/2}}{r}\sum_{n\geq 1}S\bigl(\pm \overline{b}am, n, r\bigr)\frac {\sigma_{\gamma-\delta}(n)}{n^{(\gamma-\delta)/2}}\widetilde V_{Y;\gamma, \delta} \Bigl(\frac{nN}{r^2}\Bigr) \quad \mbox{and} \\
E_4=& 4 \sin\Big (\frac {\pi}{2}(\delta-\gamma)\Big )\frac{1}{d} \sum_{ r \mid  d}\frac{N^{1+(\gamma+\delta)/2}}{r}\sum_{n\geq 1}S\bigl(\pm \overline{b}am, n, r\bigr)\frac {\sigma_{\gamma-\delta}(n)}{n^{(\gamma-\delta)/2}}\widetilde V_{K;\gamma, \delta} \Bigl(\frac{nN}{r^2}\Bigr), 
\end{align*}
   where $S$ is the Kloosterman sum defined in \eqref{Kloosterman}. \newline

Thus, from \eqref{Vint1} and the above that
\begin{align}
\label{SMNbound}
\begin{split}
S'_{+, M,N}\ll  &  q^{\varepsilon}\Big |\sum_{d \mid q} \varphi(d)\mu\left(\frac{q}{d}\right) \frac{1}{\sqrt{NM}} \sum_{(m,q)=1} \sigma_{\alpha, \beta}(m) V_1\left(\frac{m}{M}\right)(E_1+E_2+E_3+E_4) \Big |. 
\end{split}
\end{align}

  For integers $m,n$ with $m>0$, we recall that the Ramanujan sum $c_{m}(n)$ is defined to be (see \cite[(3.1)]{iwakow}) 
\begin{align}
\label{RSdef}
 c_{m}(n)=\underset{u \shortmod m}{\left.\sum \right.^{\ast}} e\left(\frac{nu}{m}\right). 
\end{align}
As $r|d|q$ and $(m,q)=1$, using \cite[(3.3)]{iwakow}, the Ramanujan sum can be evaluated as
\begin{align*}
 \underset{u \shortmod r}{\left.\sum \right.^{\ast}} e\left(\frac{\pm \overline{b}amu}{r}\right) =\underset{u \shortmod r}{\left.\sum \right.^{\ast}} e\left(\frac{u}{r}\right)=\mu(r). 
\end{align*}
Hence
\begin{align*}
 E_1=& \frac{1}{d} \sum_{ r \mid  d} \frac{N\mu(r)}{r}\Big (\frac {\zeta(1+\gamma-\delta)}{r^{\gamma-\delta}}\Big(\frac {1}{N} \Big)^{\gamma}\int\limits_0^{\infty}x^{-\gamma}V_2(x) \dif x+\frac {\zeta(1+\delta-\gamma)}{r^{\delta-\gamma}} \Big(\frac {1}{N} \Big)^{\delta}\int\limits_0^{\infty}x^{-\delta}V_2(x) \dif x \Big ) \\
=& \frac{1}{d}N^{1-\gamma}\zeta(1+\gamma-\delta)\prod_{p|d}(1-\frac {1}{r^{1+\gamma-\delta}})\int\limits_0^{\infty}x^{-\gamma}V_2(x) \dif x +\frac{1}{d}N^{1-\delta}\zeta(1+\delta-\gamma)\prod_{p|d}(1-\frac {1}{r^{1+\delta-\gamma}})\int\limits_0^{\infty}x^{-\delta}V_2(x) \dif x .
\end{align*}
  
  As $q=q^{n_0}_0$ with $n_0 \geq 4$, we see that
\begin{align*}
  \sum_{d \mid q} \varphi(d)\mu\left(\frac{q}{d}\right)E_1 = N^{1-\gamma} & \zeta(1+\gamma-\delta)\int\limits_0^{\infty}x^{-\gamma}V_2(x) \dif x\sum_{d \mid q}\frac {\varphi(d)}{d}\mu\left(\frac{q}{d}\right)\prod_{p|d}(1-\frac {1}{r^{1+\gamma-\delta}})\\
&+N^{1-\delta}\zeta(1+\delta-\gamma)\int\limits_0^{\infty}x^{-\delta}V_2(x) \dif x \sum_{d \mid q} \frac {\varphi(d)}{d}\mu\left(\frac{q}{d}\right)\prod_{p|d}(1-\frac {1}{r^{1+\delta-\gamma}}) = 0. 
\end{align*}
Therefore, \eqref{SMNbound} becomes
\begin{align*}
\begin{split}
S'_{+, M,N}\ll  &  q^{\varepsilon}\Big |\sum_{d \mid q} \varphi(d)\mu\left(\frac{q}{d}\right) \frac{1}{\sqrt{NM}} \sum_{(m,q)=1} \sigma_{\alpha, \beta}(m) V_1\left(\frac{m}{M}\right)(E_2+E_3+E_4) \Big |. 
\end{split}
\end{align*}

   We now estimate the right-hand side above. As the treatments are similar, we focus only the case $E_2$ here.  Applying Cauchy's inequality and using \eqref{sigmaest}, we see that
\begin{align}
\label{SboundE2}
 \begin{split}
 & q^{\varepsilon}\Big |\sum_{d \mid q} \varphi(d)\mu\left(\frac{q}{d}\right) \frac{1}{\sqrt{NM}} \sum_{(m,q)=1} \sigma_{\alpha, \beta}(m) V_1\left(\frac{m}{M}\right)E_2 \Big | \\
\ll & q^{\varepsilon}\Big (\Big|\sin\Big (\frac {\pi}{2}(\delta-\gamma)\Big )\Big |+ \Big|\cos\Big (\frac {\pi}{2}(\delta-\gamma)\Big )\Big |\Big )\sum_{d \mid q} \sum_{r \mid  d}  \frac{N^{1/2}}{r}\Bigl( \sum_{\substack{m \asymp M\\ (m,q)=1}}   \Bigl| \sum_{n}S\bigl(\pm\overline{b}am, n, r\bigr) \frac {\sigma_{\gamma-\delta}(n)}{n^{(\gamma-\delta)/2}}   V_{J;\gamma, \delta} \Bigl(\frac{nN}{r^2}\Bigr)\Bigr|^2\Bigr)^{1/2}. 
 \end{split} 
\end{align}

   It follows from \eqref{derivative} that we have
\begin{equation*}
  (x^{-(\gamma+\delta)/2}V_{2}(x))^{(j)} \ll  (|\gamma+\delta|q^{\varepsilon})^j .
\end{equation*}  
   We apply the above and Lemma \ref{besseldecay} to see that $V_{J;\alpha_1, \alpha_2} \Bigl(\frac{nN}{r^2}\Bigr)$ decays rapidly when $nN/r^2 > |\gamma+\delta|^{2+\varepsilon}q^{\varepsilon}$. Thus, we may restrict the sum over $n$ to $n \ll r^2|\gamma+\delta|^{2+\varepsilon}q^{\varepsilon}/N$ at the cost of a negligible error. Also, by  \eqref{sigmaest}, we see that $\frac {\sigma_{\gamma-\delta}(n)}{n^{(\gamma-\delta)/2}} \ll 1$ for the values of $n$ under our consideration. Moreover, the condition $(ab, q)=1$ implies that $(ab, r)=1$ as $r|d|q$.  We are thus able to apply the large sieve inequality for Kloosterman sums given in Lemma \ref{largesieveKS} by taking a divisor $q_1$ of $q$ and setting $s=(r,q_1)$ to the expression in \eqref{SboundE2} to see that with a negligible error, \eqref{SboundE2} is 
\begin{equation} \label{SboundE2simplified0}
 \ll  q^{\varepsilon}e^{\pi |\Im (\delta-\gamma)|/2}\sum_{d \mid q}\sum_{r \mid  d}  \frac{N^{1/2}}{r} \Bigl( \frac {r^2|\gamma+\delta|^{2}}{N}Mrq_1+\big (\frac {r^2|\gamma+\delta|^{2}}{N}\big )^2M(rq_1)^{1/2}+\big (\frac {r^2|\gamma+\delta|^{2}}{N}\big )^2\frac {r^{3/2}}{(r,q_1)^{1/2}}\Bigr)^{1/2}.
\end{equation}
  Now note that upon writing $d=rr'$ and using $(rr', q_0) \leq (r, q_0)(r',q_0)$, we see that for $r|d$,
\begin{align*}
 \begin{split}
  \frac {r^{11/2}}{(r,q_1)^{1/2}} \leq \frac {d^{11/2}}{(d,q_1)^{1/2}}.  
 \end{split} 
\end{align*}
Consequently, using the above in \eqref{SboundE2simplified0} and similar treatment for the terms with $E_3$ and $E_4$ in \eqref{SMNbound}, we get
\begin{align}
\label{SboundE2simplified}
 \begin{split}
  S'_{+, M,N} 
\ll & q^{\varepsilon}e^{\pi |\Im
(\delta-\gamma)|/2}\Bigl( q^{1/2}|\gamma+\delta|M^{1/2}q^{1/2}_1+ q^{5/4}|\gamma+\delta|^2\frac {M^{1/2}}{N^{1/2}}q^{1/4}_1+\frac {q^{7/4}|\gamma+\delta|^2}{N^{1/2}q^{1/4}_1}\Bigr),
  \end{split} 
\end{align}

  A similar treatment applies to $S'_{-, M,N}$ as well, so that
\begin{align}
\label{SboundE2simplified1}
 \begin{split}
  S'_{-, M,N} \ll & q^{\varepsilon}e^{\pi | \Im (\beta-\alpha)|/2}\Bigl( q^{1/2}|\alpha+\beta|M^{1/2}q^{1/2}_1+ q^{5/4}|\alpha+\beta|^2\frac {M^{1/2}}{N^{1/2}}q^{1/4}_1+\frac {q^{7/4}|\alpha+\beta|^2}{N^{1/2}q^{1/4}_1}\Bigr).
  \end{split} 
\end{align}

   We deduce from the conditions $q^{2-2\eta_0} \leq MN  \leq \big ((1+|\alpha|)(1+|\beta|)(1+|\gamma|)(1+|\delta|)\big)^{1/2+\varepsilon}q^{2+\varepsilon}, N  \geq Mq^{1-2\eta_1}$ that
\begin{align}
\label{Sprimebound}
\begin{split}
 q^{1/2}|\alpha+\beta|M^{1/2}q^{1/2}_1& + q^{5/4}|\alpha+\beta|^2\frac {M^{1/2}}{N^{1/2}}q^{1/4}_1+\frac {q^{7/4}|\alpha+\beta|^2}{N^{1/2}q^{1/4}_1} \\
\ll& \big ((1+|\alpha|)(1+|\beta|)(1+|\gamma|)(1+|\delta|)\big)^{2}q^{\varepsilon}\Big (q^{3/4+\eta_1/2}q^{1/2}_1+ q^{3/4+\eta_1}q^{1/4}_1+\frac {q^{1+\eta_0/2+\eta_1/2}}{q^{1/4}_1} \Big ).
\end{split}
\end{align}

  Similarly, we have
\begin{align}
\label{Sprimebound1}
\begin{split}
 q^{1/2}|\gamma+ & \delta|M^{1/2}q^{1/2}_1+ q^{5/4}|\gamma+\delta|^2\frac {M^{1/2}}{N^{1/2}}q^{1/4}_1+\frac {q^{7/4}|\gamma+\delta|^2}{N^{1/2}q^{1/4}_1} \\
\ll& \big ((1+|\alpha|)(1+|\beta|)(1+|\gamma|)(1+|\delta|)\big)^{2}q^{\varepsilon}\Big (q^{3/4+\eta_1/2}q^{1/2}_1+ q^{3/4+\eta_1}q^{1/4}_1+\frac {q^{1+\eta_0/2+\eta_1/2}}{q^{1/4}_1} \Big ).
\end{split}
\end{align}

  We summarize our discussions in this section from \eqref{S12def}, \eqref{SboundE2simplified}--\eqref{Sprimebound1} in the following result.
\begin{proposition}
\label{S2eval}
  With the notation as above, suppose that \eqref{abcond} holds. Then
\begin{align*}
 \begin{split}
  S_{+, 2} & +S_{-, 2} \\
&  \ll \big ((1+|\alpha|)(1+|\beta|)(1+|\gamma|)(1+|\delta|)\big)^{2}q^{\varepsilon}e^{\pi (|\Im
(\beta-\alpha)|+
|\Im(\delta-\gamma)|)/2}\Big (q^{3/4+\eta_1/2}q^{1/2}_1+ q^{3/4+\eta_1}q^{1/4}_1+\frac {q^{1+\eta_0/2+\eta_1/2}}{q^{1/4}_1} \Big ).
  \end{split} 
\end{align*}
\end{proposition}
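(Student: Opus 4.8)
We establish the bound by assembling the estimates of this section and then summing over dyadic blocks. By symmetry we may assume $N \ge M$; together with the range condition $\max(M,N) \ge \min(M,N)q^{1-2\eta_1}$ in \eqref{S12def} and hypothesis \eqref{abcond}, this forces $bN \ge 20aM$, so $am \ne bn$ holds automatically and the constraint $am-bn \ne 0$ may be dropped. The plan is to open $V_2$ via its Mellin transform \eqref{defV}, truncate the $s$-integral at $|\Im s| \le (\log 5q)^2$ using the decay of $\mathcal{G}$, detect the congruence $n \equiv \pm\overline{b}am \pmod{d}$ with additive characters as in \eqref{adddetect}, and then apply the Voronoi summation formula (Lemma \ref{Voronoi}) to the sum over $n$, splitting off a polar term $E_1$ and three dual sums $E_2,E_3,E_4$ built from $S(\pm\overline{b}am,n,r)$ against the Bessel transforms $\widetilde V_{\mathcal{J};\gamma,\delta}$.

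The key structural input, exactly as in \cite{BM15}, is that $r \mid d \mid q$ is large while $N$ is also large: the constraints $q^{2-2\eta_0} \le MN$ and $N \ge Mq^{1-2\eta_1}$ give $N \gg q^{3/2-\eta_0-\eta_1}$. By the decay statement of Lemma \ref{besseldecay} applied to $x^{-(\gamma+\delta)/2}V_2(x)$, the transform $\widetilde V_{\mathcal{J};\gamma,\delta}(nN/r^2)$ decays rapidly once $nN/r^2 \gg |\gamma+\delta|^{2+\varepsilon}q^\varepsilon$, so the dual variable is confined to the very short range $n \ll r^2|\gamma+\delta|^{2+\varepsilon}q^\varepsilon/N$; this is what turns $E_2,E_3,E_4$ into error terms. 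First I would dispose of $E_1$: since $(m,q)=1$ the sum over reduced residues $u \bmod r$ of $e(\pm\overline{b}amu/r)$ collapses to the Ramanujan sum $c_r(1)=\mu(r)$, and the resulting quantity, now independent of $m$, factors through a product over the primes dividing $d$; summing it against $\varphi(d)\mu(q/d)$ over $d \mid q$ and using that $q=q_0^{n_0}$ has only the divisors $q_0^j$, one checks this contribution vanishes identically.

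For the remaining dual sums I would apply Cauchy--Schwarz in $m$ and the large sieve inequality for Kloosterman sums (Lemma \ref{largesieveKS}), choosing an auxiliary divisor $q_1 \mid q$ and taking $s=(r,q_1)$ (so $r/s$ is a power of the odd prime $q_0$, as required), using $\sigma_{\gamma-\delta}(n)n^{-(\gamma-\delta)/2} \ll 1$ on the truncated range together with the elementary bound $r^{11/2}/(r,q_1)^{1/2} \le d^{11/2}/(d,q_1)^{1/2}$ for $r \mid d \mid q$ to carry out the $r$-sum with only a $q^\varepsilon$ loss. This yields \eqref{SboundE2simplified} for $S'_{+,M,N}$ and, by the same argument applied to $\widetilde V_{-\gamma,-\delta,-\alpha,-\beta}$, the analogous \eqref{SboundE2simplified1} for $S'_{-,M,N}$. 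Finally, feeding in the consequences $M \ll \big((1+|\alpha|)(1+|\beta|)(1+|\gamma|)(1+|\delta|)\big)^{1/4}q^{1/2+\eta_1+\varepsilon}$, $M^{1/2}/N^{1/2} \ll q^{-1/2+\eta_1}$ and $N^{-1/2} \ll q^{-3/4+\eta_0/2+\eta_1/2}$ of the constraints in \eqref{S12def}, one obtains \eqref{Sprimebound} and \eqref{Sprimebound1}; summing over the $O((\log q)^2)$ admissible pairs $(M,N)$ and absorbing the logarithms into $q^\varepsilon$ then gives the stated estimate.

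The main obstacle, I expect, is organizing the large-sieve step so that the sum over $r \mid d \mid q$ loses at most $q^\varepsilon$: this requires choosing the auxiliary modulus $q_1$ well and exploiting that the divisors of a prime power are totally ordered, so that the worst contribution $r^{11/2}/(r,q_1)^{1/2}$ is dominated by its value at $r=d$. The exact cancellation of $E_1$ over $d \mid q$ is conceptually the other delicate point, though computationally short, and the Bessel-transform truncation is indispensable for keeping the dual sums short; note that both the $E_1$ cancellation and the favourable lower bound on $N$ rely crucially on the prime-power shape of $q$.
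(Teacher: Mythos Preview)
Your proposal is correct and follows essentially the same route as the paper: open $V_{\alpha,\beta,\gamma,\delta}$ by its integral representation, truncate, detect the congruence additively, apply Voronoi to the long $n$-variable, kill the polar term $E_1$ via the Ramanujan-sum-to-$\mu(r)$ collapse and the vanishing of $\sum_{d\mid q}\varphi(d)\mu(q/d)(\cdots)$ for $q=q_0^{n_0}$, then bound $E_2,E_3,E_4$ by Cauchy--Schwarz plus Lemma~\ref{largesieveKS} after the Bessel-decay truncation, carrying out the $r\mid d$ sum with the monotonicity $r^{11/2}/(r,q_1)^{1/2}\le d^{11/2}/(d,q_1)^{1/2}$. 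One small inaccuracy in your closing commentary: the lower bound $N\gg q^{3/2-\eta_0-\eta_1}$ is a consequence purely of the dyadic range constraints in \eqref{S12def} and has nothing to do with the prime-power shape of $q$; the latter is used only for the $E_1$ cancellation and for the divisibility hypotheses of Lemma~\ref{largesieveKS}.
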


\section{Transforming $S_{+, M,N}$ and $S_{-, M,N}$}
\label{Deltamethod}

\subsection{The $\delta$-method}  For an integers $n$, we define the $\delta$ function by
\begin{align*}
  \delta(n)=
\begin{cases}
 1,  \quad n=1, \\
 0,  \quad \text{otherwise}.
\end{cases}
\end{align*}

In the $\delta$-method introduced by W. Duke, J. B. Friedlander and H. Iwaniec in \cite{DFI94} (see also \cite[Proposition 20.16]{iwakow}), $\delta(n)$ is recast as
\begin{align}
\label{deltamethod}
 \delta (n)=\sum_{\ell = 1}^\infty \sideset{}{^*}\sum_{\substack{k \shortmod \ell}}e\left(\frac{kn}{\ell}\right)\Delta_\ell (n).
\end{align}
Here $\Delta_\ell (u)$ is a smooth function such that $\Delta_\ell (u)=0$ if $|u|\leqslant U \leq Q^2$ and $\ell > 2Q$ for some parameters $U, Q$  to be specified later.  Moreover, $\Delta_\ell$ satisfies the following bound (c.f. \cite[Lemma 2]{DFI94})
\begin{equation}
\label{BoundDelta}
\Delta_\ell (u)\ll \min \left( \frac{1}{Q^2}, \frac{1}{\ell Q}\right) + \min \left( \frac{1}{|u|},\frac{1}{\ell Q}\right).
\end{equation}

Let $\Psi$ be a smooth function $\Psi$ such that $\Psi(0)=1$, $\Psi(u)=0$ for $|u|\geqslant U$ and $\Psi^{(i)}\ll U^{-i}$ for some $U$ satisfying $U\leqslant Q^2$. We now write the congruence condition $ma \pm nb \equiv 0 \pmod d$ in \eqref{Splusdecomp} as $am \pm bn-hd$ for $h\neq 0$, as $am \neq bn$. We now keep track of the condition $am \pm bn-hd$ is not too large by attaching the function $\Psi$. It follows that $S_{+, M,N}$ given in \eqref{Splusdecomp} may be rewritten as, using \eqref{deltamethod},
\begin{align}
\label{Splusdelta}
\begin{split}
S_{+, M,N}  =& \frac{1}{\sqrt{MN}}\sum_{d \mid q} \varphi(d)\mu\left(\frac{q}{d}\right) \sum_{\ell\leqslant 2Q}\sum_{h\neq 0}\ \sumstar_{\substack{k \shortmod \ell}}e\left(\frac{-khd}{\ell}\right)  \sum_{n=1}^\infty\sum_{m=1}^\infty \sigma_{\alpha,\beta}(m)\sigma_{\gamma,\delta}(n)e\left(\frac{k(am \pm bn)}{\ell}\right)E^{\pm}(m,n,\ell),
\end{split}
\end{align}
where
\begin{equation}
\label{DefinitionEpm}
E^{\pm}(x,y,\ell):=W_1\left(\frac{x}{M}\right)W_2\left(\frac{y}{N}\right)\Psi(ax\pm by-hd)V_{\alpha,\beta,\gamma,\delta}\left(\frac{xy}{q^2}\right)\Delta_\ell (ax\pm by-hd) \quad \mbox{with} \quad
W_j \left(x \right)= x^{-1/2}v_j(x), \; j=1,2.
\end{equation}
It follows from \eqref{Vbounds} that the functions $W_1$ and $W_2$ are compactly support in $[1, 2]$ and their $j$-th derivatives are
\begin{equation*}
 \ll_j q^{j\varepsilon} .
\end{equation*}  

  Let $a' =a/(a, \ell ), \ell_a=\ell/(a, \ell )$ and $b' =b/(b, \ell ), \ell_b=\ell/(b, \ell )$. We apply the Voronoi
 summation formula, Lemma \ref{Voronoi}, to the sums over $m$ and $n$ in \eqref{Splusdelta}. Thus, $S_{+, M,N}$ can be decomposed as a main term plus an error term, such that
\begin{align}
\label{SplusVS}
\begin{split}
S_{+, M,N}  =& S^0_{+, M,N}  +E_{+, M,N},
\end{split}
\end{align}
  where
\begin{align}
\label{Splusmain}
S^0_{+, M,N} = S^{\pm;-+-+}_{+, M,N}+S^{\pm;-++-}_{+, M,N}+S^{\pm;+--+}_{+, M,N}+S^{\pm;+-+-}_{+, M,N} \quad \mbox{and} \quad E_{+, M,N}=\sum^{21}_{j=1} E_{j,+, M, N},
\end{align}
 Here
\begin{align*}
\begin{split}
& S^{\pm;-+-+}_{+, M,N}\\
& \hspace*{0.8cm} = \frac{\zeta(1-\alpha+\beta)\zeta(1-\gamma+\delta)}{\sqrt{MN}}\sum_{d \mid q} \varphi(d)\mu\left(\frac{q}{d}\right) \sum_{\ell\leqslant 2Q}\sum_{h\neq 0}\frac {c_{\ell}(hd)(a,\ell)^{1-\alpha+\beta}(b,\ell)^{1-\gamma+\delta}}{\ell^{2-\alpha+\beta-\gamma+\delta}}
\int\limits_0^{\infty}\int\limits_0^{\infty} x^{-\alpha}y^{-\gamma}E^{\pm}(x,y,\ell) \dif x \dif y,
\end{split}
\end{align*}
 where $c_{\ell}(hd)$ is the Ramanujan sum defined in \eqref{RSdef} and
\begin{align}
\label{E1exp}
\begin{split}
 E_{1,+, M, N} =& \frac {4\pi^2 \cos\Big (\frac {\pi}{2}(\alpha-\beta)\Big )\cos\Big (\frac {\pi}{2}(\delta-\gamma)\Big )}{M^{1/2+\beta}N^{1/2+\delta}}\sum_{d \mid q} \varphi(d)\mu\left(\frac{q}{d}\right) \sum_{\ell\leqslant 2Q}\frac 1{\ell_a\ell_b} \sum_{h\neq 0}\ \sumstar_{\substack{k \shortmod \ell}}e\left(\frac{-khd}{\ell}\right) \\
& \hspace*{1.5cm} \times \sum_{n=1}^\infty\sum_{m=1}^\infty \frac {\sigma_{\alpha,\beta}(m)}{m^{(\alpha-\beta)/2}}\frac {\sigma_{\gamma,\delta}(n)}{n^{(\gamma-\delta)/2}}e\left(\frac{-\overline{ka'}m}{\ell_a} \mp \frac{\overline{kb'}n}{\ell_b}  \right)I^{\pm}(m,n,\ell),
\end{split}
\end{align}
  where
\begin{align*}
\begin{split}
 I^{\pm}(m,n,\ell)=\int\limits^{\infty}_0 \int\limits^{\infty}_0x^{(\beta-\alpha)/2}y^{(\delta-\gamma)/2}Y_{\beta-\alpha} \Big(\frac {4\pi \sqrt{mx}}{\ell_a} \Big) Y_{\delta-\gamma} \Big(\frac {4\pi \sqrt{ny}}{\ell_b} \Big)E^{\pm}_1(x,y,\ell) \dif x \dif y, 
\end{split}
\end{align*}
 and
\begin{align*}
\begin{split}
 E^{\pm}_1(x,y,\ell)=\Delta_\ell (ax\pm by-hd)W_{1,\beta}\left(\frac{x}{M}\right)W_{2,\delta}\left(\frac{y}{N}\right)\Phi(ax\pm by-hd)V_{\alpha,\beta,\gamma,\delta}\left(\frac{xy}{q^2}\right),
\end{split}
\end{align*}
  with 
\begin{align}
\label{W1betadef}
\begin{split}
W_{1,\beta} \left(x \right)= x^{-1/2-\beta}v_1(x), \quad W_{2,\delta} \left(x \right)= x^{-1/2-\delta}v_2(x).
 \end{split} 
\end{align}

The other main and error terms admit similar expressions.  Moreover, in a complete analogous way, we obtain
\begin{align}
\label{SminusVS}
\begin{split}
S_{-, M,N}  =& S^0_{-, M,N}  +E_{-, M,N}.
\end{split}
\end{align}

\section{Estimating $E_{+, M,N}+E_{-, M,N}$}
\label{Esumest}

As the treatments are similar, it suffices to focus on one of the error terms, say $E_{1,+, M, N}$. 
We now set
\begin{align}
\label{UQvalue}
\begin{split}
 Q=abN^{1/2+\varepsilon}, \quad & U=Q^2. 
\end{split}
\end{align}

Tthe following result bounds the size of $E_{1,+, M, N}$, under certain conditions, analogous to \cite[Lemma 4.3]{Z2019}.
\begin{lemma}
\label{LemmeRestrictionQ} 
 With the notation as above and $Q^-=N^{1/2-\varepsilon}$, we have
\begin{enumerate}
\item[$(a)$] The $\ell$-sum contribution to $E_{1,+, M, N}$ is very small ($\ll_C q^{-C}$ for any $C>0$) unless
$$Q^-\leqslant \ell \leqslant 2Q.$$
\item[$(b)$] If $Q^-\leqslant\ell\leqslant 2Q$, then the $\ell$-sum contribution to the expression $E_{1,+, M, N}$ is very small ($\ll_C q^{-C}$ for any $C>0$) unless
$$n\leqslant\mathcal{N}_0:= \frac{(1+|\beta|)Q^{2+\varepsilon}}{N} , \  m\leqslant \mathcal{M}_0:= \frac{(1+|\delta|)Q^{2+\varepsilon}}{M}.$$
\end{enumerate}
\end{lemma}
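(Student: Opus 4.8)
The plan is to read off both parts of the lemma from the rapid decay of the Bessel transform recorded in Lemma~\ref{besseldecay}, applied in turn to the $x$-integral and to the $y$-integral making up $I^{\pm}(m,n,\ell)$ in \eqref{E1exp}. First I would dispose of the remaining summation variables: besides $m,n,\ell$, the sum $E_{1,+,M,N}$ runs over $d\mid q$ (only $\ll\log q$ relevant), $k\shortmod\ell$ (at most $\ell\leq 2Q$ of them) and $h\neq 0$, which by $\Psi(0)=1$, the support of the cut-off $\Psi(ax\pm by-hd)$ in $E_1^{\pm}$, the ranges $x\asymp M$, $y\asymp N$ and \eqref{UQvalue}, is confined to $1\leq|h|\ll Q^{2+\varepsilon}/d$. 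Combined with the trivial bounds $|c_\ell(hd)|\leq\ell$, $\sigma_{\alpha,\beta}(m)\ll m^{\varepsilon}$, $\sigma_{\gamma,\delta}(n)\ll n^{\varepsilon}$, $(\ell_a\ell_b)^{-1}\leq 1$ and the factor $M^{-1/2-\beta}N^{-1/2-\delta}\ll 1$, this shows that any estimate of the shape ``$I^{\pm}(m,n,\ell)\ll_C X^{-C}$ for all $C>0$'' with $X$ a fixed positive power of $q$, uniform in the other variables and summable over $m,n$, produces a contribution $\ll_C q^{-C}$, which is the meaning of the asserted ``very small'' bounds. We may also assume $N\geq M$ (the case $M\geq N$ being symmetric), so that $N\geq q^{1-\eta_0}$ in the range \eqref{S12def}.

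Next I would record the derivative bounds for the smooth weights in $I^{\pm}$. Substituting $x=Mu$, $y=Nw$ and differentiating under the integral sign in \eqref{E1exp}, one checks that each $u$- or $w$-derivative of the cut-off $\Psi(ax\pm by-hd)$ costs $\ll aM/U+bN/U\ll 1$, by $\Psi^{(i)}\ll U^{-i}$ and \eqref{UQvalue} (here $aM,bN\ll U$ follows from \eqref{scondition}), and likewise for $\Delta_\ell(ax\pm by-hd)$ via \eqref{BoundDelta} and its companion derivative bound in \cite[Lemma~2]{DFI94}; each derivative of $V_{\alpha,\beta,\gamma,\delta}(xy/q^2)$ costs $\ll q^{\varepsilon}$ by \eqref{W}, the range \eqref{S12def} and \eqref{scondition}; and by \eqref{Vbounds} and \eqref{W1betadef} each $u$-derivative of $x^{(\beta-\alpha)/2}W_{1,\beta}(x/M)$ costs $\ll(1+|\beta|)q^{\varepsilon}$ while each $w$-derivative of $y^{(\delta-\gamma)/2}W_{2,\delta}(y/N)$ costs $\ll(1+|\delta|)q^{\varepsilon}$, using $\Im(\beta-\alpha),\Im(\delta-\gamma)=O(1)$. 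Hence, for fixed $x\asymp M$, the smooth factor in the inner $w$-integral multiplying $Y_{\delta-\gamma}\bigl(4\pi\sqrt{nNw}/\ell_b\bigr)$ is supported in $w\asymp 1$ with $j$-th derivative $\ll\bigl((1+|\delta|)q^{\varepsilon}\bigr)^{j}$; and, once this integral has been carried out, the resulting function of $x$ multiplied by $x^{(\beta-\alpha)/2}W_{1,\beta}(x/M)$ is supported in $x\asymp M$ with $j$-th derivative in $u$ of size $\ll\bigl((1+|\beta|)q^{\varepsilon}\bigr)^{j}$.

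For part $(a)$ I would apply Lemma~\ref{besseldecay} to the inner $y$-integral alone: after $y=Nw$ it is, up to a harmless power of $N$, of the form $\int_0^{\infty}W(w)Y_{\delta-\gamma}\bigl(4\pi\sqrt{w\,y_0}\bigr)\,\dif w$ with $y_0=nN/\ell_b^{2}$ and $W$ as in the previous paragraph, so by Lemma~\ref{besseldecay} it is $\ll_C y_0^{-C}$ for all $C>0$ once $y_0$ exceeds the decay threshold $\bigl((1+|\delta|)q^{\varepsilon}\bigr)^{2+\varepsilon}$. Since $n\geq 1$ and $\ell_b=\ell/(b,\ell)\leq\ell$, whenever $\ell<Q^{-}=N^{1/2-\varepsilon}$ one has $y_0\geq N/\ell_b^{2}>N^{2\varepsilon}$, which, using \eqref{scondition} to bound $1+|\delta|$ by a small fixed power of $q$ together with $N\geq q^{1-\eta_0}$, exceeds that threshold; hence the whole $\ell$-contribution is $\ll_C q^{-C}$, proving $(a)$. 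For part $(b)$, in the range $Q^{-}\leq\ell\leq 2Q$ the same input forces $y_0\ll\bigl((1+|\delta|)q^{\varepsilon}\bigr)^{2+\varepsilon}$, whence $n\leq\mathcal{N}_0$ because $\ell_b\leq 2Q$; having restricted $n$, I would then apply Lemma~\ref{besseldecay} to the remaining $x$-integral against $Y_{\beta-\alpha}\bigl(4\pi\sqrt{mMu}/\ell_a\bigr)$ with the $\bigl((1+|\beta|)q^{\varepsilon}\bigr)$-smoothness recorded above, which makes the full expression $\ll_C q^{-C}$ unless $mM/\ell_a^{2}\ll\bigl((1+|\beta|)q^{\varepsilon}\bigr)^{2+\varepsilon}$, i.e.\ (as $\ell_a\leq 2Q$) unless $m\leq\mathcal{M}_0$. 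This gives $(b)$.

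The remaining error terms $E_{j,+,M,N}$ and their $S_{-,M,N}$ analogues differ from $E_{1,+,M,N}$ only in that one or both $Y$-transforms is replaced by a $J$- or $K$-transform (with unconditional decay in the $K$-case), or in that Voronoi summation has been applied to only one of the two variables, and each is handled in exactly the same way, Lemma~\ref{besseldecay} being uniform over $\mathcal{J}\in\{J,Y,K\}$. The one genuinely delicate point I foresee is the derivative bookkeeping of the second paragraph: one must confirm that the effective smoothness scales of the $x$- and $y$-integrands are no larger than $(1+|\beta|)q^{\varepsilon}$ and $(1+|\delta|)q^{\varepsilon}$, the subtlety being that the $\delta$-symbol weight $\Delta_\ell$ and the cut-off $\Psi$ — whose derivatives scale like inverse powers of $U$ — must not degrade these scales. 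This is precisely why $U=Q^{2}$ with $Q=abN^{1/2+\varepsilon}$ is taken so large in \eqref{UQvalue}, and it is here, and in the comparison of $N\geq q^{1-\eta_0}$ against $1+|\beta|$ and $1+|\delta|$, that the size hypothesis \eqref{scondition} is used.
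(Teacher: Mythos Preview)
Your overall strategy---reading off both parts from the decay of the Bessel transform in Lemma~\ref{besseldecay}---is exactly the mechanism the paper uses, the paper simply carries out the integration by parts explicitly via the recursion $(z^{v}Y_v(z))'=z^{v}Y_{v-1}(z)$ rather than quoting the packaged lemma. Part~(b) of your argument is essentially correct: once $\ell\geq Q^{-}$ the $\Delta_\ell$ contribution to the $w$-smoothness is $bN/(\ell Q)\leq bN/(Q^{-}Q)=1/a\leq 1$, so the effective smoothness really is $(1+|\delta|)q^{\varepsilon}$ as you claim, and Lemma~\ref{besseldecay} gives the stated truncations in $n$ and $m$.

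The gap is in part~(a). Your claim that ``likewise for $\Delta_\ell(ax\pm by-hd)$'' each $w$-derivative costs $\ll 1$ is false when $\ell$ is small. The correct derivative bound is $(\Delta_\ell)^{(j)}\ll(\ell Q)^{-j-1}$ (this is \eqref{deltader}, not a bound in terms of $U$), so after $y=Nw$ each $w$-derivative of $\Delta_\ell$ costs $bN/(\ell Q)$, which for $\ell=1$ is of size $N^{1/2-\varepsilon}/a$---far from bounded. Thus the smoothness parameter you feed into Lemma~\ref{besseldecay} is not $(1+|\delta|)q^{\varepsilon}$ but
\[
Q_{\mathrm{eff}}=\max\Bigl((1+|\delta|)q^{\varepsilon},\ \frac{bN}{\ell Q}\Bigr),
\]
and your threshold comparison ``$y_0>N^{2\varepsilon}$ exceeds $((1+|\delta|)q^{\varepsilon})^{2+\varepsilon}$'' uses the wrong threshold. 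The paper handles this by keeping both terms in the derivative bound (see \eqref{Eder}--\eqref{Derbound}) and, in the regime where $b/(\ell Q)$ dominates, iterating with that larger scale; the resulting factor per iteration is $\bigl(\ell_b\, bN^{1/2}/(n^{1/2}\ell Q)\bigr)\leq 1/(n^{1/2}N^{\varepsilon})$, which is small and summable.

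Your argument is easily repaired along the same lines: with the corrected $Q_{\mathrm{eff}}=bN/(\ell Q)$ in the small-$\ell$ range one has
\[
\frac{y_0^{1/2}}{Q_{\mathrm{eff}}}=\frac{\sqrt{nN}}{\ell_b}\cdot\frac{\ell Q}{bN}=\frac{(b,\ell)\sqrt{n}\,Q}{b\sqrt{N}}\geq a\sqrt{n}\,N^{\varepsilon},
\]
so Lemma~\ref{besseldecay} still produces arbitrary polynomial decay in $N^{\varepsilon}$, uniformly summable in $m,n,\ell,h$. You should also note, as the paper does implicitly, that the assumption $N\geq M$ you invoke is not needed here: the two variables play symmetric roles and the same argument applied to the $x$-integral covers the case $\ell<M^{1/2-\varepsilon}$.
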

\begin{proof}
  Our proof follows from those of \cite[Lemmas 4.1, 4.2]{Aryan15}. We evaluate the integral in \eqref{E1exp} by a change of variables to obtain 
\begin{align*}
\begin{split}
 I^{\pm} & (m,n,\ell) \\
 & =
4\Big( \frac {l^2_a}{(4\pi)^2m} \Big)^{1-(\alpha+\beta)/2} \Big(\frac {l^2_a}{(4\pi)^2n}\Big)^{1-(\alpha+\delta)/2} \int\limits^{\infty}_0 \int\limits^{\infty}_0u^{1+\beta-\alpha}v^{1+\delta-\gamma}Y_{\beta-\alpha}(u)Y_{\delta-\gamma}(v)E^{\pm}_1(\frac {u^2l^2_a}{(4\pi)^2m},\frac {v^2l^2_b}{(4\pi)^2n},\ell) \dif u \dif v. 
\end{split}
\end{align*}

  Now applying the recursive formula $\displaystyle{(z^v Y_{v}(z))^{\prime}=z^v Y_{v-1}(z)}$  (see \cite[Lemma C.1]{KMV02}) and integration by parts, we see that for integers $i,j \geq 0$,
\begin{align}
\label{Ievaluation1}
\begin{split}
 I^{\pm}(m,n,\ell)\asymp
\frac {l^{2i+2}_al^{2j+2}_b}{m^{i+1}n^{j+1}} \times \int\limits^{\infty}_0 \int\limits^{\infty}_0u^{i+1+\beta-\alpha}v^{j+1+\delta-\gamma+}Y_{i+\beta-\alpha}(u)Y_{j+\delta-\gamma}(v)\Big(E^{\pm}_1(x,y,\ell)\Big )^{(i,j,0)}\Big |_{\big(\frac {u^2l^2_a}{(4\pi)^2m},\frac {v^2l^2_b}{(4\pi)^2n},\ell \big)} \dif u \dif v. 
\end{split}
\end{align}

It follows from \cite[(11), (13)]{DFI94} that
\begin{align}
\label{deltader}
\begin{split}
 & (\Delta_{\ell}(u))^{(j)} \ll \Big( \frac {1}{\ell Q} \Big)^{j+1}.
\end{split}
\end{align}

Moreover, we infer from \eqref{Vbounds} that the functions $W_{1,\beta}, W_{2,\delta}$ are compactly support in $[1, 2]$ whose derivatives satisfy
\begin{equation}
\label{W1derivative}
  W^{(j)}_{1,\beta}(x) \ll ((1+|\beta|)q^{\varepsilon})^{j} \quad \mbox{and} \quad W^{(j)}_{2,\delta} \ll_j ((1+|\delta|)q^{\varepsilon})^{j} .
\end{equation}  

Thus, from \eqref{DefinitionEpm}, \eqref{deltader} and \eqref{W1derivative},
\begin{align}
\label{Eder}
\begin{split}
 & \Big(E^{\pm}_1(x,y,\ell)\Big )^{(i,j,0)}\Big |_{(\frac {u^2l^2_a}{(4\pi)^2m},\frac {v^2l^2_b}{(4\pi)^2n},\ell)}\ll    \frac {1}{\ell Q}\Big (\frac {(1+|\beta|)q^{\varepsilon}}{M}+ \frac {N}{q^2}+\frac {a}{U}+\frac {a}{\ell Q}\Big )^{i} \Big (\frac {(1+|\delta|)q^{\varepsilon}}{N}+ \frac {M}{q^2}+\frac {b}{U}+\frac {b}{\ell Q}\Big )^{j}.
\end{split}
\end{align}

  As $\ell \leq 2Q$ and $MN \ll q^{2+\varepsilon}$, we get
\begin{align}
\label{Derbound}
\begin{split}
  \frac {(1+|\delta|)q^{\varepsilon}}{N}+ \frac {M}{q^2}+\frac {b}{U}+\frac {b}{\ell Q} \ll & \frac {(1+|\delta|)q^{\varepsilon}}{N}+\frac {b}{\ell Q} \quad \mbox{and}  \\
  \frac {(1+|\beta|)q^{\varepsilon}}{M}+ \frac {N}{q^2}+\frac {a}{U}+\frac {a}{\ell Q} \ll &  \frac {(1+|\beta|)q^{\varepsilon}}{M}+\frac {a}{\ell Q}.
\end{split}
\end{align}

From \cite[(6.7)]{BM15}, for integers $j \geq 0$, 
\begin{align}
\label{Ymbound}
\begin{split}
 Y^{(j)}_{v}(u) \ll \frac{1}{\sqrt{u}}.
\end{split}
\end{align}
Apply \eqref{Ievaluation1}, \eqref{Eder} -- \eqref{Ymbound} in \eqref{Ievaluation1} reveals that
\begin{align}
\label{Ipmbound}
\begin{split}
 I^{\pm}(m, n, \ell) \ll & \frac {l^{2i+2}_al^{2j+2}_b}{m^{i+1}n^{j+1}(\ell Q)^{2}} \Big(\frac {(1+|\beta|)q^{\varepsilon}}{M}+\frac {a}{\ell Q}\Big)^i \Big(\frac {(1+|\delta|)q^{\varepsilon}}{N}+\frac {b}{\ell Q}\Big)^j \int\limits_{\  \frac{4\pi\sqrt{mM}}{\ell_a}}^{_{\frac{4\pi\sqrt{2mM}}{\ell_a}}} \int\limits_{\ \frac{ 4\pi\sqrt{nN}}{\ell_b}}^{_{\frac{4\pi\sqrt{2nN}}{\ell_b}}} u^{i+1/2}v^{j+1/2} \dif u \dif v  \\ 
 \ll_{i,j} & \frac{l^{i+1/2}_al^{j+1/2}_bM^{i/2+3/4}N^{j/2+3/4}}{m^{i/2+1/4}n^{j/2+1/4}(\ell Q)^{2}} \Big(\frac {(1+|\beta|)q^{\varepsilon}}{M}+\frac {a}{\ell Q}\Big)^i \Big(\frac {(1+|\delta|)q^{\varepsilon}}{N}+\frac {b}{\ell Q}\Big)^j.
 \end{split}
\end{align}

  Suppose that either
\begin{align}
\label{conditionllarge}
\begin{split}
 \frac {(1+|\delta|)q^{\varepsilon}}{N}+\frac {b}{\ell Q} \ll \frac {b}{\ell Q} \quad \text{or} \quad \frac {(1+|\beta|)q^{\varepsilon}}{M}+\frac {a}{\ell Q} \ll \frac {a}{\ell Q}.
 \end{split}
\end{align}
  Without loss of generality, we consider the case that the first one of the above inequalities holds. This implies that
\begin{align*}
\begin{split}
 \frac {(1+|\delta|)q^{\varepsilon}}{N}+\frac {b}{\ell Q} \ll \frac {ab}{\ell Q}. 
\end{split}
\end{align*}

Hence, from \eqref{Ipmbound} and the above,
\begin{align*}
\begin{split}
 I^{\pm}(m, n, \ell) 
 \ll_{i,j} & \frac{l^{i+1/2}_al^{j+1/2}_b(ab)^{j}M^{i/2+3/4}N^{j/2+3/4}}{m^{i/2+1/4}n^{j/2+1/4}(\ell Q)^{i+j+2}} \Big( \frac {(1+|\beta|)q^{\varepsilon}}{M}+\frac {a}{\ell Q} \Big)^i.
 \end{split}
\end{align*}

Setting $i=1$ above and applying the bound $\ell_b \leq \ell$ lead to
\begin{align*}
\begin{split}
 I^{\pm}(m, n, \ell) & \ll \frac{l^{3/2}_a(ab)^{j}M^{5/4}N^{j/2+3/4}}{m^{3/4}n^{j/2+1/4}\ell^{3/2}Q^{j+2}}\Big(\frac {(1+|\beta|)q^{\varepsilon}}{M}+\frac {a}{\ell Q}\Big).
 \end{split}
\end{align*}

We apply the above in \eqref{E1exp} and sum trivially to see that the corresponding $\ell$-sum contribution to $E_{1,+, M, N}$ is very small. Similar consideration leads to the same conclusion the second inequality in \eqref{conditionllarge} holds. Note in particular this is the case $\ell < N^{1/2-\varepsilon}/(1+|\delta|)$. This establishes part (a) of the lemma. \newline

  Suppose now that the inequalities in \eqref{conditionllarge} fail, i.e.
\begin{align*}
\begin{split}
 \frac {(1+|\delta|)q^{\varepsilon}}{N}+\frac {b}{\ell Q} \ll \frac {(1+|\delta|)q^{\varepsilon}}{N} \quad \text{and} \quad \frac {(1+|\beta|)q^{\varepsilon}}{M}+\frac {a}{\ell Q} \ll \frac {(1+|\beta|)q^{\varepsilon}}{M}.
 \end{split}
\end{align*}

   Then we deduce from \eqref{Ipmbound} and $\ell_a, \ell_b \leq \ell \leq 2Q$ that
\begin{align*}
\begin{split}
 I^{\pm}(m, n, \ell) 
\ll_{i,j} & \frac{(2Q)^{i+1/2}(2Q)^{j+1/2}M^{i/2+3/4}N^{j/2+3/4}}{m^{i/2+1/4}n^{j/2+1/4}(\ell Q)^{2}}\Big(\frac {(1+|\beta|)q^{\varepsilon}}{M}\Big)^i\Big(\frac {(1+|\delta|)q^{\varepsilon}}{N}\Big)^j \\
 \ll_{i,j} & \frac{QM^{3/4}N^{3/4}}{m^{1/4}n^{1/4}(\ell Q)^{2}}\Big(\frac {4Q^2(1+|\beta|)q^{\varepsilon}}{mM}\Big)^{i/2}\Big(\frac {4Q^2(1+|\delta|)q^{\varepsilon}}{nN}\Big)^{j/2}.  
 \end{split}
\end{align*}

   It follows from the above that the corresponding $\ell$-sum contribution to $E_{1,+, M, N}$ is negligible if $m > Q^{2+\varepsilon}(1+|\beta|)/M$ or $n> Q^{2+\varepsilon}(1+|\delta|)/N$. This implies part (b) of the lemma and hence completes the proof. 
\end{proof}

As in \cite[Section 4.2.3]{Z2019}, to estimate $E_{1,+, M, N}$, it suffices, by Lemma \ref{LemmeRestrictionQ}, to restrict the $\ell$-sum in \eqref{E1exp} to the range $\ell \in [Q^-,2Q]$.  Applying another partition of unity to the interval $[Q^-,2Q]$, it is enough to estimate sums of the shape (which is analogous to \cite[(4.16)]{Z2019})
\begin{align*}
\begin{split}
 & \frac {\cos\Big (\frac {\pi}{2}(\alpha-\beta)\Big )\cos\Big (\frac {\pi}{2}(\delta-\gamma)\Big )}{\mathcal{Q} M^{1/2+\beta}N^{1/2+\delta}}\sum_{d_1 \mid a', d_2 \mid b'}\sum_{d \mid q} \varphi(d)\mu\left(\frac{q}{d}\right)\sum_{(c, a'b')=1}c^{-1}\vartheta\left(\frac{cd_1d_2}{\mathcal{Q}}\right)  \\
& \hspace*{2cm} \times \sum_{n=1}^\infty\sum_{m=1}^\infty \frac {\sigma_{\alpha,\beta}(m)}{m^{(\alpha-\beta)/2}}\frac {\sigma_{\gamma,\delta}(n)}{n^{(\gamma-\delta)/2}}S(hd,d_2\overline{b'}n-d_1\overline{a'}m,cd_1d_2)I^{\pm} (m,n,cd_1d_2),
\end{split}
\end{align*} 
 where $Q^-\leqslant \mathcal{Q}\leqslant Q$ and $\vartheta$ is a smooth and compactly supported function on $\mathbb{R}_{>0}$ such that $\vartheta^{(j)}\ll_j 1$ for all $j\geqslant 0$. Here we note that, $\overline{a'}$ (resp $\overline{b'}$) need to be understood modulo $cd_2$ (resp $cd_1$). \newline

  We next factorize in an unique way $d_i=d_i^\ast d_i'$ with $(d_1^\ast , a')=(d_2^\ast , b')=1$ and $d_1' | (a')^{\infty}, d_2' | (b')^{\infty}$. Now as $(cd_1^\ast d_2^\ast,d_1'd_2')=1$, we may apply the twisted multiplicativity of the Kloosterman sums as in \cite[Section 4.2.3]{Z2019} by defining $v= d_1'd_2'$ and 
\begin{equation}
\label{Shat}
\hat{S}_v(\chi,m,n,a,b,hd):= \sumstar_{\substack{y \shortmod v }}\overline{\chi}(y)S(hd\overline{y},(d_2\overline{b'}n-d_1\overline{a'}m)\overline{y},v),
\end{equation}
  where the inverse of $a'$ (resp $b'$) are taken to modulo $d_2'$ (resp $d_1'$). \newline

  Using further partitions of unity, we are led to to estimate the sum
\begin{align}
\label{E1sumallres}
\begin{split}
 & \frac {\cos\Big (\frac {\pi}{2}(\alpha-\beta)\Big )\cos\Big (\frac {\pi}{2}(\delta-\gamma)\Big )}{\mathcal{Q} M^{1/2+\beta}N^{1/2+\delta}}\sum_{d_1 \mid a', d_2 \mid b'}\frac 1{\varphi(v)}\sum_{d \mid q} \varphi(d)\mu\left(\frac{q}{d}\right)\mathscr{D}(\mathcal{N},\mathcal{M},G,H;d,\chi),
\end{split}
\end{align} 
 where
\begin{align}
\label{SumShape3}
\begin{split}
\mathscr{D}(\mathcal{N},\mathcal{M},G,H;d,\chi):=& \sum_{h\asymp H}\sum_{|g|\asymp G}\mathop{\sum\sum}_{\substack{d_2b'n-d_1a'm=g \\ n\asymp \mathcal{N}, m\asymp \mathcal{M}}}\frac {\sigma_{\alpha,\beta}(m)}{m^{(\alpha-\beta)/2}}\frac {\sigma_{\gamma,\delta}(n)}{n^{(\gamma-\delta)/2}}\hat{S}_v(\overline{\chi},m,n,a,b,hd) \\ 
& \hspace*{1cm} \times  \sum_{(c, a'b')=1}\overline{\chi}(c)\frac{S(hd,\overline{v^2a'b'} g,cd_1^\ast d_2^\ast)}{c}\vartheta\left(\frac{cd_1d_2}{\mathcal{Q}}\right) F(m,n,|g|,h)I^{\pm} (m,n,cd_1d_2) \\ 
:= & \ \mathscr{D}^++\mathscr{D}^-+\mathscr{D}^0, 
\end{split}
\end{align}
and $1\leqslant \mathcal{N}\leqslant \mathcal{N}_0$, $1\leqslant\mathcal{M}\leqslant \mathcal{M}_0$, $1\leqslant H\leqslant abN/d$, and where $\mathscr{D}^0$ (respectively $\mathscr{D}^+$, $\mathscr{D}^-$) denotes the contribution of $g=0$ (respectively $g>0$, $g<0$). Here $F$ is a smooth and compactly supported function on $[\mathcal{N},2\mathcal{N}]\times [\mathcal{M},2\mathcal{M}]\times [B,2B]\times [H,2H]$ satisfying 
$$F^{(i,j,k,p)}\ll \mathcal{N}^{-i}\mathcal{M}^{-j}G^{-k}H^{-p}.$$ 
The size of $G$ depends on the sign of $d_2b'n-d_1a'm=g$. If $g>0$, then $G\leqslant d_2b'\mathcal{N}\leqslant (ab)^2\mathcal{N}$ while for $g<0$, $G\leqslant (ab)^2\mathcal{M}$. \newline

   We evaluate $\mathscr{D}^0$  as did in \cite[(4.21)]{Z2019} to see that 
\begin{align}
\label{BoundD0} 
\begin{split}
\mathscr{D}^0\ll & q^\varepsilon \frac{(NM\mathcal{N})^{3/4}v\sqrt{d_1^\ast d_2^\ast}}{\mathcal{M}^{1/4}Q}\sum_{h\asymp H}\sum_{c\leqslant\frac{\mathcal{Q}}{d_1d_2}}\frac{(h,v)(hd,cd_1^\ast d_2^\ast)^{1/2}}{\sqrt{c}}  \\ 
= & q^\varepsilon \frac{(NM\mathcal{N})^{3/4}v\sqrt{d_1^\ast d_2^\ast}}{\mathcal{M}^{1/4}Q}\sum_{f_1\mid v, f_2\mid cd_1^\ast d_2^\ast}\sum_{c\leqslant\frac{\mathcal{Q}}{d_1d_2}}\frac{f_1f^{1/2}_2}{\sqrt{c}}\sum_{\substack{h\asymp H \\ f_1\mid h, f_2\mid hd}}1.
\end{split}
\end{align}
  Note that $(v, cd_1^\ast d_2^\ast)=1$, so that $(f_1, f_2)=1$ and the condition $f_1\mid h, f_2\mid hd$ then implies that $f_1f_2/(f_2,d) \mid h$. Applying this in \eqref{BoundD0},
\begin{align*}
\begin{split}
\mathscr{D}^0\ll & q^\varepsilon \frac{(NM\mathcal{N})^{3/4}v\sqrt{d_1^\ast d_2^\ast}}{\mathcal{M}^{1/4}Q}\sum_{f_1\mid v, f_2\mid cd_1^\ast d_2^\ast}\sum_{c\leqslant\frac{\mathcal{Q}}{d_1d_2}}\frac{(f_2, d)}{\sqrt{f_2c}} \ll q^\varepsilon \frac{(NM\mathcal{N})^{3/4}v\sqrt{d_1^\ast d_2^\ast}H}{\mathcal{M}^{1/4}Q}\sum_{f_2\mid cd_1^\ast d_2^\ast}\sum_{c\leqslant\frac{\mathcal{Q}}{d_1d_2}}\frac{(f_2, d)}{\sqrt{f_2c}}. 
\end{split}
\end{align*}

  Note also that we have $(d, d_1^\ast d_2^\ast)=1$. Thus
\begin{align}
\label{BoundD02} 
\begin{split}
\sum_{f_2\mid cd_1^\ast d_2^\ast}\sum_{c\leqslant\frac{\mathcal{Q}}{d_1d_2}}\frac{(f_2, d)}{\sqrt{f_2c}}=& \sum_{\substack{f_3\mid d \\ (f_4,d)=1 \\ f_4 \leq \mathcal{Q} }}\frac {f_3}{\sqrt{f_3f_4}}\sum_{\substack{c\leqslant\frac{\mathcal{Q}}{d_1d_2} \\ f_3f_4 \mid cd_1^\ast d_2^\ast}}\frac{1}{\sqrt{c}}=\sum_{\substack{f_3\mid d \\ (f_4,d)=1\\ f_4 \leq \mathcal{Q}}}\frac {f_3}{\sqrt{f_3f_4}}\sum_{\substack{c\leqslant\frac{\mathcal{Q}}{d_1d_2} \\ f_3f_4/(f_4, d_1^\ast d_2^\ast)\mid c}}\frac{1}{\sqrt{c}}\\
 \ll & q^{\varepsilon} \sqrt{\frac{\mathcal{Q}}{d_1d_2}}  \sum_{\substack{(f_4,d)=1\\ f_4 \leq \mathcal{Q}}}\frac {(f_4, d_1^\ast d_2^\ast)}{f_4}  \ll q^{\varepsilon} \sqrt{\frac{\mathcal{Q}}{d_1d_2}} \sum_{f_5\mid d_1^\ast d_2^\ast}\sum_{\substack{f_6 \leq \mathcal{Q}}}\frac {1}{f_6} \ll q^{\varepsilon} \sqrt{\frac{\mathcal{Q}}{d_1d_2}} .
\end{split}
\end{align}

Applying \eqref{BoundD02} in \eqref{BoundD0}, together with the bounds $H\leq abN/d$, $\mathcal{N}\leq \mathcal{N}_0$, 
\begin{align}
\label{BoundD03} 
\begin{split}
\mathscr{D}^0 \ll & q^{\varepsilon} \frac{(NM\mathcal{N})^{3/4}v(d_1^\ast d_2^\ast)^{1/2}H\mathcal{Q}^{1/2}}{(d_1d_2)^{1/2}\mathcal{M}^{1/4}Q} \ll  q^{\varepsilon}abd^{-1}(1+|\beta|)^{3/4}M^{3/4}Nv^{1/2}Q^{1/2}\mathcal{Q}^{1/2}.
\end{split}
\end{align}

Using \eqref{BoundD03} in \eqref{E1sumallres} and then summing trivially, we see that the contribution of $\mathscr{D}^0$ to $E_{1,+, M, N}$ is 
\begin{align}
\label{Bound2Theorem}
\begin{split}
\ll & q^\varepsilon \Big|\cos\Big (\frac {\pi}{2}(\alpha-\beta)\Big )\cos\Big (\frac {\pi}{2}(\delta-\gamma)\Big )\Big |(ab)^{3/2}(1+|\beta|)^{3/4}M^{1/4}N^{1/2} \\
\ll & q^{1/2+\varepsilon}\Big|\cos\Big (\frac {\pi}{2}(\alpha-\beta)\Big )\cos\Big (\frac {\pi}{2}(\delta-\gamma)\Big )\Big | (ab)^{3/2}(1+|\beta|)^{3/4}N^{1/4}.
\end{split}
\end{align}

Returning to \eqref{SumShape3}, it still remains to consider 
\begin{align*}
\begin{split}
\mathscr{D}^{\pm}= d_1d_2 \sqrt{a'b'} & \sum_{h\asymp H}\sum_{|g|\asymp G}\mathop{\sum\sum}_{\substack{d_2b'n-d_1a'm=g \\ n\asymp \mathcal{N}, m\asymp \mathcal{M}}}\frac {\sigma_{\alpha,\beta}(m)}{m^{(\alpha-\beta)/2}}\frac {\sigma_{\gamma,\delta}(n)}{n^{(\gamma-\delta)/2}}\hat{S}_v(\overline{\chi},m,n,a,b,hd)\\ & \times  \sum_{(c, a'b'v)=1}\overline{\chi}(c)\frac{S(hd,\overline{v^2a'b'} g,cd_1^\ast d_2^\ast)}{cd_1d_2\sqrt{a'b'}}\Phi \left( \frac{4\pi \sqrt{|g|hd}}{cd_1d_2\sqrt{a'b'}}, m,n,g,h\right),
\end{split}
\end{align*}
  where
 \begin{align*}
\begin{split}
\Phi(z,m,n,g,h):=F(m,n,|g|,h)&\vartheta\left(\frac{4\pi\sqrt{|g|hd}}{z\mathcal{Q}\sqrt{a'b'}}\right)\int\limits_0^\infty\int\limits_0^\infty E\left(x,y,\frac{4\pi\sqrt{|g|hd}}{z\sqrt{a'b'}}\right) \\\times & Y_{\beta-\alpha}\left(zd_1\sqrt{\frac{a'b'}{|g|hd}mx}\right)Y_{\delta-\gamma}\left(zd_2\sqrt{\frac{a'b'}{|g|hd}ny}\right) \dif x \dif y,
\end{split}
\end{align*}
  and
\begin{align*}
\begin{split}
 E(x,y,\ell)=W_{1,(\alpha+\beta)/2}\left(\frac{x}{M}\right)W_{1,(\gamma+\delta)/2}\left(\frac{y}{N}\right)\Psi(ax+ by-hd)\Delta_\ell (ax+ by-hd). 
\end{split}
\end{align*}
  Here we recall that the functions $W_{1,(\alpha+\beta)/2},  W_{2,(\gamma+\delta)/2}$ are defined in \eqref{W1betadef}. \newline

We note the following estimation on $\Phi(z,m,n,b,h)$ from \cite[Proposition 4.6]{Z2019}. 
\begin{lemma}
\label{Lemmefunctionf} 
 The function $\Phi$ is $C_c^\infty(\mathbb{R}^5)$ with each variable supported in 
\begin{align}
\label{zrange}
\begin{split}
 z\asymp Z:= \frac{\sqrt{GHd}}{\mathcal{Q}\sqrt{a'b'}}, \ n\asymp\mathcal{N}, \ m\asymp\mathcal{M}, \ g\asymp G, \ h\asymp H.
\end{split}
\end{align}
Further, it satisfies the following bound on the partial derivatives
\begin{align}
\label{BoundDerivatives}
\begin{split}
 \Phi^{(\boldsymbol{\alpha})} \ll_{\boldsymbol{\alpha}} & \frac{M^{3/4}N^{1/4}}{ab(d_1d_2)^{1/2}(\mathcal{MN})^{1/4}} \\
& \times \big ((1+|\alpha+\beta|)^{-1}(1+|\gamma+\delta|)^{-1}q^{-\varepsilon}Z\big )^{-\alpha_1}\big ((1+|\alpha+\beta|)^{-1}q^{-\varepsilon}\mathcal{M}\big )^{-\alpha_2}\big ((1+|\gamma+\delta|)^{-1}q^{-\varepsilon}\mathcal{N}\big )^{-\alpha_3}G^{-\alpha_4} H^{-\alpha_5},
\end{split}
\end{align}
for any multi-index $\boldsymbol{\alpha}=(\alpha_1,...,\alpha_5)$ with $\alpha_1, \cdots , \alpha_5 \geq 0$.
\end{lemma}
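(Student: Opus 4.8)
The plan is to follow the argument of \cite[Proposition 4.6]{Z2019}, itself modelled on \cite[Lemmas 4.1, 4.2]{Aryan15}. First I would settle the support and smoothness claims. The amplitude $F(m,n,|g|,h)$ already confines $m\asymp\mathcal{M}$, $n\asymp\mathcal{N}$, $g\asymp G$, $h\asymp H$, while $\vartheta$, being smooth and compactly supported in $\mathbb{R}_{>0}$, forces $\vartheta\bigl(4\pi\sqrt{|g|hd}/(z\mathcal{Q}\sqrt{a'b'})\bigr)$ to vanish unless $z\asymp\sqrt{|g|hd}/(\mathcal{Q}\sqrt{a'b'})\asymp Z$; this produces the $z$-range in \eqref{zrange} and, crucially, keeps $z$ bounded away from $0$. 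On this support the arguments of $Y_{\beta-\alpha}(\cdot)$ and $Y_{\delta-\gamma}(\cdot)$ remain in a fixed compact subset of $(0,\infty)$, where these Bessel functions are $C^\infty$; together with the compact support of $W_{1,(\alpha+\beta)/2}$ and $W_{1,(\gamma+\delta)/2}$ in the $x,y$-integration, this legitimises differentiation under the integral sign and yields $\Phi\in C_c^\infty(\mathbb{R}^5)$ with the stated support.

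For the derivative bound \eqref{BoundDerivatives}, the essential first move is to normalise the Bessel arguments in the inner double integral by the substitution $u=zd_1\sqrt{a'b'mx/(|g|hd)}$, $v=zd_2\sqrt{a'b'ny/(|g|hd)}$, i.e.\ the change of variables already used to analyse $I^{\pm}$ in the passage producing \eqref{Ievaluation1}. After this the Bessel factors become simply $Y_{\beta-\alpha}(u)$ and $Y_{\delta-\gamma}(v)$, which carry \emph{no} dependence on the five differentiation variables $z,m,n,g,h$; that dependence is instead collected into a single smooth amplitude $\mathcal{A}$ --- the product of the Jacobian factor, $W_{1,(\alpha+\beta)/2}(x/M)$, $W_{1,(\gamma+\delta)/2}(y/N)$, $\Psi(ax+by-hd)$, $\Delta_{\ell}(ax+by-hd)$ (with $\ell=4\pi\sqrt{|g|hd}/(z\sqrt{a'b'})\asymp\mathcal{Q}$) and, where present, $V_{\alpha,\beta,\gamma,\delta}(xy/q^2)$ --- all evaluated at $x=x(u,z,m,g,h)$, $y=y(v,z,n,g,h)$.

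Next I would apply the Leibniz rule to $\Phi=F\cdot\vartheta\cdot\int\!\int Y_{\beta-\alpha}(u)Y_{\delta-\gamma}(v)\,\mathcal{A}\,\dif u\,\dif v$. Derivatives of $F$ and $\vartheta$ cost $\mathcal{M}^{-1},\mathcal{N}^{-1},G^{-1},H^{-1}$ (resp.\ $Z^{-1},G^{-1},H^{-1}$) per differentiation, by hypothesis and the chain rule. For the integral one differentiates $\mathcal{A}$ under the integral sign; since $z,m,n,g,h$ enter $x,y$ only through monomials ($x\propto z^{-2}d_1^{-2}|g|h\,m^{-1}$, similarly $y$) and $\ell\propto z^{-1}\sqrt{|g|h}$, each differentiation produces an $O(1)$ factor (times $z^{-1},m^{-1},g^{-1}$ or $h^{-1}$) and a derivative of one of $W_{1,\ast}$, $\Psi$, $\Delta_{\ell}$, $V_{\alpha,\beta,\gamma,\delta}$. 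Using \eqref{deltader}, \eqref{W1derivative}, the bound $\Psi^{(i)}\ll U^{-i}$ with $U=Q^2$, and \eqref{Derbound}, one sees that the contributions of $\Psi$, $\Delta_{\ell}$ and $V_{\alpha,\beta,\gamma,\delta}$ are dominated by those of $W_{1,(\alpha+\beta)/2}$ and $W_{1,(\gamma+\delta)/2}$; thus each $m$-derivative of $\mathcal{A}$ costs $O\bigl((1+|\alpha+\beta|)q^{\varepsilon}/\mathcal{M}\bigr)$, each $n$-derivative $O\bigl((1+|\gamma+\delta|)q^{\varepsilon}/\mathcal{N}\bigr)$, each $z$-derivative $O\bigl((1+|\alpha+\beta|)(1+|\gamma+\delta|)q^{\varepsilon}/Z\bigr)$ (the product because $z$ sits in both $x$ and $y$), and each $g$- and $h$-derivative $O(G^{-1}q^{\varepsilon})$ and $O(H^{-1}q^{\varepsilon})$. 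Finally the remaining $u,v$-integral is bounded trivially: by $Y_{\nu}^{(j)}(w)\ll w^{-1/2}$ from \eqref{Ymbound}, integrating $|Y_{\beta-\alpha}(u)|$ and $|Y_{\delta-\gamma}(v)|$ over their (dyadic) ranges, together with the sizes of the Jacobian and of $\Delta_{\ell}$, produces precisely the prefactor $M^{3/4}N^{1/4}/\bigl(ab(d_1d_2)^{1/2}(\mathcal{MN})^{1/4}\bigr)$, just as in the derivation of \eqref{Ipmbound}. Collecting these bounds gives \eqref{BoundDerivatives}.

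The step I expect to be most delicate is this final assembly: one has to verify, case by case, that the ``cross terms'' coming from differentiating $\Psi$, $\Delta_{\ell}$ or $V_{\alpha,\beta,\gamma,\delta}$ rather than the $W_{1,\ast}$ are genuinely dominated under the standing restrictions $MN\ll q^{2+\varepsilon}$, $Q^{-}\leqslant\ell\leqslant 2Q$ and $U=Q^2$ --- this is exactly where comparisons of the type \eqref{Derbound} are invoked repeatedly --- and then to check that the resulting powers of $Z,\mathcal{M},\mathcal{N},G,H$ and the shift-dependent factors $(1+|\alpha+\beta|)$, $(1+|\gamma+\delta|)$ assemble into precisely the right-hand side of \eqref{BoundDerivatives}. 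No individual estimate is deep, but the bookkeeping in the Leibniz expansion, interacting with the $Y_{\nu}$-integration, needs care.
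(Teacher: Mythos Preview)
Your proposal is correct and follows essentially the same route as the paper: the paper also invokes \cite[Proposition~4.6]{Z2019} for the support and the undifferentiated case, then performs exactly the change of variables you describe (writing $\xi=4\pi\sqrt{|g|hd/a'b'}$ so that $x=u^2\xi^2/((4\pi d_1z)^2m)$, $y=v^2\xi^2/((4\pi d_2z)^2n)$) to separate the Bessel factors $Y_{\beta-\alpha}(u)Y_{\delta-\gamma}(v)$ from the five differentiation variables, and then appeals to \eqref{deltader}, \eqref{W1derivative}, \eqref{Ymbound} to bound the derivatives of the remaining amplitude. The one point the paper singles out explicitly, which you have folded into your general ``cross terms'' bookkeeping, is the $h$-derivative of $\Psi(ax+by-hd)\Delta_\ell(ax+by-hd)$: this produces a factor $\max(d/(\ell Q),d/U)$, and it is the bound $H\ll abN/d$ that converts this into $O(1/H)$ as required.
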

\begin{proof}
  The ranges given in \eqref{zrange} and the estimation given in \eqref{BoundDerivatives} for the case $\boldsymbol{\alpha}=(\alpha_1,...,\alpha_5)=\bf{0}$ are established in the proof of \cite[Proposition 4.6]{Z2019}. To estimate $\Phi^{(\boldsymbol{\alpha})}$, we set  $\xi := 4\pi\sqrt{|g|hd/\ell_1'\ell_2'}$ and make a change of variables to 
see that
\begin{align*}
\begin{split}
\Phi(z,m,n,g,h)=\frac {4\xi^4}{(16\pi^2 d_1d_2z^2)^2mn}&F(m,n,|g|,h)\vartheta\left(\frac{4\pi\sqrt{|g|hd}}{z\mathcal{Q}\sqrt{\ell_1'\ell_2'}}\right) \\
& \times \int\limits_0^\infty\int\limits_0^\infty E\left(\frac {u^2\xi^2}{(4\pi d_1z)^2m},\frac {v^2\xi^2}{(4\pi d_2z)^2n},\frac{\xi}{z}\right) Y_{\beta-\alpha}\left(u\right)Y_{\delta-\gamma}\left(v\right)uv \dif u \dif v,
\end{split}
\end{align*}
  It now follows from the above, \eqref{deltader}, \eqref{W1derivative}, \eqref{Ymbound} that \eqref{BoundDerivatives} holds.  As pointed out in the proof of  \cite[Proposition 4.6]{Z2019} that when taking a derivative of $\Psi(ax+by-hd)\Delta_\ell(ax+by-hd)$ with respect to $h$, we obtain a factor $\max(d/(\ell Q), d/U) \ll 1/H$ as $H\ll abN/d$, as needed. This completes the proof of the lemma.
\end{proof} 

  We now apply the Kuznetsov trace formula, Lemma \ref{Kuznetsov}, as did in \cite{Z2019}.  This leads to
\begin{align*}
\begin{split}
\mathscr{D}^{-}= d_1d_2 \sqrt{a'b'}\sum_{h\asymp H} & \sum_{|g|\asymp G}e\left(-\frac{b\overline{d_1^\ast d_2^\ast}}{v^2\ell_1'\ell_2'}\right)\mathop{\sum\sum}_{\substack{d_2b'n-d_1a'm=b \\ n\asymp \mathcal{N}, m\asymp \mathcal{M}}}\frac {\sigma_{\alpha,\beta}(m)}{m^{(\alpha-\beta)/2}}\frac {\sigma_{\gamma,\delta}(n)}{n^{(\gamma-\delta)/2}}\hat{S}_v(\overline{\chi},m,n,a,b,hd)\\  
& \times \left(\mathscr{M}^-(m,n,g,h)+\mathscr{E}^-(m,n,g,h)\right),
\end{split}
\end{align*}
  where $\mathscr{M}$ and $\mathscr{E}$ denote the contribution of the Maa\ss \ cusp forms and the Eisenstein spectrum, i.e.
\begin{align*}
\begin{split}
\mathscr{M}^-(m,n,g,h) = & \ \sum_{f\in\mathcal{B}(vab,\chi)}\check{\Phi}_{m,n,g,h}(t_f)\frac{\sqrt{|g|hd}}{\cosh(\pi t_f)}\overline{\rho_{f,\infty}}(hd)\rho_{f,\mathfrak{a}}(b) \quad \mbox{and} \\ 
\mathscr{E}^-(m,n,g,h) = & \ \sum_{\substack{\chi_1\chi_2=\chi \\ f\in \mathcal{B}(\chi_1,\chi_2)}}\frac{1}{4\pi}\int\limits_\mathbb{R}\check{\Phi}_{m,n,g,h}(t)\frac{\sqrt{|g|hd}}{\cosh(\pi t)}\overline{\rho_{f,\infty}}(hd,t)\rho_{f,\mathfrak{a}}(b,t) \dif t,
\end{split}
\end{align*}
where $\check{\Phi}_{m,n,g,h}(t)$ is defined as in \eqref{definitionBesselTransform}. \newline

A similar expression holds for $\mathscr{D}^{+}$. As the treatments are similar, we focus on the estimation of $\mathscr{D}^{-}$ in what follows.
For the same reason, it suffices to consider the contribution of the term $\mathscr{M}^-(m,n,g,h)$ to $\mathscr{D}^{-}$, which we denote by $\mathscr{D}^{-,\mathscr{M}}$.  Further let $\mathscr{D}_K^{-,\mathscr{M}}$ denote the expression resulting from restricting the spectral parameter to the dyadic interval $K\leqslant t_f< 2K$ in $\mathscr{D}^{-,\mathscr{M}}$. It follows from \cite[Lemma 2.1]{Z2019} and Lemma \ref{Lemmefunctionf} that we can restrict our attention to $K\leq  (1+|\alpha+\beta|)(1+|\gamma+\delta|)q^\varepsilon Z$ with a negligible error.  The Mellin inversion reveals that
$$\check{\Phi}_{m,n,b,h}(t)=\frac{1}{(2\pi i)^4}\int\limits_{(0)}\int\limits_{(0)}\int\limits_{(0)}\int\limits_{(0)}\frac{\widetilde{\check{\Phi}(t)}(s_1,...,s_4)}{m^{s_1}n^{s_2}|g|^{s_3}h^{s_4}}\dif s_4 \dif s_3 \dif s_2 \dif s_1,$$
where $\widetilde{\check{\Phi}(t)}(s_1,...,s_4)$ denotes the Mellin transform of $\check{\Phi}_{m,n,g,h}(t)$, specificially
\begin{equation}\label{DefinitionMellin-Bessel}
\widetilde{\check{\Phi}(t)}(s_1,...,s_4) = \int\limits_{(\mathbb{R}_{>0})^4}\check{\Phi}_{\mathfrak{m},\mathfrak{n},\mathfrak{g},\mathfrak{h}}(t)\mathfrak{m}^{s_1}\mathfrak{n}^{s_2}\mathfrak{g}^{s_3}
\mathfrak{h}^{s_4}\frac{\dif \mathfrak{m} \dif \mathfrak{n} \dif \mathfrak{g} \dif \mathfrak{h}}{\mathfrak{m}\mathfrak{n}\mathfrak{g}\mathfrak{h}}.
\end{equation}
 Again by Lemma \ref{Lemmefunctionf}, we can restrict the supports of the integrals to $|\Im(s_i)|\leq ((1+|\alpha+\beta|)(1+|\gamma+\delta|)Kq)^{\varepsilon}$, incurring a negligible error, so that
\begin{equation}
\label{DefinitionDMaass}
\mathscr{D}_K^{-,\mathscr{M}} =\frac{d_1d_2\sqrt{a'b'}}{(4\pi i)^4}\iiiint\limits_{\substack{\Re(s_i)=0 \\ |\Im (s_i)|\leqslant ((1+|\alpha+\beta|)(1+|\gamma+\delta|)Kq)^{\varepsilon}}}\mathscr{B}_K^{-,\mathscr{M}}(s_1,...,s_4)\dif s_4 \dif s_3 \dif s_2 \dif s_1 + \text{negligible error},
\end{equation}
  where
\begin{equation*}
\begin{split}
\mathscr{B}_K^{-,\mathscr{M}}(s_1,...,s_4):= & \ \sum_{\substack{f\in\mathcal{B}(v\ell_1\ell_2,\chi) \\  K\leqslant |t_f|<2K}}\frac{\widetilde{\check{\Phi}(t_f)}(s_1,...,s_4)}{\cosh(\pi t_f)} \sum_{h\asymp H}h^{-s_4}  \sum_{g\asymp G}|g|^{-s_3}\alpha(g,h,s_1,s_2)\sqrt{hd|g|}\overline{\rho_{f,\infty}}(hd)\rho_{f,\mathfrak{a}}(b),
\end{split}
\end{equation*}
and
\begin{equation*}
\alpha(g,h,s_1,s_2):= e\left(-\frac{g\overline{d_1^\ast d_2^\ast}}{v^2a'b'}\right) \mathop{\sum\sum}_{\substack{d_2b'n-d_1a'm=b \\ n\asymp \mathcal{N}, m\asymp \mathcal{M}}}\frac {\sigma_{\alpha,\beta}(m)}{m^{(\alpha-\beta)/2+s_1}}\frac {\sigma_{\gamma,\delta}(n)}{n^{(\gamma-\delta)/2+s_2}}\hat{S}_v(\overline{\chi},m,n,a,b,hd).
\end{equation*}

  We now use \eqref{Shat} and open the Kloosterman sum to see that 
\begin{equation}
\label{phi^2}
\begin{split}
\mathscr{B}_K^{-,\mathscr{M}}(s_1,...,s_4) = \sum_{\substack{x,y (v) \\ (xy,v)=1}}\chi(y)\mathscr{A}_K(x,y,s_1,...,s_4),
\end{split}
\end{equation}
where
\begin{equation*}
\begin{split}
\mathscr{A}_K(x,y,s_1,...,s_4) := & \ \sum_{\substack{f\in\mathcal{B}(vab,\chi) \\ K\leqslant |t_f|<2K}}\frac{\widetilde{\check{\Phi}(t_f)}(s_1,...,s_4)}{\cosh(\pi t_f)}\sum_{h\asymp H}\tau(h,s_4) \sum_{g\asymp G}|g|^{-s_3}\omega(g,s_1,s_2)\sqrt{hd|g|}\overline{\rho_{f,\infty}}(hd)\rho_{f,\mathfrak{a}}(g).
\end{split}
\end{equation*}
Here
\begin{align*}
\begin{split}
 \tau(h,s_4) :=& h^{-s_4}e\left(\frac{hd\overline{y}x}{v}\right) \quad \mbox{and} \\
\omega(g,s_1,s_2,s_3):= & e\left(-\frac{g\overline{d_1^\ast d_2^\ast}}{v^2\ell_1'\ell_2'}\right) \mathop{\sum\sum}_{\substack{d_2b'n-d_1a'm=g \\ n\asymp \mathcal{N}, m\asymp \mathcal{M}}}\frac {\sigma_{\alpha,\beta}(m)}{m^{(\alpha-\beta)/2+s_1}}\frac {\sigma_{\gamma,\delta}(n)}{n^{(\gamma-\delta)/2+s_2}}e\left(\frac{\left(d_2\overline{b'}n-d_1\overline{a'}m\right)\overline{xy}}{v}\right).
\end{split}
\end{align*}

  As the supports of the integrals in \eqref{DefinitionDMaass} are restricted to $|\Im m (s_i)|\leqslant (Kq)^{\varepsilon}$, we deduce from this and 
\eqref{phi^2} that 
\begin{align}
\label{DKbound}
\begin{split}\mathscr{D}_K^{-,\mathscr{M}} \ll q^{\varepsilon} d_1d_2\sqrt{a'b'}\phi(v)^2 \sup_{\substack{x,y,s_i \\ \Re(s_i)=0, |\Im (s_i)|\leqslant (Kq)^{\varepsilon}}}|\mathscr{A}_K(x,y,s_1,...,s_4)|.
\end{split}
\end{align}

  Note that by \eqref{BoundSpectralParamater}, $|\Im (t_f)|\leqslant \theta=7/64$ so that $\cosh(\pi t_f)$ is positive. We may thus apply the Cauchy-Schwarz inequality to see that
\begin{align}
\label{CauchySBeforeLargeSieve} 
\begin{split}
\left|\mathscr{A}_K(x,y,s_1,...,s_4)\right|\leqslant & \sup_{\substack{K\leqslant t<2K \\ \Re e (s_i)=0}}\left|\widetilde{\check{\Phi}(t)}(s_1,...,s_4)\right| \ \left( \sum_{\substack{f\in\mathcal{B}(vab,\chi) \\ K\leqslant |t_f|<2K}}\frac{(1+|t_f|)^{\kappa/2}}{\cosh(\pi t_f)}\left|\sum_{h\asymp H}\tau(h,s_4)\sqrt{hd}\overline{\rho_{f,\infty}}(hd)\right|^2\right)^{1/2} \\ 
& \hspace*{1cm} \times \ \left( \sum_{\substack{f\in\mathcal{B}(vab,\chi) \\ K\leqslant |t_f|<2K}}\frac{(1+|t_f|)^{-\kappa/2}}{\cosh(\pi t_f)}\left|\sum_{g\asymp G}|g|^{-s_3}\omega(g,s_1,s_2) \sqrt{|g|}\rho_{f,\mathfrak{a}}(g)\right|^2\right)^{1/2},
\end{split}
\end{align}
where $\kappa\in\{0,1\}$ satisfies $\chi(-1)=(-1)^\kappa$. \newline

 We now apply \eqref{definitionBesselTransform} and  \eqref{DefinitionMellin-Bessel} to see that
\begin{align*}
\begin{split}
\widetilde{\check{\Phi}(t)}(s_1,...,s_4)= & \ 8i^{-\kappa}\int\limits_0^\infty \Omega(x,s_1,...,s_4) \cosh (\pi t)K_{2it}(x)\frac{\dif x}{x},
\end{split}
\end{align*}
  where
$$\Omega(z,s_1,...,s_4):=\mathop{\iiiint}_{(\mathbb{R}_{>0})^4}\Phi(z,\mathfrak{n},\mathfrak{m},\mathfrak{g},\mathfrak{h})\mathfrak{n}^{s_1}
\mathfrak{m}^{s_2}\mathfrak{g}^{s_3}\mathfrak{h}^{s_4}\frac{\dif \mathfrak{n} \dif \mathfrak{m} \dif \mathfrak{g} \dif \mathfrak{h}}{\mathfrak{n}\mathfrak{m}\mathfrak{g}\mathfrak{h}}.$$
Using again Lemma~\ref{Lemmefunctionf}, we see that the support of $\Omega$ as the function of $z$ is $z\asymp Z$ and that it satisfies the uniform bound (recall that $\Re (s_i)=0$)
\begin{align*}
\begin{split}
\frac {\partial^{i}\Omega}{\partial z^{i}} \leq \mathop{\iiiint}_{(\mathbb{R}_{>0})^4}\Big |\frac {\partial^{i}\Omega}{\partial z^{i}}\Big | \Big |\mathfrak{n}^{s_1}\mathfrak{m}^{s_2}\mathfrak{g}^{s_3}\mathfrak{h}^{s_4} \Big |
\frac{\dif \mathfrak{n} \dif \mathfrak{m} \dif \mathfrak{g} \dif \mathfrak{h}}{\mathfrak{n}\mathfrak{m}\mathfrak{g}\mathfrak{h}} \ll q^{\varepsilon}\frac{M^{3/4}N^{1/4}}{ab(d_1d_2)^{1/2}(\mathcal{MN})^{1/4}} \big ((1+|\alpha+\beta|)^{-1}(1+|\gamma+\delta|)^{-1}q^{-\varepsilon}Z\big )^{-i}.
\end{split}
\end{align*}

Therefore, it follows from \cite[Lemma 2.1]{Z2019} (applying to $\phi= (q^{\varepsilon}\frac{M^{3/4}N^{1/4}}{L(d_1d_2)^{1/2}(\mathcal{MN})^{1/4}})^{-1}\Omega $ there) that 
\begin{equation}
\label{BoundMellin-Bessel}
\sup_{\substack{K\leqslant t<2K \\ \Re(s_i)=0}}\left|\widetilde{\check{\Phi}(t)}(s_1,...,s_4)\right| \ll q^{\varepsilon}\frac{(1+|\alpha+\beta|)(1+|\gamma+\delta|)M^{3/4}N^{1/4}}{Zab(d_1d_2)^{1/2}(\mathcal{MN})^{1/4}}.
\end{equation}

  Next, we writing $h=h_1h_2$ with $(h_1, d)=1, h_2\mid d^{\infty}$, we obtain
\begin{align}
\label{sumhbound} 
\begin{split}
 \sum_{\substack{f\in\mathcal{B}(vab,\chi) \\ K\leqslant |t_f|<2K}} & \frac{(1+|t_f|)^{\kappa/2}}{\cosh(\pi t_f)}\left|\sum_{h\asymp H}\tau(h,s_4)\sqrt{hd}\overline{\rho_{f,\infty}}(hd)\right|^2 \\
=& \sum_{\substack{f\in\mathcal{B}(vab,\chi) \\ K\leqslant |t_f|<2K}}\frac{(1+|t_f|)^{\kappa/2}}{\cosh(\pi t_f)}\left|\sum_{h_2 \mid d^{\infty}}\sum_{\substack{h_1 \asymp H/h_2 \\ (h_1, d)=1}}\tau(h_1h_2,s_4)\sqrt{h_1h_2d}\overline{\rho_{f,\infty}}(h_1h_2d)\right|^2.
\end{split}
\end{align}
 Note that the sum over $h_2$ is finite, due to the fact that $H$ is bounded and $h_2$ is of the form $h_2=q^k_0$ for some non-negative integer $k$. It follows from this and an application of Cauchy's inequality that 
\begin{align}
\label{sumhbound1} 
\begin{split}
\sum_{\substack{f\in\mathcal{B}(vab,\chi) \\ K\leqslant |t_f|<2K}} & \frac{(1+|t_f|)^{\kappa/2}}{\cosh(\pi t_f)}\left|\sum_{h\asymp H}\tau(h,s_4)\sqrt{hd}\overline{\rho_{f,\infty}}(hd)\right|^2 \\
 \ll & \sum_{h_2 \mid d^{\infty}} \sum_{\substack{f\in\mathcal{B}(vab,\chi) \\ K\leqslant |t_f|<2K}}\frac{(1+|t_f|)^{\kappa/2}}{\cosh(\pi t_f)}\left|\sum_{\substack{h_1 \asymp H/h_2 \\ (h_1, d)=1}}\tau(h_1h_2,s_4)\sqrt{h_1h_2d}\overline{\rho_{f,\infty}}(h_1h_2d)\right|^2.
\end{split}
\end{align}  

  Note that we also have $(d, vab)=1$ as $(ab,q)=1$ and $v|ab$.  We then infer from the second relation in \eqref{RelationHecke1} that
$$\sqrt{h_1h_2d}\rho_{f,\infty}(h_1h_2d)=\lambda_f(h_2d)\sqrt{h_1}\rho_{f,\infty}(h_1).$$
  We are thus able to apply the large sieve inequality in Lemma \ref{TheoremSpectralLargeSieve} that
\begin{align}
\label{LargeSieve1} 
\begin{split}
 & \left( \sum_{\substack{f\in\mathcal{B}(vab,\chi) \\ K\leqslant |t_f|<2K}}  \frac{(1+|t_f|)^{\kappa/2}}{\cosh(\pi t_f)}\left|\sum_{\substack{h_1 \asymp H/h_2 \\ (h_1, d)=1}}\tau(h_1h_2,s_4)\sqrt{h_1h_2d}\overline{\rho_{f,\infty}}(h_1h_2d)\right|^2\right)^{1/2} \\ 
& \hspace*{1.5cm} = \left( \sum_{\substack{f\in\mathcal{B}(vab,\chi) \\ K\leqslant |t_f|<2K}}\frac{(1+|t_f|)^{\kappa/2}}{\cosh(\pi t_f)}\left|\sum_{\substack{h_1 \asymp H/h_2 \\ (h_1, d)=1}}\tau(h_1h_2,s_4)\lambda_f(h_2d)\overline{\rho_{f,\infty}}(h_1)\sqrt{h_1} \right|^2\right)^{1/2} \\
 & \hspace*{2cm} \ll q^{\varepsilon}|\lambda_f(h_2d)|\left(K+\ell^{1/4}_0(\nu(\infty))^{1/2}\frac {^{1/2}}{h^{1/2}_2}\right)\|\tau(h_1h_2,s_4)\|_{H/h_2}
\end{split}
\end{align}
  Note that we have from  \eqref{BoundHeckeEigenvalue2} that $|\lambda_f(h_2d)|\leqslant 2n_0(h_2d)^\theta$. Note also that $\ell_0$ is the conductor of $\chi$ and does not exceed $v$. Moreover, we have $\nu(\mathfrak{a}) \leq 1$ for any cusp $\mathfrak{a}$ and we have $\| \tau(h_1h_2,s_4) \|_H \leq H^{1/2}/h^{1/2}_2$. It follows from these and \eqref{sumhbound}--\eqref{LargeSieve1} that 
\begin{align}
\label{LargeSieve2} 
\begin{split}
 & \left( \sum_{\substack{f\in\mathcal{B}(vab,\chi) \\ K\leqslant |t_f|<2K}}\frac{(1+|t_f|)^{\kappa/2}}{\cosh(\pi t_f)}\left|\sum_{h\asymp H}\tau(h,s_4)\sqrt{hd}\overline{\rho_{f,\infty}}(hd)\right|^2\right)^{1/2} \ll q^{\varepsilon}d^{\theta}H^{1/2}\left(K+v^{1/4}H^{1/2}\right).
\end{split}
\end{align}

Similarly, using $||\omega||_G \ll q^{\varepsilon} \mathcal{N}G^{1/2}$,
\begin{align}
\label{LargeSieve3} 
\begin{split}
 & \left( \sum_{\substack{f\in\mathcal{B}(vab,\chi) \\ K\leqslant |t_f|<2K}}\frac{(1+|t_f|)^{-\kappa/2}}{\cosh(\pi t_f)}\left|\sum_{g\asymp 
G}|b|^{-s_3}\omega(g,s_1,s_2) \sqrt{|g|}\rho_{f,\mathfrak{a}}(g)\right|^2\right)^{1/2} \ll  q^{\varepsilon}\mathcal{N}G^{1/2}\left(K+v^{1/4}
G^{1/2} \right).
\end{split}
\end{align}

   We conclude from \eqref{CauchySBeforeLargeSieve}, \eqref{BoundMellin-Bessel}, \eqref{LargeSieve2} and \eqref{LargeSieve3} that
\begin{align}
\label{Aest} 
\begin{split}
 \mathscr{A}_K \ll q^{\varepsilon}(1+|\alpha+\beta|)(1+|\gamma+\delta|)d^\theta\frac{M^{3/4}N^{1/4}(GH)^{1/2}\mathcal{N}^{3/4}}{abZ(d_1d_2)^{1/2}\mathcal{M}^{1/4}}
\left(K+v^{1/4}G^{1/2}\right)\left(K+v^{1/4}H^{1/2}\right).
\end{split}
\end{align}
  
 Note that as $Z=\frac{\sqrt{GHd}}{\mathcal{Q}\sqrt{a'b'}}$, $\mathcal{Q}\geqslant N^{1/2-\varepsilon}$ and $H \leqslant abN/d$, we have
\begin{align*}
\begin{split}
K \leqslant & (1+|\alpha+\beta|)(1+|\gamma+\delta|)q^\varepsilon Z = (1+|\alpha+\beta|)(1+|\gamma+\delta|) q^\varepsilon \frac{\sqrt{
GHd}}{\mathcal{Q}\sqrt{a'b'}} \\
\ll & (1+|\alpha+\beta|)(1+|\gamma+\delta|)q^\varepsilon \frac{(abG)^{1/2}}{(a'b')^{1/2}}.
\end{split}
\end{align*}
  It follows that
\begin{align}
\label{Ksumbound} 
\begin{split}
 K+v^{1/4}G^{1/2} \ll & (1+|\alpha+\beta|)(1+|\gamma+\delta|)(abG)^{1/2} \quad \mbox{and} \\
 K+v^{1/4}H^{1/2} \ll & (1+|\alpha+\beta|)(1+|\gamma+\delta|)(abG)^{1/2}+H^{1/2}.
\end{split}
\end{align}

Hence, from \eqref{Aest}, \eqref{Ksumbound} and the expression of $Z$,
\begin{align*}
\begin{split}
 \mathscr{A}_K \ll & q^{\varepsilon}(1+|\alpha+\beta|)^3(1+|\gamma+\delta|)^3d^\theta\frac{M^{3/4}N^{1/4}GH^{1/2}\mathcal{N}^{3/4}}{Z(d_1d_2)^{1/2}\mathcal{M}^{1/4}(ab)^{1/2}}
\left((abG)^{1/2}+H^{1/2}\right)\\
\ll & q^{\varepsilon}(1+|\alpha+\beta|)^3(1+|\gamma+\delta|)^3d^{-1/2+\theta}\frac{M^{3/4}N^{1/4}G^{1/2}\mathcal{N}^{3/4}\mathcal{Q}}{(d_1d_2)^{1/2}
\mathcal{M}^{1/4}}
\left((abG)^{1/2}+H^{1/2}\right) \\
 :=& \mathscr{A}_K(G)+\mathscr{A}_K(H).
\end{split}
\end{align*}

   We now use $G \leqslant (ab)^2\mathcal{M}$ and the maximum values of $\mathcal{M}$ and $\mathcal{N}$ given by Lemma \ref{LemmeRestrictionQ} (b) to obtain
\begin{align}
\label{ABest} 
\begin{split}
\mathscr{A}_K(G) \ll & \  q^{\varepsilon}(1+|\alpha+\beta|)^3(1+|\gamma+\delta|)^3d^{-1/2+\theta}(ab)^{5/2}(d_1d_2)^{-1/2}(M\mathcal{MN})^{3/4}N^{1/4}\mathcal{Q} \\  
\ll & q^{\varepsilon}(1+|\alpha+\beta|)^3(1+|\gamma+\delta|)^3d^{-1/2+\theta}(ab)^{11/2}(d_1d_2)^{-1/2}N\mathcal{Q}.
\end{split}
\end{align}

  Similarly, using the bound $H\leqslant abN/d$, we have
\begin{align}
\label{AHest} 
\begin{split}
\mathscr{A}_K(H) 
\ll & \ q^{\varepsilon}(1+|\alpha+\beta|)^3(1+|\gamma+\delta|)^3d^{-1+\theta}(d_1d_2)^{-1/2}(ab)^{3/2}(NM\mathcal{N})^{3/4}\mathcal{M}^{1/4}\mathcal{Q} \\
\ll & q^{\varepsilon}(1+|\alpha+\beta|)^3(1+|\gamma+\delta|)^3d^{-1+\theta}(d_1d_2)^{-1/2}(ab)^{7/2}NM^{1/2}\mathcal{Q}.
\end{split}
\end{align}

  Applying \eqref{ABest}  and \eqref{AHest} in \eqref{DKbound} gives
\begin{align*}
\begin{split}
 \mathscr{D}_K^{-,\mathscr{M}} \ll q^{\varepsilon}(1+|\alpha+\beta|)^3(1+|\gamma+\delta|)^3 \sqrt{d_1d_2a'b'}\phi(v)^2 (d^{-1/2+\theta}(ab)^{11/2}N\mathcal{Q}+d^{-1+\theta}(ab)^{7/2}NM^{1/2}\mathcal{Q}).
\end{split}
\end{align*}
  We then deduce that the above estimation holds for $\mathscr{D}^{-}$ as well. Substituting the above into \eqref{E1sumallres} and then summing trivially to see that the contribution from $\mathscr{D}^{-}$ to $E_{1,+, M, N}$ is 
\begin{align}
\label{Bound2Theoremsimplified}
\begin{split}
\ll & q^{1/2+\theta+\varepsilon+} (1+|\alpha+\beta|)^3(1+|\gamma+\delta|)^3 (ab)^{3/2} \Big|\cos\Big (\frac {\pi}{2}(\alpha-\beta)\Big )\cos\Big (\frac {\pi}{2}(\delta-\gamma)\Big )\Big| \left(\left(\frac{(ab)^{7}N}{q}\right)^{1/2}+\left(\frac{(ab)^{11}N}{M}\right)^{1/2}\right) \\
\ll & q^{1/2+\theta+\varepsilon} (1+|\alpha+\beta|)^3(1+|\gamma+\delta|)^3 (ab)^{7} \Big|\cos\Big (\frac {\pi}{2}(\alpha-\beta)\Big )\cos\Big (\frac {\pi}{2}(\delta-\gamma)\Big )\Big| \left(\frac{N}{M}\right)^{1/2},
\end{split}
\end{align}
  where the last bound above follows by noting that $M/q\leqslant 1$. \newline

  We now conclude this section by gathering \eqref{Bound2Theorem} and \eqref{Bound2Theoremsimplified} to arrive at an estimation for $E_{+, M,N}$.  Note that a similar estimation holds for $E_{-, M,N}$ as well.  We summarize these results to arrive at the following result.
\begin{proposition}
\label{Eval} With the notation as above. We have
\begin{align*}
 E_{+, M,N}+E_{-, M,N} \ll   q^{\varepsilon} (1+|\alpha|)^3(1+|\beta|)^3(1+|\gamma|)^3 (1+|\delta|)^3 (ab)^{7}e^{\frac {\pi}{2}(\Im
|\beta-\alpha|+\Im
|\delta-\gamma|)} \Big (q^{1/2+\theta} \left(\frac{N}{M}\right)^{1/2}  + q^{1/2}N^{1/4}\Big ), 
\end{align*}
where the implied constant depends only on $\varepsilon$.
\end{proposition}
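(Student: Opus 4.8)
The plan is to observe that nearly all of the analytic work is already done in this section, so the proof amounts to assembling the estimates obtained above. I would begin by recalling from \eqref{Splusmain} that $E_{+,M,N}=\sum_{j=1}^{21}E_{j,+,M,N}$, and that every one of these twenty-one pieces arises in the same way, from applying the Voronoi summation formula to the $m$- and $n$-sums in \eqref{Splusdelta} and collecting the various Bessel-type contributions; their treatments are therefore mutually parallel, so it suffices to bound the single representative $E_{1,+,M,N}$ displayed in \eqref{E1exp}.

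For $E_{1,+,M,N}$ I would follow the route laid out above: use Lemma \ref{LemmeRestrictionQ} to truncate the $\ell$-sum to $[Q^-,2Q]$ and the $m$- and $n$-sums to $m\le\mathcal{M}_0$, $n\le\mathcal{N}_0$; insert dyadic partitions of unity in $\ell$, $m$, $n$, $h$ and in the shifted variable $g=d_2b'n-d_1a'm$; and reduce to bounding $\mathscr{D}=\mathscr{D}^++\mathscr{D}^-+\mathscr{D}^0$ from \eqref{SumShape3}. The diagonal term $\mathscr{D}^0$ (the contribution of $g=0$) is handled by estimating the Kloosterman sums trivially and summing the resulting divisor sums, which yields \eqref{Bound2Theorem}; its contribution to $E_{1,+,M,N}$ is $\ll q^{1/2+\varepsilon}(ab)^{3/2}(1+|\beta|)^{3/4}N^{1/4}$, dominated by the $q^{1/2}N^{1/4}$ term of the claimed bound. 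The off-diagonal terms $\mathscr{D}^\pm$ are treated by opening the Kloosterman sums, applying the Kuznetsov trace formula (Lemma \ref{Kuznetsov}), passing to the Maass spectrum (the holomorphic and Eisenstein parts being identical) and a dyadic spectral window $K\le|t_f|<2K$, performing a Mellin separation of variables, using Cauchy--Schwarz to split the $h$-sum from the $g$-sum, and invoking the spectral large sieve inequalities of Lemma \ref{TheoremSpectralLargeSieve} together with the Kim--Sarnak bound \eqref{BoundHeckeEigenvalue2}; tracking the admissible sizes $G\le(ab)^2\max(\mathcal{M},\mathcal{N})$, $H\le abN/d$ and the maximal ranges $\mathcal{M}_0,\mathcal{N}_0$ produces the bound \eqref{Bound2Theoremsimplified} for the contribution of $\mathscr{D}^\pm$.

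To finish I would combine \eqref{Bound2Theorem} and \eqref{Bound2Theoremsimplified}, use $1+|\alpha+\beta|\le(1+|\alpha|)(1+|\beta|)$ and $1+|\gamma+\delta|\le(1+|\gamma|)(1+|\delta|)$, convert the cosine factors into the exponential $e^{\frac{\pi}{2}(|\Im(\beta-\alpha)|+|\Im(\delta-\gamma)|)}$ via $|\cos(\tfrac{\pi}{2}u)|\ll e^{\frac{\pi}{2}|\Im u|}$, and sum over the fixed finite set $j=1,\dots,21$; this gives the asserted bound for $E_{+,M,N}$. The estimate for $E_{-,M,N}$ follows verbatim, the only change being the substitution of the shifts $\alpha,\beta,\gamma,\delta$ by $-\gamma,-\delta,-\alpha,-\beta$ throughout (compare \eqref{SplusVS} and \eqref{SminusVS}), which leaves all bounds unchanged.

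The step I expect to be the main obstacle — and the one carrying all the content — is the off-diagonal analysis of $\mathscr{D}^\pm$: one has to arrange the Kuznetsov plus Cauchy--Schwarz plus spectral-large-sieve chain so that both the ``diagonal'' and ``spectral'' contributions beat the trivial bound uniformly in the many parameters ($M,N,d,\mathcal{M},\mathcal{N},G,H,\mathcal{Q}$), which forces the careful bookkeeping of the Hecke relation \eqref{RelationHecke1} after the split $h=h_1h_2$ with $(h_1,d)=1$, $h_2\mid d^\infty$, and is responsible for the factor $d^\theta$ coming from the failure of the Ramanujan conjecture for Maass forms.
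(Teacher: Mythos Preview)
Your proposal is correct and follows essentially the same route as the paper: the proposition is simply the combination of the bounds \eqref{Bound2Theorem} (from the $g=0$ contribution $\mathscr{D}^0$) and \eqref{Bound2Theoremsimplified} (from the Kuznetsov/spectral-large-sieve treatment of $\mathscr{D}^\pm$), together with the observation that the other twenty error pieces and $E_{-,M,N}$ are handled identically. Your identification of the key bookkeeping step --- the split $h=h_1h_2$ with $h_2\mid d^\infty$ together with the Hecke relation \eqref{RelationHecke1} and the Kim--Sarnak bound \eqref{BoundHeckeEigenvalue2}, producing the factor $d^\theta$ --- matches the paper exactly.
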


\section{Evaluation of $S^0_{+, M,N}+S^0_{-, M,N}$}

In evaluating $S^0_{+, M,N}+S^0_{-, M,N}$, we shall show that the sums over $m$ and $n$ contribute to a second main term of the fourth moment. As the treatments are similar, we focus on $S^{\pm;-+-+}_{+, M,N}$ in the sequal.  By a change of variables, we see that
\begin{align*}
\begin{split}
& \int\limits_0^{\infty} \int\limits_0^{\infty} x^{-\alpha}y^{-\gamma}E^{\pm}(x,y,\ell)  \dif x \dif y\\
=&\frac 1{a^{1-\alpha}b^{1-\gamma}}\iint x^{-\alpha}(\mp (u+hd-x))^{-\gamma}\Delta_\ell (u)
\Phi(u)V_{\alpha,\beta,\gamma,\delta}\left(\frac{\mp x(u+hd-x)}{q^2}\right)W_1\big( \frac {x}{aM} \big) W_2\Big( \frac {\mp (u+hd-x)}{bN} \Big) \dif x \dif u \\
=&\frac 1{a^{1-\alpha}b^{1-\gamma}}\iint ( u+hd \mp y)^{-\alpha}y^{-\gamma}\Delta_\ell (u)
\Phi(u)V_{\alpha,\beta,\gamma,\delta}\left(\frac{ y (u+hd \mp y)}{q^2}\right)W_1\Big( \frac {u+hd \mp y}{aM} \Big)W_2 \big(\frac {y}{bN} \big) \dif y \dif u.
\end{split}
\end{align*}

  As  $W_{1, 2}$ are compactly support in $[1, 2]$ and $\Phi$ supported in $[0, U]$, we see that the above expressions are bounded by
\begin{align*}
\begin{split}
 \frac {\min (aM, bN)}{ab}\int\limits_{|u| \leq U} \Big |\Delta_\ell (u)\Big | \dif u \ll 
\frac {\min (aM, bN)}{ab}\int\limits_{|u| \leq U} \frac 1{Q^2} \dif u \ll \frac {\min (aM, bN)}{ab},
\end{split}
\end{align*}
 where the last estimation above follows  from the condition $U = Q^2$ and \eqref{BoundDelta}. We then conclude that for all $\ell$, we have
\begin{align}
\label{Integralbound}
\begin{split}
& \int\limits_0^{\infty} \int\limits_0^{\infty} x^{-\alpha}y^{-\gamma}E^{\pm}(x,y,\ell) \dif x \dif y \ll \frac {\min (aM, bN)}{ab} \ll \frac {MN}{aM+bN}.
\end{split}
\end{align}

  Moreover, observe that the function $W_2(\frac {\mp(u+hd-x)}{bN})$ is non-zero only when $\mp(u+hd-x) \ll 2bN$. Similarly, $W_1(\frac {x}{aM})$ is non-zero only when $x \ll 2aM$. It follows from this and \eqref{Vbounds} that
\begin{align}
\label{higherderest}
\begin{split}
& \Big ((\mp (u+hd-x))^{-\gamma}
\Phi(u)V_{\alpha,\beta,\gamma,\delta}\left(\frac{\mp x(u+hd-x)}{q^2}\right)W_2 \Big(\frac {\mp (u+hd-x)}{bN}\Big)\Big )^{(j)} \ll \max \Big( \Big(\frac {aM}{q^2} \Big)^{j}, (bN)^{-j}, U^{-j} \Big).
\end{split}
\end{align}
  
 Recall that $MN \leq \big ((1+|\alpha|)(1+|\beta|)(1+|\gamma|)(1+|\delta|)\big)^{1/2+\varepsilon}q^{2+\varepsilon}$, so that $abMN \leq ab\big ((1+|\alpha|)(1+|\beta|)(1+|\gamma|)(1+|\delta|)\big)^{1/2+\varepsilon}q^{2+\varepsilon}$. Consequently,
\begin{align*}
\begin{split}
 \max \Big( \Big(\frac {aM}{q^2} \Big)^{j}, (bN)^{-j} \Big) \ll (ab\big ((1+|\alpha|)(1+|\beta|)(1+|\gamma|)(1+|\delta|)\big)^{1/2+\varepsilon}q^{\varepsilon})^j(bN)^{-j}.
\end{split}
\end{align*}

  It follows from \eqref{UQvalue} that \eqref{higherderest} is
\begin{align*}
\begin{split}
\ll  (ab\big ((1+|\alpha|)(1+|\beta|)(1+|\gamma|)(1+|\delta|)\big)^{1/2+\varepsilon}q^{\varepsilon})^j(bN)^{-j}.
\end{split}
\end{align*}

  We then deduce from the above and \cite[(18)]{DFI94}, for any integer $j \geq 0$, 
\begin{align}
\label{doubleIntegralsimplified}
\begin{split}
\frac 1{a^{1-\alpha}b^{1-\gamma}} & \iint x^{-\alpha}(\mp (u+hd-x))^{-\gamma}\Delta_\ell (u)
\Phi(u)V_{\alpha,\beta,\gamma,\delta}\left(\frac{\mp x(u+hd-x)}{q^2}\right)W_1\Big( \frac {x}{aM} \Big)W_2 \Big(\frac {\mp (u+hd-x)}{bN} \Big) \dif x \dif u \\
=&\frac 1{a^{1-\alpha}b^{1-\gamma}}\int x^{-\alpha}(\mp (hd-x))^{-\gamma}V_{\alpha,\beta,\gamma,\delta}\left(\frac{\mp x(hd-x)}{q^2}\right)W_1 \Big( \frac {x}{aM} \Big) W_2 \Big(\frac {\mp (hd-x)}{bN} \big) \dif x \\
&\hspace*{3cm} +O\Big( \Big(\frac {\ell Q ab\big ((1+|\alpha|)(1+|\beta|)(1+|\gamma|)(1+|\delta|)\big)^{1/2+\varepsilon}q^{\varepsilon})}{bN} \Big)^j \Big).
\end{split}
\end{align}

  We may thus replace the integral in \eqref{SplusVS} by the integral appearing on the right-hand side above and ignore the contribution of the error term above for $\ell < \Big ((bN)/(Q ab\big ((1+|\alpha|)(1+|\beta|)(1+|\gamma|)(1+|\delta|)\big)^{1/2})\Big )^{1-\varepsilon} :=L_0$. On the other hand, we note that $h \ll (aM+bN+U)/d \ll (aM+bN)/d$. Also, it follows from \cite[(3.5)]{iwakow}) that
\begin{align*}
\begin{split}
 c_{\ell}(hd) \ll (\ell, hd).
\end{split}
\end{align*}
From the above and \eqref{Integralbound},
\begin{align}
\label{Splusminusplusllarge}
\begin{split}
& \frac{\zeta(1-\alpha+\beta)\zeta(1-\gamma+\delta)}{\sqrt{MN}}\sum_{d \mid q} \varphi(d)\mu\left(\frac{q}{d}\right) \sum_{L_0 \leq \ell\leqslant 2Q}\sum_{h\neq 0}\frac {c_{\ell}(hd)(a,\ell)^{1-\alpha+\beta}(b,\ell)^{1-\gamma+\delta}}{\ell^{2-\alpha+\beta-\gamma+\delta}}
\int\limits_0^{\infty}\int\limits_0^{+\infty} x^{-\alpha}y^{-\gamma}E^{\pm}(x,y,\ell) \dif x \dif y\\
\ll & \frac{ab|\zeta(1-\alpha+\beta)\zeta(1-\gamma+\delta)|}{\sqrt{MN}}\sum_{d \mid q} \varphi(d)\Big |\mu\left(\frac{q}{d}\right)\Big |\sum_{\substack{f\mid hd \\ h\ll (aM+bN)/d }} \sum_{\substack{L_0 \leq \ell\leqslant 2Q \\ f\mid \ell}}\frac {1}{\ell^{2+\Re(-\alpha+\beta-\gamma+\delta)}}
\frac {MN}{aM+bN} \\
\ll & \frac{|\zeta(1-\alpha+\beta)\zeta(1-\gamma+\delta)|\min (aM, bN)}{\sqrt{MN}}\sum_{d \mid q} \varphi(d)\Big |\mu\left(\frac{q}{d}\right)\Big |\sum_{\substack{f\mid hd \\ h\ll (aM+bN)/d }} \frac {1}{f^{2+\Re(-\alpha+\beta-\gamma+\delta)}}\cdot \frac {1}{(L_0/f)^{1+\Re(-\alpha+\beta-\gamma+\delta)}} \\
\ll & \frac{q^{1+\varepsilon}|\zeta(1-\alpha+\beta)\zeta(1-\gamma+\delta)|\min (aM, bN)}{\sqrt{MN}}\frac { (ab\big ((1+|\alpha|)(1+|\beta|)(1+|\gamma|)(1+|\delta|)\big))^{1/4}}{(bN)^{1/4}}.
\end{split}
\end{align}   

  Similarly, using the estimate (which can be established analogue to that in \eqref{Integralbound})
\begin{align}
\label{Integralboundsimpleone}
\begin{split}
 \frac 1{a^{1-\alpha}b^{1-\gamma}}\int x^{-\alpha}(\mp (hd-x))^{-\gamma}V_{\alpha,\beta,\gamma,\delta}\left(\frac{\mp x(hd-x)}{q^2}\right)W_1 \Big( \frac {x}{aM} \Big) W_2 \Big( \frac {\mp (hd-x)}{bN} \Big) \dif x \ll  \frac {\min (aM, bN)}{ab},
\end{split}
\end{align} 
 we have
\begin{align}
\label{Splusminusplusllarge1}
\begin{split}
& \frac{\zeta(1-\alpha+\beta)\zeta(1-\gamma+\delta)}{\sqrt{MN}} \sum_{d \mid q} \varphi(d)\mu\left(\frac{q}{d}\right) \sum_{L_0 \leq \ell\leqslant 2Q}\sum_{h\neq 0}\frac {c_{\ell}(hd)(a,\ell)^{1-\alpha+\beta}(b,\ell)^{1-\gamma+\delta}}{\ell^{2-\alpha+\beta-\gamma+\delta}} \\
& \hspace*{3cm} \times \frac 1{a^{1-\alpha}b^{1-\gamma}}\int x^{-\alpha}(\mp (hd-x))^{-\gamma}V_{\alpha,\beta,\gamma,\delta}\left(\frac{\mp x(hd-x)}{q^2}\right)W_1\Big(\frac {x}{aM}\Big)W_2 \Big(\frac {\mp (hd-x)}{bN}\Big) \dif x \\
& \hspace*{1cm} \ll  \frac{q^{1+\varepsilon}|\zeta(1-\alpha+\beta)\zeta(1-\gamma+\delta)|\min (aM, bN)}{\sqrt{MN}}\frac { (ab\big ((1+|\alpha|)(1+|\beta|)(1+|\gamma|)(1+|\delta|)\big))^{1/4}}{(bN)^{1/4}}.
\end{split}
\end{align}   

  We deduce from \eqref{Splusminusplusllarge} and \eqref{Splusminusplusllarge1} that we may first replace the integral in \eqref{SplusVS} by the integral appearing on the right-hand side of \eqref{doubleIntegralsimplified} for $\ell < L_0$ and discard the remaining integral in \eqref{SplusVS} for $\ell \geq L_0$ by an error of size 
\begin{align*}
\begin{split}
\ll \frac{q^{1+\varepsilon}|\zeta(1-\alpha+\beta)\zeta(1-\gamma+\delta)|\min (aM, bN)}{\sqrt{MN}}\frac { (ab\big ((1+|\alpha|)(1+|\beta|)(1+|\gamma|)(1+|\delta|)\big))^{1/4}}{(bN)^{1/4}}.
\end{split}
\end{align*} 

  We then extend the resulting expression to all $\ell \geq 1$ with an error of the same size as above. This way, we get
\begin{align}
\label{Splusminusplussimplified}
\begin{split}
S^{\pm;-+-+}_{+, M,N} =& \frac{\zeta(1-\alpha+\beta)\zeta(1-\gamma+\delta)}{\sqrt{MN}}\sum_{d \mid q} \varphi(d)\mu\left(\frac{q}{d}\right) \sum^{\infty}_{\ell=1}\sum_{h\neq 0}\frac {c_{\ell}(hd)(a,\ell)^{1-\alpha+\beta}(b,\ell)^{1-\gamma+\delta}}{\ell^{2-\alpha+\beta-\gamma+\delta}} \\
& \hspace*{1.2cm} \times \frac 1{a^{1-\alpha}b^{1-\gamma}}\int x^{-\alpha}(\pm (hd-x))^{-\gamma}V_{\alpha,\beta,\gamma,\delta}\left(\frac{\pm x(hd-x)}{q^2}\right)W_1\Big(\frac {x}{aM}\Big)W_2\Big(\frac {\pm (hd-x)}{bN} \Big) \dif x \\
& \hspace*{0.7cm} + O\Big(\frac{q^{1+\varepsilon}|\zeta(1-\alpha+\beta)\zeta(1-\gamma+\delta)|\min (aM, bN)}{\sqrt{MN}}\frac { (ab\big ((1+|\alpha|)(1+|\beta|)(1+|\gamma|)(1+|\delta|)\big))^{1/4}}{(bN)^{1/4}}\Big).
\end{split}
\end{align}

  We now note from \cite[(3.2)]{iwakow} that
\begin{align*}
\begin{split}
 c_{\ell}(hd)=\sum_{\substack{ef=\ell \\ f \mid hd}}\mu(e)f.
\end{split}
\end{align*}

Applying this in \eqref{Splusminusplussimplified}, we arrive at
\begin{align}
\label{Splusminusplusintegralsimplified}
\begin{split}
S^{\pm;-+-+}_{+, M,N} =& \frac{\zeta(1-\alpha+\beta)\zeta(1-\gamma+\delta)}{\sqrt{MN}}\sum_{d \mid q} \varphi(d)\mu\left(\frac{q}{d}\right) \sum_{e \geq 1}\frac {\mu(e)}{e^{2-\alpha+\beta-\gamma+\delta}}\sum_{f \geq 1}\sum_{\substack{h\neq 0 \\ f \mid hd}}\frac {(a,ef)^{1-\alpha+\beta}(b,ef)^{1-\gamma+\delta}}{f^{1-\alpha+\beta-\gamma+\delta}} \\
& \hspace*{1.2cm}\times \frac 1{a^{1-\alpha}b^{1-\gamma}}\int x^{-\alpha}(\pm (hd-x))^{-\gamma}V_{\alpha,\beta,\gamma,\delta}\left(\frac{\pm x(hd-x)}{q^2}\right)W_1\Big(\frac {x}{aM}\Big)W_2\Big(\frac {\pm (hd-x)}{bN}\Big) \dif x \\
& \hspace*{0.7cm} + O\Big(\frac{q^{1+\varepsilon}|\zeta(1-\alpha+\beta)\zeta(1-\gamma+\delta)|\min (aM, bN)}{\sqrt{MN}}\frac { (ab\big ((1+|\alpha|)(1+|\beta|)(1+|\gamma|)(1+|\delta|)\big))^{1/4}}{(bN)^{1/4}} \Big).
\end{split}
\end{align}

  If $d \neq 1$, then $q_0 |d$. If moreover, in this case, $(f, d)>1$, then we must have $q_0 \mid f$. If follows from this and \eqref{Integralboundsimpleone} that
\begin{align}
\label{Splusminusplussimplifieddlarge}
\begin{split}
& \frac{\zeta(1-\alpha+\beta)\zeta(1-\gamma+\delta)}{\sqrt{MN}}\sum_{\substack{d \mid q\\ (d,q)>1}} \varphi(d)\mu\left(\frac{q}{d}\right) \sum_{e \geq 1}\frac {\mu(e)}{e^{2-\alpha+\beta-\gamma+\delta}}\sum_{f \geq 1}\sum_{\substack{h\neq 0 \\ f \mid hd \\ (f, d)>1}}\frac {(a,ef)^{1-\alpha+\beta}(b,ef)^{1-\gamma+\delta}}{f^{1-\alpha+\beta-\gamma+\delta}} \\
& \hspace*{3cm}  \times \frac 1{a^{1-\alpha}b^{1-\gamma}}\int x^{-\alpha}(\pm (hd-x))^{-\gamma}V_{\alpha,\beta,\gamma,\delta}\left(\frac{\pm x(hd-x)}{q^2}\right)W_1\Big(\frac {x}{aM} \Big)W_2 \Big(\frac {\pm (hd-x)}{bN} \Big) \dif x \\
& \hspace*{1cm} \ll \frac{\zeta(1-\alpha+\beta)\zeta(1-\gamma+\delta)\min (aM, bN)}{(MN)^{1/2}q_0}\sum_{\substack{d \mid q\\ (d,q)>1}} \varphi(d)\Big |\mu\left(\frac{q}{d}\right) \Big |\sum_{\substack{f, h \\ f\mid hd \\ h\ll (aM+bN)/d }}\frac {1}{f^{1+\Re(-\alpha+\beta-\gamma+\delta)}} \\
& \hspace*{1cm} \ll \frac{\zeta(1-\alpha+\beta)\zeta(1-\gamma+\delta)\min (aM, bN)(aM+bN)}{(MN)^{1/2}q_0} \ll  \frac{\zeta(1-\alpha+\beta)\zeta(1-\gamma+\delta)ab(MN)^{1/2}}{q_0}  . 
\end{split}
\end{align}

   We now deduce from \eqref{Splusminusplusintegralsimplified} and \eqref{Splusminusplussimplifieddlarge} that
\begin{align}
\label{Splusminusplusintegralremovingd}
\begin{split}
S^{\pm;-+-+}_{+, M,N} =& \frac{\zeta(1-\alpha+\beta)\zeta(1-\gamma+\delta)}{\sqrt{MN}}\sum_{d \mid q} \varphi(d)\mu\left(\frac{q}{d}\right) \sum_{e \geq 1}\frac {\mu(e)}{e^{2-\alpha+\beta-\gamma+\delta}}\sum_{f \geq 1}\sum_{\substack{h\neq 0 \\ f \mid hd \\ (f,d)=1}}\frac {(a,ef)^{1-\alpha+\beta}(b,ef)^{1-\gamma+\delta}}{f^{1-\alpha+\beta-\gamma+\delta}} \\
& \hspace*{1cm} \times \frac 1{a^{1-\alpha}b^{1-\gamma}}\int x^{-\alpha}(\pm (hd-x))^{-\gamma}V_{\alpha,\beta,\gamma,\delta}\left(\frac{\pm x(hd-x)}{q^2}\right)W_1 \Big(\frac {x}{aM} \Big)W_2 \Big(\frac {\pm (hd-x)}{bN} \Big) \dif x \\
&\hspace*{1cm} + O\Big(\frac{q^{1+\varepsilon}|\zeta(1-\alpha+\beta)\zeta(1-\gamma+\delta)|\min (aM, bN)}{\sqrt{MN}}\frac { (ab\big ((1+|\alpha|)(1+|\beta|)(1+|\gamma|)(1+|\delta|)\big))^{1/4}}{(bN)^{1/4}}\Big) \\
&\hspace*{1cm} +O\Big( \frac{\zeta(1-\alpha+\beta)\zeta(1-\gamma+\delta)ab\sqrt{MN}}{q_0} \Big) \\
=:& T^{\pm;-+-+}_{+, M,N} + O\Big(\frac{q^{1+\varepsilon}|\zeta(1-\alpha+\beta)\zeta(1-\gamma+\delta)|\min (aM, bN)}{\sqrt{MN}}\frac { (ab\big ((1+|\alpha|)(1+|\beta|)(1+|\gamma|)(1+|\delta|)\big))^{1/4}}{(bN)^{1/4}}\Big) \\
& \hspace*{1cm} +O\Big(\frac{\zeta(1-\alpha+\beta)\zeta(1-\gamma+\delta)ab(MN)^{1/2}}{q_0}\Big).
\end{split}
\end{align}

Let $\varphi^{+}(q)$ be the number of even primitive Dirichlet characters modulo $q$ and we write $\tilde{v}_j$, $j=1,2$ the Mellin transforms of $v_j$. We also set $\boldsymbol{\alpha}=(\alpha, \beta), \boldsymbol{\beta}=(\gamma, \delta)$. Then we note that $T^{\pm;-+-+}_{+, M,N}=2\varphi^{+}(q)T^{\pm;-+-+}_{M,N}(\boldsymbol{\alpha}, \boldsymbol{\beta})$, with $T^{-;-+-+}_{M,N}(\boldsymbol{\alpha}, \boldsymbol{\beta})$ defined on the bottom of \cite[p. 71]{Liu24} by setting $t=0$, $\widetilde{W}(u)=\tilde{v}_1(u)$, $\widetilde{W}(v)=\tilde{v}_2(v)$ there. The expression $T^{+;-+-+}_{M,N}(\boldsymbol{\alpha}, \boldsymbol{\beta})$ is similarly defined. \newline

   We also define the function $C_{\boldsymbol{\alpha}, \boldsymbol{\beta},a,b}(s)$ such that
\begin{align}
\label{Cproduct}
\begin{split}
C_{\boldsymbol{\alpha}, \boldsymbol{\beta},a,b}(s)=C_{\boldsymbol{\alpha}, \boldsymbol{\beta},a}(s)C_{\boldsymbol{\beta},\boldsymbol{\alpha}, b}(s),
\end{split}
\end{align}
  where 
\begin{align}
\label{Cadef}
\begin{split}
C_{\boldsymbol{\alpha}, \boldsymbol{\beta},a}(s)=\prod_{p|a}\Big( 1-\frac {1}{p^{2-\alpha+\beta-\gamma+\delta}}\Big)^{-1}\prod_{p^{l_a}\|a}\Big(1-\frac {1}{p^{2s+\alpha+\delta}}\Big)^{-1}\Big( C^0_{\boldsymbol{\alpha}, \boldsymbol{\beta},a}(s)-\frac {C^1_{\boldsymbol{\alpha}, \boldsymbol{\beta},a}(s)}{p}+\frac {C^2_{\boldsymbol{\alpha}, \boldsymbol{\beta},a}(s)}{p^2} \Big),
\end{split}
\end{align}
  and where
\begin{align}
\label{C0adef}
\begin{split}
 C^0_{\boldsymbol{\alpha}, \boldsymbol{\beta},a}(s)=& 1-\frac {1}{p^{(1+l_a)(2s+\alpha+\delta)}}, \quad
 C^1_{\boldsymbol{\alpha}, \boldsymbol{\beta},a}(s)= \Big(p^{\gamma-\delta}+\frac {1}{p^{2s+\beta+\delta}}\Big)\Big(1-\frac {1}{p^{l_a(2s+\alpha+\delta)}}\Big) \quad \mbox{and} \\
 C^2_{\boldsymbol{\alpha}, \boldsymbol{\beta},a}(s)=& p^{\alpha-\beta+\gamma-\delta}\Big(\frac {1}{p^{2s+\alpha+\delta}}-\frac {1}{p^{l_a(2s+\alpha+\delta)}}\Big).
\end{split}
\end{align}
  We are thus able to proceed as did on \cite[p. 71-75]{Liu24} and utilize \eqref{chistar} to arrive at, analogous to the last display on \cite[p. 75]{Liu24}, that
\begin{align}
\label{Tsum}
\begin{split}
 T^{+;-+-+}_{+, M,N} & +  T^{-;-+-+}_{+, M,N}\\
=& \frac{\zeta(1-\alpha+\beta)\zeta(1-\gamma+\delta)}{\zeta(2-\alpha+\beta-\gamma+\delta)}\sum_{d \mid q} \varphi(d)\mu\left(\frac{q}{d}\right)\frac 1{(2\pi i)^3}\int\limits_{(2\varepsilon)}\int\limits_{(\varepsilon)}\int\limits_{(\varepsilon)}C_{\boldsymbol{\alpha}, \boldsymbol{\beta},a,b} \Big(s+\frac {u+v}{2}\Big)\mathcal{G}(s)g_{\alpha, \beta, \gamma, \delta}(s)\tilde{v}_1(u)\tilde{v}_2(v) \\
& \hspace*{2cm} \times \pi^{1/2}\big(\frac {q}{\pi}\big)^{2s}d^{-(2s+u+v+\alpha+\gamma)}M^uN^v\frac {\zeta(2s+u+v+\alpha+\gamma)\zeta(1+2s+u+v+\beta+\delta)}{a^{1/2-s-u-\alpha}b^{1/2-s-v-\gamma}}\\
& \hspace*{2cm} \times \frac {\zeta(\frac {2s+u+v+\alpha+\gamma}{2})\zeta(\frac {1/2-s-u-\alpha}{2})\zeta(\frac {1/2-s-v-\gamma}{2})}{\zeta(\frac {1/2+s+u+\alpha}{2})\zeta(\frac {1/2+s+v+\gamma}{2})\zeta(\frac {1-2s-u-v-\alpha-\gamma}{2})} \dif v \dif u \frac {\dif s}{s} +2\varphi^+(q)(R_1-R'_1) \\
=& \frac{\zeta(1-\alpha+\beta)\zeta(1-\gamma+\delta)}{\zeta(2-\alpha+\beta-\gamma+\delta)}\sum_{d \mid q} \varphi(d)\mu\left(\frac{q}{d}\right)\frac 1{(2\pi i)^3}\int\limits_{(2\varepsilon)}\int\limits_{(\varepsilon)}\int\limits_{(\varepsilon)}C_{\boldsymbol{\alpha}, \boldsymbol{\beta},a,b}\Big(s+\frac {u+v}{2}\Big)\mathcal{G}(s)g_{\alpha, \beta, \gamma, \delta}(s)\tilde{v}_1(u)\tilde{v}_2(v) \\
& \hspace*{2cm} \times \pi^{1/2}(\frac {q}{\pi})^{2s}d^{-(2s+u+v+\alpha+\gamma)}M^uN^v\frac {\zeta(2s+u+v+\alpha+\gamma)\zeta(1+2s+u+v+\beta+\delta)}{a^{1/2-s-u-\alpha}b^{1/2-s-v-\gamma}}\\
& \hspace*{2cm} \times \frac {\Gamma(\frac {2s+u+v+\alpha+\gamma}{2})\Gamma(\frac {1/2-s-u-\alpha}{2})\Gamma(\frac {1/2-s-v-\gamma}{2})}{\Gamma(\frac {1/2+s+u+\alpha}{2})\Gamma(\frac {1/2+s+v+\gamma}{2})\Gamma(\frac {1-2s-u-v-\alpha-\gamma}{2})}\dif v \dif u \frac {\dif s}{s},
\end{split}
\end{align}
 where the difference $R_1-R'_1$ is given on \cite[p. 75]{Liu24} with similar modifications as mentioned in the paragraph above \eqref{Cproduct}. Here the last expression above follows by noting that $R_1-R'_1$ equals certain expression multiplied by $\sum_{d|q}\mu(q/d)\varphi(d)/d$, which equals $0$ in our case. \newline

  We recall that $\alpha, \beta, \gamma, \delta \in \left\{z \in \bC: \Re(z) < \eta/\log q \right\}$ and the condition given in \eqref{scondition}. It follows from these conditions and \cite[Corollary 1.17]{MVa1} that for $\Re(2s+u+v+\beta+\delta)>0$, we have
\begin{align}
\label{zetabound}
\begin{split}
 \zeta(1-\alpha+\beta)\zeta(1-\gamma+\delta), \ \zeta(1+2s+u+v+\beta+\delta) \ll q^{\varepsilon}.
\end{split}
\end{align}
  
  Moreover, by \cite[Theorem 6.7]{MVa1}, we have
\begin{align}
\label{zetainversebound}
\begin{split}
 (\zeta(2-\alpha + \beta -\gamma +\delta))^{-1} \ll q^{\varepsilon}.
\end{split}
\end{align}

 Also, the convexity bound (see \cite[Exercise 3, p. 100]{iwakow}) for $\zeta(s)$ implies that
\begin{align}
\label{zetaest}
\begin{split}
  \zeta(2s+u+v+\alpha+\gamma) \ll & q^{\varepsilon}\left( 1+|\alpha+\gamma| \right)^{(1-\Re(2s+u+v+\alpha+\gamma))/2+\varepsilon}, \quad \mbox{for} \; 0 \leq \Re(2s+u+v+\alpha+\gamma) \leq 1.
\end{split}
\end{align}

 Furthermore, by Stirling’s formula, \cite[(5.112)]{iwakow}, the condition that $\alpha, \beta, \gamma, \delta \in \left\{z \in \bC: \Re(z) < \eta/\log q \right\}$ and \eqref{scondition}, we see that 
\begin{align}
\label{gammaratiobound}
\begin{split}
\frac {\Gamma(\frac {2s+u+v+\alpha+\gamma}{2})\Gamma(\frac {1/2-s-u-\alpha}{2})\Gamma(\frac {1/2-s-v-\gamma}{2})}{\Gamma(\frac {1/2+s+u+\alpha}{2})\Gamma(\frac {1/2+s+v+\gamma}{2})\Gamma(\frac {1-2s-u-v-\alpha-\gamma}{2})}\ll q^{\varepsilon}(1+|\alpha|)^{-\Re(s+u)}(1+|\gamma|)^{-\Re(s+v)}(1+|\alpha+\gamma|)^{-\Re(2s+u+v)-1/2}.
\end{split}
\end{align}

Additionally, for $\Re(s) \geq \varepsilon>0$, we see from \eqref{C0adef}
\begin{align*}
\begin{split}
 C^0_{\boldsymbol{\alpha}, \boldsymbol{\beta},a}(s)-\frac {C^1_{\boldsymbol{\alpha}, \boldsymbol{\beta},a}(s)}{p}+\frac {C^2_{\boldsymbol{\alpha}, \boldsymbol{\beta},a}(s)}{p^2} \ll & 1,  \quad \Big(1-\frac {1}{p^{2-\alpha+\beta-\gamma+\delta}}\Big)^{-1} \ll  1 \quad \mbox{and} \\
 \Big(1-\frac {1}{p^{2s+\alpha+\delta}}\Big)^{-1} \ll & \Big(1-\frac {1}{1000^{\varepsilon}}\Big)^{-1} \ll_{\varepsilon} 1.
\end{split}
\end{align*}
  Using the above, \eqref{Cproduct}, \eqref{Cadef} and following the arguments that leading to \eqref{tauaest}, we obtain that when $\Re(s) \geq \varepsilon>0$,
\begin{align}
\label{Cproductest}
\begin{split}
C_{\boldsymbol{\alpha}, \boldsymbol{\beta},a,b}(s) \ll (ab)^{\varepsilon}.
\end{split}
\end{align}

  We now apply the estimations to see that we may first replace $d^{-(2s+u+v+\alpha+\gamma)}$ in \eqref{Tsum} by $q^{-(2s+u+v+\alpha+\gamma)}$ and replace $\sum_{d \mid q} \varphi(d)\mu\left(\frac{q}{d}\right)$ by $\phis(q)$, incurring an error term of size $O(q^{1+\varepsilon}(ab)^{-1/2}/q_0)$.  Thus,
\begin{align}
\label{Tsum1}
\begin{split}
 T^{+;-+-+}_{+, M,N} &+  T^{-;-+-+}_{+, M,N}\\
=& \phis(q)\frac{\zeta(1-\alpha+\beta)\zeta(1-\gamma+\delta)}{\zeta(2-\alpha+\beta-\gamma+\delta)}\frac 1{(2\pi i)^3}\int\limits_{(2\varepsilon)}\int\limits_{(\varepsilon)}\int\limits_{(\varepsilon)}C_{\boldsymbol{\alpha}, \boldsymbol{\beta},a,b}(s+\frac {u+v}{2})\mathcal{G}(s)g_{\alpha, \beta, \gamma, \delta}(s)\tilde{v}_1(u)\tilde{v}_2(v) \\
& \hspace*{2cm} \times \pi^{1/2-2s}q^{-(u+v+\alpha+\gamma)}M^uN^v\frac {\zeta(2s+u+v+\alpha+\gamma)\zeta(1+2s+u+v+\beta+\delta)}{a^{1/2-s-u-\alpha}b^{1/2-s-v-\gamma}}\\
& \hspace*{2cm} \times \frac {\Gamma(\frac {2s+u+v+\alpha+\gamma}{2})\Gamma(\frac {1/2-s-u-\alpha}{2})\Gamma(\frac {1/2-s-v-\gamma}{2})}{\Gamma(\frac {1/2+s+u+\alpha}{2})\Gamma(\frac {1/2+s+v+\gamma}{2})\Gamma(\frac {1-2s-u-v-\alpha-\gamma}{2})} \dif v \dif u \frac{\dif s}{s}+O\Big(\frac {q^{1+\varepsilon}(ab)^{-1/2}}{q_0}\Big).
\end{split}
\end{align}
 
  Moreover, using \eqref{zetabound}, we see that the sum of the $O$-terms in \eqref{Splusminusplusintegralremovingd} is 
\begin{align*}
\begin{split}
 \ll & \frac{q^{1+\varepsilon}\min (aM, bN)}{\sqrt{MN}}\frac { (ab\big ((1+|\alpha|)(1+|\beta|)(1+|\gamma|)(1+|\delta|)\big))^{1/4}}{(bN)^{1/4}}+\frac{q^{\varepsilon}ab(MN)^{1/2}}{q_0} \\
\ll & q^{1+\varepsilon}(\big ((1+|\alpha|)(1+|\beta|)(1+|\gamma|)(1+|\delta|)\big))^{1/4}\Big (\frac{abM}{\sqrt{MN}}\frac { a^{1/4}}{N^{1/4}}+\frac{ab}{q_0}\Big ) \\
\ll & q^{1+\varepsilon}(ab)^{5/4}\big ((1+|\alpha|)(1+|\beta|)(1+|\gamma|)(1+|\delta|)\big)^{1/4}\Big (\frac{\sqrt{MN}}{N^{5/4}}+\frac{1}{q_0} \Big).
\end{split}
\end{align*}

   Note that similar expressions to \eqref{Splusminusplusintegralremovingd} and the above estimation hold for other  $S^{\pm;****}_{\pm, M,N}$ as well. We then deduce from these the following result. 
\begin{proposition}
\label{S0val} With the notation as above, we have
\begin{align*}
\begin{split}
 S^0_{+, M,N}+S^0_{-, M,N} =& \sum_{****}T^{\pm;****}_{\pm, M,N}  +O\Big(q^{1+\varepsilon}(ab)^{5/4}\big ((1+|\alpha|)(1+|\beta|)(1+|\gamma|)(1+|\delta|)\big)^{1/4}\Big (\frac{\sqrt{MN}}{N^{5/4}}+\frac{1}{q_0}\Big ) \Big ).
\end{split}
\end{align*}
\end{proposition}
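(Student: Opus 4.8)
The plan is to deduce the proposition by running, for each of the eight constituents of $S^0_{+,M,N}+S^0_{-,M,N}$ — the four pieces $S^{\pm;-+-+}_{+,M,N}$, $S^{\pm;-++-}_{+,M,N}$, $S^{\pm;+--+}_{+,M,N}$, $S^{\pm;+-+-}_{+,M,N}$ of $S^0_{+,M,N}$ from \eqref{Splusmain}, together with their counterparts from $S^0_{-,M,N}$ in \eqref{SminusVS} — exactly the chain of reductions already carried out above for the representative term $S^{\pm;-+-+}_{+,M,N}$. For a fixed sign configuration I would first apply the change of variables that rewrites the double integral $\int\!\!\int x^{-\alpha}y^{-\gamma}E^{\pm}(x,y,\ell)\,\dif x\,\dif y$ as an integral over $(x,u)$ (or $(y,u)$), then combine the trivial bound \eqref{Integralbound} with the derivative estimates \eqref{higherderest} and the Duke--Friedlander--Iwaniec bound on $\Delta_\ell$ (as in \eqref{doubleIntegralsimplified}) to replace the $\Delta_\ell$-weighted double integral by the single integral over $x$ appearing on the right of \eqref{doubleIntegralsimplified}, negligibly for $\ell<L_0$. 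The tail $\ell\geq L_0$ is discarded by \eqref{Splusminusplusllarge} and \eqref{Splusminusplusllarge1}, the single-integral expression is extended to all $\ell\geq 1$ at a comparable cost, the Ramanujan-sum identity $c_\ell(hd)=\sum_{ef=\ell,\,f\mid hd}\mu(e)f$ is inserted, and the part with $d\neq 1$, $(f,d)>1$ (which forces $q_0\mid f$) is removed using \eqref{Splusminusplussimplifieddlarge}. This produces, for each configuration, an identity of the shape \eqref{Splusminusplusintegralremovingd}: $S^{\pm;****}_{\pm,M,N}=T^{\pm;****}_{\pm,M,N}+O(\cdots)$, with the two error terms displayed there.

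Next I would identify the secondary main terms $T^{\pm;****}_{\pm,M,N}$. For $T^{\pm;-+-+}_{+,M,N}$ this is done by recognizing it as $2\varphi^{+}(q)\,T^{\pm;-+-+}_{M,N}(\boldsymbol{\alpha},\boldsymbol{\beta})$ in the notation of \cite[pp.\ 71--75]{Liu24} (with $t=0$, $\widetilde W(u)=\tilde v_1(u)$, $\widetilde W(v)=\tilde v_2(v)$), performing the contour manipulations there together with \eqref{chistar}, and noting that the leftover residue difference $R_1-R_1'$ is a fixed expression times $\sum_{d\mid q}\mu(q/d)\varphi(d)/d$, which vanishes for a prime power $q$. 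The other seven configurations are handled identically up to permutations of the shifts $\alpha,\beta,\gamma,\delta$ and of $(a,b)$. This delivers the triple contour integral \eqref{Tsum}, after which the analytic inputs \eqref{zetabound}--\eqref{Cproductest} — convexity for $\zeta$, Stirling for the $\Gamma$-ratio \eqref{gammaratiobound}, and the Euler-product bound \eqref{Cproductest} for $C_{\boldsymbol{\alpha},\boldsymbol{\beta},a,b}$ — allow replacing $d^{-(2s+u+v+\alpha+\gamma)}$ by $q^{-(2s+u+v+\alpha+\gamma)}$ and $\sum_{d\mid q}\varphi(d)\mu(q/d)$ by $\phis(q)$, incurring $O(q^{1+\varepsilon}(ab)^{-1/2}/q_0)$ and yielding \eqref{Tsum1}.

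Finally I would collect the errors: the two $O$-terms in \eqref{Splusminusplusintegralremovingd}, summed over the eight configurations and estimated using $\min(aM,bN)\leq (ab)^{1/2}(MN)^{1/2}$, $\min(aM,bN)(aM+bN)\ll ab\,MN$, the constraint $MN\leq \big((1+|\alpha|)(1+|\beta|)(1+|\gamma|)(1+|\delta|)\big)^{1/2+\varepsilon}q^{2+\varepsilon}$ and $M\leq q^{O(1)}$, together with the $O(q^{1+\varepsilon}(ab)^{-1/2}/q_0)$ from passing to \eqref{Tsum1}, combine into $O\big(q^{1+\varepsilon}(ab)^{5/4}\big((1+|\alpha|)(1+|\beta|)(1+|\gamma|)(1+|\delta|)\big)^{1/4}(\sqrt{MN}/N^{5/4}+1/q_0)\big)$, which is the stated bound.

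I expect the main obstacle to be the bookkeeping in the second step: verifying that every one of the eight sign configurations $S^{\pm;\pm\pm\pm\pm}$ genuinely reduces to the corresponding Liu-type secondary main term with the correct permutation of shifts, and in particular that the vanishing of the residue pieces $R_1-R_1'$ (and their analogues) persists across all configurations precisely because $q$ is a prime power. The analytic estimates themselves are routine once the shift-dependence is tracked, but keeping the powers of $ab$ and of $(1+|\alpha|)\cdots(1+|\delta|)$ uniformly under control across the eight pieces — so that the final error really carries the exponent $5/4$ in $ab$ and $1/4$ in the shift factor — demands some care.
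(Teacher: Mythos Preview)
Your proposal is correct and follows essentially the same route as the paper: the chain of reductions \eqref{Integralbound}--\eqref{Splusminusplusintegralremovingd} applied to each of the eight pieces, followed by the error simplification displayed immediately before the proposition, is exactly what the paper does. One remark: the Liu identification and the passage from \eqref{Tsum} to \eqref{Tsum1} that you include as your second step are not actually part of the proof of Proposition~\ref{S0val} --- the $T^{\pm;****}_{\pm,M,N}$ in the statement are the expressions \emph{defined} in \eqref{Splusminusplusintegralremovingd} (and its analogues), and the contour manipulation with its $O(q^{1+\varepsilon}(ab)^{-1/2}/q_0)$ cost is deferred to Section~8; so your middle paragraph can be dropped here without loss.
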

  
\section{Evaluating $S_{+,1}+S_{-,1}$}

    We recall the definition of $S_{\pm, 1}$ from \eqref{S12def}. We deduce from the conditions $q^{2-2\eta_0} \leq MN  \leq \big ((1+|\alpha|)(1+|\beta|)(1+|\gamma|)(1+|\delta|)\big)^{1/2+\varepsilon}q^{2+\varepsilon}$ and $M \leq  N  \leq Mq^{1-2\eta_1}$ that
\begin{align*}
\begin{split}
 q^{1-\eta_0} \leq N \ll  \big ((1+|\alpha|)(1+|\beta|)(1+|\gamma|)(1+|\delta|)\big)^{1/4+\varepsilon}q^{3/2-\eta_1+\varepsilon}.
\end{split}
\end{align*}

   We then deduce from this, \eqref{SplusVS}, \eqref{Splusmain}, \eqref{SminusVS}, Proposition \ref{Eval} and Proposition \ref{S0val} that
\begin{align}
\label{S1sum}
S_{+, 1}+S_{-, 1} =  \sum_{\substack{M,N \ll \log q \\ q^{2-2\eta_0} \leq MN  \leq \big ((1+|\alpha|)(1+|\beta|)(1+|\gamma|)(1+|\delta|)\big)^{1/2+\varepsilon}q^{2+\varepsilon} \\ N  \ll Mq^{1-2\eta_1} } }\sum_{****}T^{\pm;****}_{\pm, M,N} +E 
\end{align}
where
\[ E \ll q^{\varepsilon} (1+|\alpha|)^4(1+|\beta|)^4(1+|\gamma|)^4 (1+|\delta|)^4 (ab)^{7}e^{\pi(|\Im(\beta-\alpha)|+|\Im
(\delta-\gamma)|)/2}  (q^{1+\theta-\eta_1}  + q^{7/8-\eta_1/4}+q^{3/4+5\eta_0/4}+q^{1-1/n_0}). \] 

   We now want to extend the sums of $T^{\pm;****}_{\pm, M,N}$ to all $M, N$. To do so, we need the following result from the proof of \cite[Lemma 6.2]{Young2011}. 
\begin{lemma}
\label{LemmaPartition}
 Let $F(s_1,s_2)$ be a holomorphic function in the strip $a<\Re e (s_i)<b$ with $a<0<b$ that decays rapidly to zero in each variable (in the imaginary direction). Then we have 
$$\sum_{M,N}\frac{1}{(2\pi i)^2}\int\limits_{(c_1)}\int\limits_{(c_2)}\tilde{v}_1(u)\tilde{v}_2(v)F(u,v) \dif u \dif v = F(0,0).$$
\end{lemma}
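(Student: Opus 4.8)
This is \cite[Lemma 6.2]{Young2011}, and the plan is to recall the Mellin-analytic argument behind it, which simply reassembles the dyadic partition of unity $\sum_{M}v_1(x/M)=1$. It is enough to prove the one-variable statement $\sum_{M}\frac1{2\pi i}\int_{(c_1)}\tilde v_1(u)M^u F(u,v)\,\dif u=F(0,v)$ for each fixed $v$ (with $M$ running over dyadic values and the weight $M^u$ being the Mellin transform $\int_0^\infty v_1(x/M)x^{u-1}\dif x=M^u\tilde v_1(u)$); applying it in $u$ and then the same identity in $v$ to $v\mapsto F(0,v)$ gives $F(0,0)$. Recall that $\tilde v_1$ is entire, since $v_1$ is smooth with support in $[1/2,2]$, and fix $0<c_1<b$.

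First I would shift the $u$-contour from $\Re(u)=c_1$ down to a line $\Re(u)=-c_1'$ with $a<-c_1'<0$. No poles are crossed, since $\tilde v_1(u)M^uF(u,v)$ is holomorphic in the strip, and the horizontal segments vanish in the limit by the rapid decay of $\tilde v_1$ and $F$. On the new line $|2^u|<1$, so the dyadic sum $\sum_{M}M^u=(1-2^u)^{-1}=:Z(u)$ converges absolutely against the rapidly decaying integrand and can be moved inside the integral; this rewrites the left-hand side as $\frac1{2\pi i}\int_{(-c_1')}\tilde v_1(u)Z(u)F(u,v)\,\dif u$.

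Next I would move the contour back up to $\Re(u)=c_1$, now crossing the simple poles of $Z$ at the points $u\in\frac{2\pi i}{\log 2}\mathbb{Z}$, all lying on $\Re(u)=0$, each with residue $-1/\log 2$. Two elementary properties of the partition of unity enter here: writing $g(x)=v_1(e^x)$, its periodization $\sum_{j\in\mathbb{Z}}g(x-j\log 2)$ is the constant function $1$, which forces $\tilde v_1(0)=\int_0^\infty v_1(x)\,\dif x/x=\log 2$ and $\tilde v_1(2\pi i k/\log 2)=0$ for $k\neq 0$. Hence only the residue at $u=0$ survives, equal to $\tilde v_1(0)F(0,v)\cdot(-1/\log 2)=-F(0,v)$, so that $\frac1{2\pi i}\int_{(-c_1')}\tilde v_1 Z F\,\dif u=\frac1{2\pi i}\int_{(c_1)}\tilde v_1 Z F\,\dif u+F(0,v)$. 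On the line $\Re(u)=c_1>0$ one has $Z(u)=-\sum_{M\geq 2}M^{-u}$, and each resulting term $\frac1{2\pi i}\int_{(c_1)}\tilde v_1(u)M^{-u}F(u,v)\,\dif u$ vanishes, since it evaluates $v_1$ at arguments $\geq 2$, outside its support (recalling that the underlying sums over $m,n$ run over positive integers, so $F$ is, in each Mellin variable, a continuation of a Dirichlet series). This proves the one-variable identity, and iterating in $v$ completes the proof.

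The one genuine point of care — everything else being routine — is the bookkeeping of the two contour displacements (uniformly in $M$), the interchange of $\sum_M$ with the integral, and the vanishing of the residual integral over $\Re(u)=c_1$; these details are already carried out in the proof of \cite[Lemma 6.2]{Young2011}, which I would cite.
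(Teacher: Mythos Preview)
The paper does not prove this lemma; it merely records it as a consequence of \cite[Lemma 6.2]{Young2011}, which is exactly what you end up citing. So at the level of ``approach'' you and the paper agree.

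Your sketch of the underlying argument is essentially right, but one step deserves a cleaner justification. The claim that the residual integral $\frac{1}{2\pi i}\int_{(c_1)}\tilde v_1(u)Z(u)F(u,v)\,\dif u$ vanishes leans on $F$ being (the continuation of) a Dirichlet series in each variable---as you yourself note parenthetically---which is not part of the lemma's hypotheses. For the abstract statement one avoids this altogether by letting $M$ run over all $2^k$, $k\in\mathbb{Z}$: the tail $k\ge 0$ is summed on $\Re(u)=-c_1'$ to give $(1-2^u)^{-1}$, while the tail $k<0$ is summed on $\Re(u)=c_1$ to give $-(1-2^u)^{-1}$, and their difference is exactly $-\sum\mathrm{Res}=F(0,v)$ by the residue calculation you already did, with no leftover integral to dispose of. In the paper's application the terms with $M<1$ contribute nothing anyway (since $v_1(m/M)=0$ for integers $m\ge 1$), so the two formulations coincide there; but as a proof of the lemma as stated, the two-sided version is the honest one.
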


   We also need the following result concerns the size of the sums of $T^{\pm;****}_{\pm, M,N}$. 
\begin{lemma}
\label{LemmeAdding} 
 The first expression on the right-hand side of \eqref{Tsum1} is
\begin{align}
\label{Tsumbound}
\begin{split}
\ll & q^{1+\varepsilon}\min\left\{ (1+|\gamma|)^{1/2}\left(\frac{M}{N}\right)^{1/2} , \left( \frac{(1+|\alpha|)\cdot (1+|\beta|)\cdot (1+|\gamma|)\cdot (1+|\delta|)^{1/2}q^2}{MN}\right)^{C} , \frac{(MN)^{1/2}}{q}\right\},
\end{split}
\end{align}
where in the second estimation, the implied constant depends on $C$.
\end{lemma}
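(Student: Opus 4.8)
The plan is to estimate the triple contour integral that forms the first term on the right-hand side of \eqref{Tsum1} by moving the $u$- and $v$-contours to suitably chosen lines, in the spirit of the contour-shifting arguments used to control the analogous dual sums in \cite[Section 6]{Young2011} and \cite[Section 6.4]{Liu24}. The preliminary step is to tidy up the integrand: by the functional equation of $\zeta(s)$ the combination $\pi^{1/2-2s}\,\zeta(2s+u+v+\alpha+\gamma)\,\Gamma\bigl(\tfrac{2s+u+v+\alpha+\gamma}{2}\bigr)\big/\Gamma\bigl(\tfrac{1-2s-u-v-\alpha-\gamma}{2}\bigr)$ collapses to $\pi^{u+v+\alpha+\gamma}\,\zeta(1-2s-u-v-\alpha-\gamma)$, so the integrand becomes $\phis(q)\,\tfrac{\zeta(1-\alpha+\beta)\zeta(1-\gamma+\delta)}{\zeta(2-\alpha+\beta-\gamma+\delta)}$ times $C_{\boldsymbol{\alpha},\boldsymbol{\beta},a,b}\bigl(s+\tfrac{u+v}{2}\bigr)\mathcal{G}(s)g_{\alpha,\beta,\gamma,\delta}(s)\tilde{v}_1(u)\tilde{v}_2(v)/s$, the elementary factor $(\pi/q)^{u+v+\alpha+\gamma}M^uN^v a^{s+u+\alpha-1/2}b^{s+v+\gamma-1/2}$, the two zeta values $\zeta(1-2s-u-v-\alpha-\gamma)$ and $\zeta(1+2s+u+v+\beta+\delta)$, and the Gamma quotient $\Gamma\bigl(\tfrac{1/2-s-u-\alpha}{2}\bigr)\Gamma\bigl(\tfrac{1/2-s-v-\gamma}{2}\bigr)\big/\Gamma\bigl(\tfrac{1/2+s+u+\alpha}{2}\bigr)\Gamma\bigl(\tfrac{1/2+s+v+\gamma}{2}\bigr)$. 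Here $\phis(q)\ll q$, the zeta prefactor is $\ll q^\varepsilon$ by \eqref{zetabound} and \eqref{zetainversebound}, $C_{\boldsymbol{\alpha},\boldsymbol{\beta},a,b}\ll (ab)^\varepsilon$ by \eqref{Cproductest}, and $\mathcal{G}$, $\tilde{v}_1$, $\tilde{v}_2$ decay faster than any polynomial along vertical lines; thus everything converges absolutely and it remains to bound the integrand pointwise on the chosen horizontal lines.

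For the first estimate I would shift the $u$-line to $\Re(u)=\tfrac12-\varepsilon'$ and the $v$-line to $\Re(v)=-\tfrac12+\varepsilon'$, with $2\varepsilon<\varepsilon'<\tfrac12$ fixed small. No poles are crossed: the poles of $\Gamma\bigl(\tfrac{1/2-s-u-\alpha}{2}\bigr)$ remain to the right of the new $u$-line, the poles of the two $\zeta$-values lie at $\Re(u+v)<0$, well away from the relevant range, and the remaining $\zeta$-pole stays to the left. On the new lines $M^uN^v\ll q^\varepsilon(M/N)^{1/2}$, $a^{s+u+\alpha-1/2}b^{s+v+\gamma-1/2}\ll (ab)^\varepsilon$, while $\zeta(1-2s-u-v-\alpha-\gamma)$ has argument of real part close to $1$ so it is $\ll q^\varepsilon$ by convexity (using \eqref{scondition} to absorb the shift dependence), $\zeta(1+2s+u+v+\beta+\delta)\ll q^\varepsilon$, and Stirling's formula yields $\Gamma\bigl(\tfrac{1/2-s-u-\alpha}{2}\bigr)/\Gamma\bigl(\tfrac{1/2+s+u+\alpha}{2}\bigr)\ll q^\varepsilon(1+|\alpha|)^{-1/2}$ and $\Gamma\bigl(\tfrac{1/2-s-v-\gamma}{2}\bigr)/\Gamma\bigl(\tfrac{1/2+s+v+\gamma}{2}\bigr)\ll q^\varepsilon(1+|\gamma|)^{1/2}$, the polynomial growth in the imaginary directions being absorbed by the decay of $\mathcal{G}$ and $\tilde{v}_1,\tilde{v}_2$. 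Multiplying gives $q^{1+\varepsilon}(1+|\gamma|)^{1/2}(M/N)^{1/2}$. The third estimate is obtained identically, but moving both lines to $\Re(u)=\Re(v)=\tfrac12-\varepsilon'$ (again no poles crossed): now $M^uN^v q^{-\Re(u+v)}\ll q^\varepsilon(MN)^{1/2}/q$, and although convexity for $\zeta(1-2s-u-v-\alpha-\gamma)$, whose argument now has small positive real part, only gives $\ll q^\varepsilon(1+|\alpha+\gamma|)^{1/2}$, this is exactly cancelled by the Gamma quotient, which on these lines is $\ll q^\varepsilon(1+|\alpha|)^{-1/2}(1+|\gamma|)^{-1/2}$; the result is $q^{1+\varepsilon}(MN)^{1/2}/q$.

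For the second, rapid-decay estimate, needed when $MN$ is large, I would return to the pre-Mellin expression \eqref{Splusminusplusintegralremovingd}, which by the exact identity \eqref{Tsum} represents the same quantity up to the $O$-term already isolated in \eqref{Tsum1}. On the support of $W_1$ and $W_2$ one has $x(hd-x)\asymp abMN\gg MN$, so \eqref{W} gives $V_{\alpha,\beta,\gamma,\delta}\bigl(\tfrac{\pm x(hd-x)}{q^2}\bigr)\ll_C\bigl((1+|\alpha|)(1+|\beta|)(1+|\gamma|)(1+|\delta|)\bigr)^{C}\bigl(q^2/(MN)\bigr)^{C}$ for any $C$; bounding the remaining absolutely convergent sums over $d$, $e$, $f$, $h$ trivially, and noting that the replacement of the $d$-sum by $\phis(q)$ in \eqref{Tsum1} inherits the same decay since the $V$-factor is unchanged, yields the claimed bound. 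Alternatively, one can see this directly by shifting the $u$- and $v$-lines to the left, crossing the two simple $\zeta$-poles at $2s+u+v+\alpha+\gamma=0$ and $2s+u+v+\beta+\delta=0$, whose residues are explicit and of the same or smaller size, and gaining $(q^2/(MN))^{C}$ from $M^uN^vq^{-\Re(u+v)}$ on the shifted lines. I expect the main work to lie in this bookkeeping: deciding how far each contour may be moved before the Gamma factors begin to grow, controlling the residues collected en route, and keeping the Stirling asymptotics precise enough that the shift parameters enter only through the harmless $q^\varepsilon$ (via \eqref{scondition}), except for the single explicit factor $(1+|\gamma|)^{1/2}$ in the first estimate and the prescribed polynomial in the second. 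The first and third estimates, by contrast, are routine once the functional equation has been applied and the pole configuration identified.
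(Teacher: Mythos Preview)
Your treatment of the first and third bounds is correct and essentially identical to the paper's: one shifts $\Re(u)$ to roughly $\tfrac12$ and $\Re(v)$ to roughly $-\tfrac12$ (respectively $+\tfrac12$), crossing no poles, and reads off the bound from the elementary factor $(M/q)^{\Re u}(N/q)^{\Re v}$ together with the Stirling and convexity estimates \eqref{zetabound}--\eqref{gammaratiobound}. The functional-equation simplification you begin with is a tidy preliminary step, though the paper works directly with the original integrand.

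For the second bound the paper proceeds differently, and your two proposals each have a gap. The paper's device is to shift $s$ to the \emph{right} in tandem with $u,v$ moving left, iterating so that at the $j$-th step $\Re(s)\approx j/2$ and $\Re(u)=\Re(v)\approx -j/2$. The point is that every delicate factor in the integrand --- the two $\zeta$-values, the Gamma quotient, and crucially $C_{\boldsymbol\alpha,\boldsymbol\beta,a,b}\bigl(s+\tfrac{u+v}{2}\bigr)$ --- depends only on the combinations $s+u$, $s+v$, $2s+u+v$, which remain essentially fixed (small and positive) throughout. Thus no poles are crossed and the estimates \eqref{zetabound}--\eqref{Cproductest} stay valid at every step; the entire gain $(q^2/MN)^{j/2}$ comes from $q^{-(u+v)}M^uN^v$, and the shift-dependence $\bigl((1+|\alpha|)\cdots(1+|\delta|)\bigr)^{j/4}$ enters only through $g_{\alpha,\beta,\gamma,\delta}(s)$ via \eqref{gest}.

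Your pre-Mellin argument bounds the first term of \eqref{Tsum}, not of \eqref{Tsum1}; the two differ by the $O\bigl(q^{1+\varepsilon}(ab)^{-1/2}/q_0\bigr)$ error incurred when the $d$-sum is replaced by $\phis(q)$, and that error carries no decay in $MN$. Your parenthetical claim that the replacement ``inherits the same decay'' would amount to re-running the Mellin computation of \cite[pp.~71--75]{Liu24} with $d$ formally set to $q$ in the integral of \eqref{Splusminusplusintegralremovingd} --- plausible, but not what you have actually invoked. Your alternative of shifting only $u,v$ far left drives $\Re\bigl(s+\tfrac{u+v}{2}\bigr)$ negative, where \eqref{Cproductest} fails (indeed $C_{\boldsymbol\alpha,\boldsymbol\beta,a,b}$ grows like $(ab)^{O(C)}$ there, cf.\ \eqref{C0adef}), and it crosses the pole of $\zeta(1+2s+u+v+\beta+\delta)$, whose residue you would then have to control. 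The simultaneous $(s,u,v)$-shift sidesteps both issues cleanly.
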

\begin{proof} 
 Our proof follows from that of \cite[Lemma 5.5]{Z2019}.
  We want to estimate the last expression in \eqref{Tsum1} by shifting contours of the triple integration there. To do so, we first want to bound the various factors appearing in the last expression in \eqref{Tsum}. Due to the rapid decay of the functions $\mathcal{G}(s), \tilde{v}_1(u), \tilde{v}_2(v)$ on the vertical line (here we note this is trivial for $\mathcal{G}(s)$ and easily verified for $\tilde{v}_1(u), \tilde{v}_2(v)$), we shall therefore only bound the related factors in terms of $\alpha, \beta, \gamma, \delta$, and this has been done in \eqref{zetabound}--\eqref{gammaratiobound}. \newline

 We now shift the $u$-contour to $\Re (u)=1/2-2\varepsilon+2\eta/\log q$ and the $v$-contour to $\Re (v)=-1/2+2\varepsilon$ in the last triple integration \eqref{Tsum}. In this process, we note that the conditions lead to the estimations given in \eqref{zetabound}--\eqref{gammaratiobound} and \eqref{Cproductest} are all satisfied. This way together with the estimation for $g_{\alpha, \beta, \gamma, \delta}(s)$ given in \eqref{gest}, we see that the first bound given in \eqref{Tsumbound}. \newline
  
For the second bound, we fix a constant $C>1$ and we first shift the contour to $\Re (s)=1/2+2\eta/\log q+\varepsilon$ and then to $\Re (u), \Re(v)=-1/2+\varepsilon/4^{2C}$. We next shift the contour to $\Re (s)=1-2\varepsilon/4^{2C}+2\eta/\log q+\varepsilon$ and then $\Re(u)=\Re(v)=-1+4\varepsilon/4^{2C}$. We then shift the contour to $\Re (s)=3/2-8\varepsilon/4^{2C}+2\eta/\log q+\varepsilon$ and then to $\Re (u)=\Re(v)=-3/2+16\varepsilon /4^{2C}$. It follows that after the $j^{th}$ step, we arrive at ($j\geqslant 2$)
$$\Re  (s)=\frac{j}{2}-\frac{4^{j-1}\varepsilon}{2\cdot4^{2C}}+\frac {2\eta}{\log q}+\varepsilon \hspace{0.4cm} \mathrm{and} \hspace{0.4cm} \Re (u)=\Re(v)=-\frac{j}{2}+\frac{4^{j-1}\varepsilon}{4^{2C}}.$$
Note that in the above processes, the conditions lead to the estimates in \eqref{zetabound}-- \eqref{Cproductest} are all satisfied. This now leads to the second bound given in \eqref{Tsumbound}. \newline

Lastly, the third  bound given in \eqref{Tsumbound} follows by shifting $\Re (u), \Re(v)$ to $1/2-\eta/\log q-2\varepsilon$ while applying the bounds in \eqref{gest}, \eqref{zetabound}--\eqref{Cproductest}. This completes the proof of the lemma. 
\end{proof}

  We now apply \eqref{S12def}, Lemma \ref{LemmaPartition} (and its analogue for the sum $T^{+;+-+-}_{-, M,N}+  T^{-;+-+-}_{-, M,N}$), Lemma \ref{LemmeAdding}, \eqref{Tsum1} and proceed as \cite[p. 76-77]{Liu24}.  This leads to
\begin{align}
\label{Tsumresult}
\begin{split}
& \sum_{\substack{M,N \ll \log q \\ q^{2-2\eta_0} \leq MN  \leq \big ((1+|\alpha|)(1+|\beta|)(1+|\gamma|)(1+|\delta|)\big)^{1/2+\varepsilon}q^{2+\varepsilon} \\ N  \ll Mq^{1-2\eta_1} } } (T^{+;-+-+}_{+, M,N}+  T^{-;-+-+}_{+, M,N}+T^{+;+-+-}_{-, M,N}+  T^{-;+-+-}_{-, M,N})\\
& = \sum_{M,N}(T^{+;-+-+}_{+, M,N}+  T^{-;-+-+}_{+, M,N}+T^{+;+-+-}_{-, M,N}+  T^{-;+-+-}_{-, M,N})\\
&\hspace*{2cm} +O(( \sqrt{1+|\alpha|}+\sqrt{1+|\beta|}+\sqrt{1+|\gamma|}+\sqrt{1+|\delta|})q^{1/2+\eta_1+\varepsilon}+q^{1-\eta_0}) \\
&= \phis(q)X_{\alpha,\gamma}\frac{C_{\boldsymbol{\alpha}, \boldsymbol{\beta},a,b}(0)}{a^{1/2 -\alpha} b^{1/2 -\gamma}} 
 \frac{\zeta(1-\alpha -\gamma) \zeta(1-\gamma +\delta) \zeta(1 -\alpha + \beta) \zeta(1+\beta + \delta)}{\zeta(2-\alpha + \beta -\gamma +\delta)} \\
& \hspace*{2cm}+O\Big( ( \sqrt{1+|\alpha|}+\sqrt{1+|\beta|}+\sqrt{1+|\gamma|}+\sqrt{1+|\delta|})q^{1/2+\eta_1+\varepsilon}+q^{1-\eta_0+\varepsilon}+\frac {q^{1+\varepsilon}(ab)^{-1/2}}{q_0} \Big).
\end{split}
\end{align}
 
  We deduce from \eqref{C0adef} that
\begin{align}
\label{C0exp}
\begin{split}
  C_{\boldsymbol{\alpha}, \boldsymbol{\beta},a}(0)= \prod_{p^{l_a}\|a} & \Big(1-\frac {1}{p^{2-\alpha+\beta-\gamma+\delta}} \Big)^{-1} \Big(1-\frac {1}{p^{\alpha+\delta}} \Big)^{-1} \\
& \times \Big(1-\frac {1}{p^{(1+l_a)(\alpha+\delta)}}-\frac {1}{p}\Big(p^{\gamma-\delta}+\frac {1}{p^{\beta+\delta}}\Big)\Big(1-\frac {1}{p^{l_a(\alpha+\delta)}}\Big)+\frac {1}{p^{2-\alpha+\beta-\gamma+\beta}}\Big(\frac {1}{p^{\alpha+\delta}}-\frac {1}{p^{l_a(\alpha+\delta)}}\Big)\Big),
\end{split}
\end{align}

  Note that
\begin{align}
\label{C0exp1}
\begin{split}
 & \Big(1-\frac {1}{p^{(1+l_a)(\alpha+\delta)}}-\frac {1}{p}\Big(p^{\gamma-\delta}+\frac {1}{p^{\beta+\delta}}\Big)\Big(1-\frac {1}{p^{l_a(\alpha+\delta)}}\Big)+\frac {1}{p^{2-\alpha+\beta-\gamma+\beta}}\Big(\frac {1}{p^{\alpha+\delta}}-\frac {1}{p^{l_a(\alpha+\delta)}}\Big)\Big) \\
=& \Big(1-\frac {1}{p^{2-\alpha+\beta-\gamma+\delta}}\Big)\Big(1-\frac {1}{p^{\alpha+\delta}}\Big)+\Big(1-\frac {1}{p^{l_a(\alpha+\delta)}}\Big)\Big(\frac {1}{p^{\alpha+\delta}}-\frac {1}{p}\Big(p^{\gamma-\delta}+\frac {1}{p^{\beta+\delta}}\Big)+\frac {1}{p^{2-\alpha+\beta-\gamma+\delta}}\Big) \\
=& \Big(1-\frac {1}{p^{2-\alpha+\beta-\gamma+\delta}}\Big)\Big(1-\frac {1}{p^{\alpha+\delta}}\Big)+\frac {1}{p^{\alpha+\delta}}\Big(1-\frac {1}{p^{l_a(\alpha+\delta)}}\Big)\Big(1-\frac {1}{p^{1-\alpha-\gamma}}\Big)\Big(1-\frac {1}{p^{1-\alpha+\beta}}\Big). 
\end{split}
\end{align}  

Now from \eqref{C0exp} and \eqref{C0exp1}, we infer
\begin{align}
\label{C0exptau}
\begin{split}
  C_{\boldsymbol{\alpha}, \boldsymbol{\beta},a}(0)=& \prod_{p^{l_a}\|a}\Big (1+\frac {p^{-\alpha-\delta}(p^{-(\alpha+\delta)l_a}-1)\zeta_p(2-\alpha+\beta-\gamma+\delta)}{(p^{-\alpha-\delta}-1)\zeta_p(1-\alpha-\gamma)\zeta_p(1-\alpha+\beta)} \Big )
=\tau_{-\gamma, \beta, -\alpha, \delta}(a).
\end{split}
\end{align}

  Similarly,
\begin{align}
\label{C0exptau1}
\begin{split}
  C_{\boldsymbol{\beta}, \boldsymbol{\alpha},b}(0)=& 
=\tau_{-\alpha, \delta, -\gamma, \beta}(b).
\end{split}
\end{align}

   We conclude from \eqref{Tsumresult}, \eqref{C0exptau} and \eqref{C0exptau1} that
\begin{align}
\label{Tsumexp}
\begin{split}
& \sum_{\substack{M,N \ll \log q \\ q^{2-2\eta_0} \leq MN  \leq \big ((1+|\alpha|)(1+|\beta|)(1+|\gamma|)(1+|\delta|)\big)^{1/2+\varepsilon}q^{2+\varepsilon} \\ N  \ll Mq^{1-2\eta_1} } }(T^{+;-+-+}_{+, M,N}+  T^{-;-+-+}_{+, M,N}+T^{+;+-+-}_{-, M,N}+  T^{-;+-+-}_{-, M,N}) \\
 = & 2S_2+O\Big( ( \sqrt{1+|\alpha|}+\sqrt{1+|\beta|}+\sqrt{1+|\gamma|}+\sqrt{1+|\delta|})q^{1/2+\eta_1+\varepsilon}+q^{1-\eta_0+\varepsilon}+\frac {q^{1+\varepsilon}(ab)^{-1/2}}{q_0}\Big),
\end{split}
\end{align}
  where $S_2$ is given in \eqref{Sdef}. Similarly, the sums of the other terms $T^{\pm;****}_{\pm, M,N}$ yield the sums $S_3+S_4+S_5$ on the right-hand side expression of \eqref{4thmomenteval} with an error term of the same size. Thus, we derive from \eqref{S1sum} and \eqref{Tsumexp} the following evaluation of $ S_{+, 1}+S_{-, 1}$.
\begin{proposition}
\label{Splusoneval} With the notation as above, we have
\begin{align*}
\begin{split}
   S_{+, 1}+S_{-, 1} 
 = &  2\sum^5_{j=2}S_j+O(q^{\varepsilon} (1+|\alpha|)^4(1+|\beta|)^4(1+|\gamma|)^4 (1+|\delta|)^4 (ab)^{7}e^{\pi(|\Im(\beta-\alpha)|+|\Im
(\delta-\gamma)|)/2} \\
& \hspace*{3cm} \times (q^{1+\theta-\eta_1}+q^{7/8-\eta_1/4}+q^{3/4+5\eta_0/4}+q^{1/2+\eta_1}+q^{1-1/n_0} )).
\end{split}
\end{align*}
\end{proposition}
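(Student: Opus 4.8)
The statement is the closing bookkeeping step of this section, so the plan is to assemble it from the ingredients already in place. By \eqref{S1sum} --- which itself packages Propositions \ref{Eval} and \ref{S0val} --- we already have $S_{+, 1}+S_{-, 1}$ written, up to the error $E$ recorded there, as the constrained double sum over $M,N$ in the range $q^{2-2\eta_0}\le MN\le \big((1+|\alpha|)(1+|\beta|)(1+|\gamma|)(1+|\delta|)\big)^{1/2+\varepsilon}q^{2+\varepsilon}$, $N\ll Mq^{1-2\eta_1}$, of the four families $T^{\pm;****}_{\pm, M,N}$. So the remaining work is to evaluate these constrained $M,N$-sums. I would handle the four groupings one at a time: the grouping $T^{+;-+-+}_{+, M,N}+T^{-;-+-+}_{+, M,N}+T^{+;+-+-}_{-, M,N}+T^{-;+-+-}_{-, M,N}$ is treated exactly by \eqref{Tsum1}--\eqref{Tsumexp}, and the three remaining ones (producing $S_3, S_4, S_5$) by the same argument, so I describe only the first.

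First I would complete the double sum to all $M,N$. In the Mellin form \eqref{Tsum1} the $M,N$-dependence sits entirely in $\tilde{v}_1(u)\tilde{v}_2(v)M^uN^v$, which is precisely the shape to which Lemma \ref{LemmaPartition} applies; summing over all $M,N$ collapses the $u,v$-integrals and yields the closed form in \eqref{Tsumresult}. The discrepancy between the completed sum and the constrained one --- the tail where $(M,N)$ lies outside the stated range --- is controlled by Lemma \ref{LemmeAdding}: its first bound, carrying the $(M/N)^{1/2}$ factor, handles the far-apart range $N\gg Mq^{1-2\eta_1}$ and contributes $\ll (1+|\gamma|)^{1/2}q^{1/2+\eta_1+\varepsilon}$, while its third bound, carrying the $(MN)^{1/2}/q$ factor, handles the small-$MN$ range $MN\ll q^{2-2\eta_0}$; since only $O(\log^2 q)$ dyadic pairs $(M,N)$ are relevant, summing trivially and tracking the polynomial dependence on the shifts gives an admissible error.

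Second I would evaluate the completed triple integral in \eqref{Tsum1} by contour shifting, as in the proof of Lemma \ref{LemmeAdding} and as on \cite[p. 71--75]{Liu24}: after collapsing $u,v$ via Lemma \ref{LemmaPartition} one shifts the $s$-contour to the left of $s=0$, and --- with $\mathcal{G}(s)=P_{\alpha,\beta,\gamma,\delta}(s)\exp(s^2)$ and the vanishing conditions on $P_{\alpha,\beta,\gamma,\delta}$ imposed after Lemma \ref{lemafe} cancelling the spurious poles --- only the simple pole of $1/s$ at $s=0$ contributes the main term, giving $\phis(q)X_{\alpha,\gamma}\,C_{\boldsymbol{\alpha}, \boldsymbol{\beta},a,b}(0)\,a^{-(1/2-\alpha)}b^{-(1/2-\gamma)}$ times the ratio of zeta values displayed in \eqref{Tsumresult} (the remaining gamma quotient at $s=u=v=0$ collapsing into $X_{\alpha,\gamma}$ by the functional equation of $\zeta$). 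I then identify the arithmetic factor through the Euler-product manipulation \eqref{C0exp}--\eqref{C0exptau1}: pulling $\zeta_p(2-\alpha+\beta-\gamma+\delta)\zeta_p(\alpha+\delta)$ out of \eqref{C0exp} and regrouping via \eqref{C0exp1} gives $C_{\boldsymbol{\alpha}, \boldsymbol{\beta},a}(0)=\tau_{-\gamma, \beta, -\alpha, \delta}(a)$, and symmetrically $C_{\boldsymbol{\beta}, \boldsymbol{\alpha},b}(0)=\tau_{-\alpha, \delta, -\gamma, \beta}(b)$, so by \eqref{Cproduct} the completed grouping equals $2S_2$. Running the same computation for the other three groupings produces $2S_3+2S_4+2S_5$ with errors of the same shape. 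Finally, consolidating the errors --- the $E$ of \eqref{S1sum} together with the $(\sqrt{1+|\alpha|}+\sqrt{1+|\beta|}+\sqrt{1+|\gamma|}+\sqrt{1+|\delta|})q^{1/2+\eta_1+\varepsilon}$, $q^{1-\eta_0+\varepsilon}$ and $q^{1+\varepsilon}(ab)^{-1/2}/q_0\ll q^{1-1/n_0+\varepsilon}(ab)^{\varepsilon}$ terms from \eqref{Tsumexp} and its analogues, with $q^{1-\eta_0}$ joining the displayed list and everything else absorbed into the $(1+|\alpha|)^4(1+|\beta|)^4(1+|\gamma|)^4(1+|\delta|)^4(ab)^7$ allowance --- gives the stated formula.

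The main obstacle I anticipate is keeping the error budget uniform in the shifts and in $a,b$: each contour shift and each use of \eqref{gammaratiobound}, \eqref{zetaest}, \eqref{zetabound}, \eqref{zetainversebound} and \eqref{Cproductest} injects powers of $1+|\alpha|$, $1+|\gamma|$, $1+|\alpha+\gamma|$ and of $ab$, and one must verify these never break the $(1+|\alpha|)^4(1+|\beta|)^4(1+|\gamma|)^4(1+|\delta|)^4(ab)^7$ allowance once the normalizations $a^{-(1/2-\alpha)}b^{-(1/2-\gamma)}$, the weights $(a,ef)^{1-\alpha+\beta}(b,ef)^{1-\gamma+\delta}$ and the factor $C_{\boldsymbol{\alpha}, \boldsymbol{\beta},a,b}(0)$ are all accounted for; beyond this the proof is an assembly of Propositions \ref{Eval}, \ref{S0val}, Lemmas \ref{LemmaPartition}, \ref{LemmeAdding} and the identities \eqref{Tsum1}--\eqref{C0exptau1} already established.
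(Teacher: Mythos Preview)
Your proposal is correct and follows essentially the same approach as the paper: assemble \eqref{S1sum} from Propositions \ref{Eval} and \ref{S0val}, complete the $M,N$-sum via Lemma \ref{LemmaPartition}, bound the tails with Lemma \ref{LemmeAdding}, extract the residue at $s=0$ from \eqref{Tsum1} (identifying the gamma quotient times $\zeta(\alpha+\gamma)$ with $X_{\alpha,\gamma}\zeta(1-\alpha-\gamma)$ via the functional equation), and then recognise $C_{\boldsymbol{\alpha},\boldsymbol{\beta},a,b}(0)$ as the product of $\tau$-factors through \eqref{C0exp}--\eqref{C0exptau1}. The only small omission is that you do not mention the second bound of Lemma \ref{LemmeAdding}, which is what disposes of the dyadic pairs with $MN$ larger than $\big((1+|\alpha|)(1+|\beta|)(1+|\gamma|)(1+|\delta|)\big)^{1/2+\varepsilon}q^{2+\varepsilon}$ when you complete the sum to all $M,N$; but this contributes negligibly for large $C$ and does not affect the stated error, so the argument goes through as you describe.
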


\section{Proof of Theorem \ref{twisted_fourth_moment_theorem}}
\label{sec 10}

   We deduce from Proposition \ref{Seval}, Propositions \ref{Dsum}, \ref{S2eval} and \ref{Splusoneval}, recalling that $\alpha, \beta, \gamma, \delta \in \left\{z \in \bC: \Re(z) < \eta/\log q \right\}$, under the condition  in \eqref{abcond}, 
\begin{align}
\label{Sasymp}
\begin{split}
 \mathcal{S}(\alpha, \beta, \gamma, \delta; a,b) =& \sum^6_{j=1}S_j+O(q^{\varepsilon} (1+|\alpha|)^4(1+|\beta|)^4(1+|\gamma|)^4 (1+|\delta|)^4 (ab)^{7} e^{\pi(|\Im(\beta-\alpha)|+|\Im
(\delta-\gamma)|)/2} \mathcal{R}).
\end{split}
\end{align}
  where $S_j$ are given as in \eqref{Sdef} and
\begin{align}
\label{Rdef}
\begin{split}
 \mathcal{R} =q^{1+\theta-\eta_1}+q^{7/8-\eta_1/4}+q^{3/4+5\eta_0/4}+q^{1/2+\eta_1}+ q^{1-\eta_0}+q^{3/4+\eta_1/2}q^{1/2}_1+ q^{3/4+\eta_1}q^{1/4}_1+\frac {q^{1+\eta_0/2+\eta_1/2}}{q^{1/4}_1} +q^{1-1/n_0} .
\end{split}
\end{align}

  We now set $\eta_0=1/576, \eta_1=1/9$ so that $1+\theta-\eta_1=1-1/576=1-\eta_0$ as we recall that $\theta=7/64$. We may also assume that $q_0$ is large enough so that the condition in \eqref{abcond} is valid by the above choice of $\eta_1$ and \eqref{scondition}. We now set $q_1=q^{i_0}_0=q^{i_0/n_0}$ such that $q_1 \leq q^{1/4}$ but $q_1q_0>q^{1/4}$. It follows that $q^{3/4+\eta_1/2}q^{1/2}_1+ q^{3/4+\eta_1}q^{1/4}_1 \ll q^{1-1/576}$ and that $\frac {q^{1+\eta_0/2+\eta_1/2}}{q^{1/4}_1} \leq q^{1+1/1152+1/18-1/16+1/(4n_0)} \leq q^{1-1/576}$ as $n_0 \geq 50$. We then conclude from \eqref{Sasymp}, \eqref{Rdef} and the condition that $\Im (\beta-\alpha)=O(1), \Im(\delta-\gamma)=O(1)$ that the asymptotic formula given in \eqref{4thmomenteval} is valid. This completes the proof of Theorem \ref{twisted_fourth_moment_theorem}.

\vspace*{.5cm}

\noindent{\bf Acknowledgments.} P. G. is supported in part by NSFC grant 12471003 and L. Z. by the FRG Grant PS71536 at the University of New South Wales.

\bibliography{biblio}
\bibliographystyle{amsxport}

\vspace*{.5cm}

\end{document}